\newcommand{\executeiffilenewer}[3]{%
\ifnum\pdfstrcmp{\pdffilemoddate{#1}}%
{\pdffilemoddate{#2}}>0%
{\immediate\write18{#3}}\fi%
}
\newcommand{%
\input{fig/.pdf_tex}%
}[1]{%
\input{fig/#1.pdf_tex}%
}
\def%
\def\svgwidth{#}\input{fig/1.pdf_tex}\end{inkfragenv}%
\def\svgwidth{#1}\input{fig/#2.pdf_tex}\end{inkfragenv}%
\newcommand{\Set}[1]{\left\{ #1 \right\}}
\newcommand{\Bigbar}[1]{\mathrel{\left|\vphantom{#1}\right.\n@space}}
\newcommand{\Setbar}[2]{\Set{#1 \Bigbar{#1 #2} #2}}
\definecolor{blueblack}{rgb}{0,0,.7}
\newcommand{\emphdef}[1]{%
  \textcolor{blueblack}{%
    \textbf{\emph{#1}}%
  }%
}
\newtheorem{theorem}{Theorem}[section]
\newtheorem{lemma}[theorem]{Lemma}
\newtheorem{corollary}[theorem]{Corollary}
\newtheorem{question}[theorem]{Question}
\newtheorem{proposition}[theorem]{Proposition}
\DeclareMathOperator{\vol}{vol}
\newcommand{\sys}{\ensuremath{\text{sys}}}
\newcommand{\Q}{\mathbb{Q}}
\newcommand{\R}{\mathbb{R}}
\newcommand{\SSS}{\mathbb{S}}
\newcommand{\ZZ}{\mathbb{Z}}
\renewcommand{\epsilon}{\varepsilon}
\renewcommand{\phi}{\varphi}
\newcommand\surf{\ensuremath{S}}
\title{Discrete Systolic Inequalities and Decompositions\\ of Triangulated
  Surfaces%
  \thanks{Supported by the French ANR Blanc project ANR-12-BS02-005 (RDAM).
    Preliminary version in \emph{Proceedings of the 30th Annual Symposium
      on Computational Geometry}, 2014.}%
}
\author{%
  \'Eric Colin de Verdi\`ere\thanks{CNRS, D\'epartement d'informatique,
    \'Ecole normale sup\'erieure, Paris, France. Email:
    \protect\url{eric.colin.de.verdiere@ens.fr}.}%
  \and %
  Alfredo Hubard\thanks{Laboratoire de l'Institut Gaspard Monge,
    Universit\'e Paris-Est Marne-la-Vall\'ee.  Email:
    \protect\url{alfredo.hubard@inria.fr}.  Portions of this work were done
    during a post-doctoral visit at the D\'epartement d'informatique of
    \'Ecole normale sup\'erieure, funded by the Fondation Sciences
    Math\'ematiques de Paris.}%
  \and %
  Arnaud de Mesmay\thanks{IST Austria, Vienna.  Email:
    \protect\url{arnaud.de.mesmay@ist.ac.at}.  Portions of this work were done
    as a Ph.D.\ student at the D\'epartement d'informatique of
    \'Ecole normale sup\'erieure.}%
}
\begin{document}
\begin{titlepage}
\maketitle
\begin{abstract}
  How much cutting is needed to simplify the topology of a surface? We
  provide bounds for several instances of this question, for the minimum
  length of topologically non-trivial closed curves, pants decompositions,
  and cut graphs with a given combinatorial map in triangulated
  combinatorial surfaces (or their dual cross-metric counterpart).

  Our work builds upon Riemannian systolic inequalities, which bound the
  minimum length of non-trivial closed curves in terms of the genus and the
  area of the surface. We first describe a systematic way to translate
  Riemannian systolic inequalities to a discrete setting, and
  vice-versa. This implies a conjecture by Przytycka and Przytycki from
  1993, a number of new systolic inequalities in the discrete setting, and
  the fact that a theorem of Hutchinson on the edge-width of triangulated
  surfaces and Gromov's systolic inequality for surfaces are essentially
  equivalent. We also discuss how these proofs generalize to higher
  dimensions.
  
  Then we focus on topological decompositions of surfaces. Relying on ideas
  of Buser, we prove the existence of pants decompositions of length
  $O(g^{3/2}n^{1/2})$ for any triangulated combinatorial surface of genus
  $g$ with $n$ triangles, and describe an $O(gn)$-time algorithm to compute
  such a decomposition.

  Finally, we consider the problem of embedding a cut graph (or more
  generally a cellular graph) with a given combinatorial map on a given
  surface.  Using random triangulations, we prove (essentially) that, for
  any choice of a combinatorial map, there are some surfaces on which any
  cellular embedding with that combinatorial map has length superlinear in
  the number of triangles of the triangulated combinatorial surface.  There
  is also a similar result for graphs embedded on polyhedral triangulations.
\end{abstract}
\end{titlepage}

\section{Introduction}

Shortest curves and graphs with given properties on surfaces have been much
studied in the recent computational topology literature; a lot of effort
has been devoted towards efficient algorithms for finding shortest curves
that simplify the topology of the surface, or shortest topological
decompositions of
surfaces~\cite{eh-ocsd-04,ew-gohhg-05,k-csntc-06,cm-fsnnc-07,ccl-aeweg-12,ew-csec-10,efn-gmcse-12,en-crpse-11}
(refer also to the recent surveys~\cite{c-tags-12,e-cocb-12}).  These
objects provide ``canonical'' simplifications or decompositions of
surfaces, which turn out to be crucial for algorithm design in the case of
surface-embedded graphs, where making the graph planar is
needed~\cite{cen-hfcc-12,cen-mcshc-09,ls-ggcg-10,cce-msspe-13}, as well as
for many purposes in computer graphics and mesh
processing~\cite{gw-tnr-01,lgq-smcpd-09,lm-ndtms-98,pb-stmss-00,whds-retfi-04}.

In this article, we study inequalities that relate the size of a
triangulated surface with the length of such shortest curves and graphs
embedded thereon. The model parameter that we study is the notion of
\emph{edge-width} of an (unweighted) graph embedded on a
surface~\cite{rs-gmsdp-88,ccl-aeweg-12}, that is, the length of a shortest
closed walk in the graph that is non-contractible on the surface (i.e.,
cannot be deformed to a single point on the surface).  In particular we are
interested in the following question: What is the largest possible
edge-width, over all triangulations with $n$~triangles of an orientable
surface of genus~$g$ without boundary? It was known~\cite{h-snceg-88} that
$O(\sqrt{n/g}\log g)$ is an upper bound for the edge-width, and we prove
that this bound is asymptotically tight, namely, that some combinatorial
surfaces of arbitrarily large genus achieve this bound.  We also study
similar questions for other types of curves (non-separating closed curves,
null-homologous but non-contractible closed curves) and for decompositions
(pants decompositions, and cut graphs with a prescribed combinatorial map),
and give an algorithm to compute short pants decompositions.

Most of our results build upon or extend to a discrete setting some known
theorems in \emph{Riemannian systolic geometry}, the archetype of which is
an upper bound on the systole (the length of a shortest non-contractible
closed curve---a continuous version of the edge-width) in terms of the
square root of the area of a Riemannian surface without boundary (or more
generally the $d$th root of the volume of an essential Riemannian
$d$-manifold).  Riemannian systolic geometry~\cite{g-sii-92,k-sgt-07} was
pioneered by Loewner and Pu~\cite{p-sicnr-52}, reaching its maturity with
the deep work of Gromov~\cite{g-frm-83}. In Thurston's words, topology is
naked and it \emph{dresses} with geometric structures; systolic geometry
regards the lengths and areas of all those possible outfits.  Similarly,
endowing a topological surface with a triangulation is a way to ``dress''
it and much of this paper leverages on comparing these two types of
outfits.

We always assume that the surface has \emph{no boundary}, that the
underlying graph of the combinatorial surface is a \emph{triangulation},
and that its edges are \emph{unweighted}; the curves and graphs we seek
remain on the edges of the triangulation.  Lifting any of these
restrictions invalidates or significantly worsens our bounds.  In many
natural situations, such requirements hold, such as in geometric modeling
and computer graphics, where triangular meshes of surfaces without boundary
are typical and, in many cases, the triangles have bounded aspect ratio
(which immediately implies that our bounds apply, the constant in the
$O(\cdot)$ notation depending on the aspect ratio).

After the preliminaries (Section~\ref{S:prelim}), we prove three
independent results (Sections \ref{S:alfredo}--\ref{S:eric}), which are
described and related to other works below.  This paper is organized so as
to showcase the more conceptual results before the more technical ones.
Indeed, the results of Section~\ref{S:alfredo} exemplify the strength of
the connection with Riemannian geometry, while the results in
Sections~\ref{S:arnaud} and~\ref{S:eric} are perhaps a bit more specific,
but feature deeper algorithmic and combinatorial tools.

\paragraph*{Systolic inequalities for closed curves on triangulations.}
Our first result (Section~\ref{S:alfredo}) gives a systematic way of
translating a systolic inequality in the Riemannian case to the case of
triangulations, and vice-versa.  This general result, combined with known
results from systolic geometry, immediately implies bounds on the length of
shortest curves with given topological properties: On a triangulation of
genus~$g$ with $n$~triangles, some non-contractible (resp., non-separating,
resp., null-homologous but non-contractible) closed curve has length
$O(\sqrt{n/g}\log g)$, and, moreover, this bound is best possible.

These upper bounds are new, except for the non-contractible case, which was
proved by Hutchinson~\cite{h-snceg-88} with a worse constant in the
$O(\cdot)$ notation.  The optimality of these inequalities is also
new. Actually, Hutchinson~\cite{h-snceg-88} had conjectured that the
correct upper bound was $O(\sqrt{n/g})$; Przytycka and Przytycki refuted
her conjecture, building, in a series of
papers~\cite{pp-schrt-97,pp-stsnc-93,pp-lbsnc-90}, examples that show a
lower bound of $\Omega(\sqrt{n \log g/g})$.  They conjectured in
1993~\cite{pp-stsnc-93} that the correct bound was $O(\sqrt{n/g}\log g)$;
here, we confirm this conjecture.

In Appendix~\ref{A:hd}, we observe that the proofs of the results mentioned
above extend to higher dimensions.  However, the situation is not quite as
symmetrical as in the two-dimensional case: It turns out that discrete
systolic inequalities in terms of the number of vertices or facets imply continuous
systolic inequalities, and that continuous systolic inequalities imply
discrete systolic inequalities only in terms of the number of facets. This
allows us to derive that a systolic inequality in terms of the number of
facets holds for every triangulation of an essential manifold.

As pointed out to us by a referee, slight variations of the results of
Section~\ref{S:alfredo} and Appendix~\ref{A:hd} were simultaneously and
independently discovered by Ryan Kowalick in his Ph.D.\
thesis~\cite{k-dsi-13}. Our approach in Section~\ref{S:conttodiscr} is
similar to his.  In contrast, we use Voronoi diagrams in
Section~\ref{S:discrtocont}, while he uses a different construction
inspired by Whitney.  We will make some further technical comments on his
work at the end of Appendix~\ref{A:hd}.

\paragraph*{Short pants decompositions.}
A pants decomposition is a set of disjoint simple closed curves that split
the surface into \emph{pairs of pants}, namely, spheres with three boundary
components.  In Section~\ref{S:arnaud}, we focus on the length of the
shortest pants decomposition of a triangulation.  As in all previous works,
we allow several curves of the pants decomposition to run along a given
edge of the triangulation. (Formally, we work in the cross-metric surface
that is dual to the triangulation.)

The problem of computing a shortest pants decomposition has been considered
by several authors~\cite{pt-pdpp-06,e-stssc-09}, and has found satisfactory
solutions (approximation algorithms) only in very special cases, such as
the punctured Euclidean or hyperbolic plane~\cite{e-stssc-09}.  Strikingly,
no hardness result is known; the strong condition that curves have to be
disjoint, and the lack of corresponding algebraic structure, makes the
study of short pants decompositions hard~\cite[Introduction]{gpy-pdrs-11}.
In light of this difficulty, it seems interesting to look for algorithms
that compute short pants decompositions, even without guarantee compared to
the optimum solution. 

Inspired by a result by Buser~\cite[Th.~5.1.4]{b-gscrs-92} on short pants
decompositions on Riemannian surfaces, we prove that every triangulation of
genus~$g$ with $n$~triangles admits a pants decomposition of length
$O(g^{3/2}n^{1/2})$, and we give an $O(gn)$-time algorithm to compute one.
While it is known that pants decompositions of length~$O(gn)$ can be
computed for arbitrary combinatorial
surfaces~\cite[Prop.~7.1]{cl-opdsh-07}, the assumption that the surface is
unweighted and triangulated allows for a strictly better bound in the case
where $g=o(n)$.  (It is always true that $g=O(n)$.)  We remark that the
greedy approach coupled with Hutchinson's bound only gives a bound on the
length of the pants decomposition of the form $f(g).\sqrt{n}$ where $f$ is
superpolynomial~\cite[Introduction]{bps-sldsg-12}.
  
On the lower bound side, some surfaces have no pants decompositions with
length $O(n^{7/6-\varepsilon})$, as proved recently by Guth et
al.~\cite{gpy-pdrs-11} using the probabilistic method. Guth et al. show
that polyhedral surfaces obtained by gluing triangles at random have this
property.

\paragraph*{Shortest embeddings of combinatorial maps.}
Finally, in Section~\ref{S:eric}, we consider the problem of decomposing a
surface using a short cut graph with a prescribed combinatorial map.  A
natural approach to build a homeomorphism between two surfaces is to cut
both of them along a cut graph, and to put the remaining disks in
correspondence. However, for this approach to work, cut graphs defining the
same combinatorial map are needed.

   In this direction, Lazarus et
al.~\cite{lpvv-ccpso-01} proved that every surface has a \emph{canonical
  system of loops} (a specific combinatorial map of a cut graph with one
vertex) with length $O(gn)$, which is worst-case optimal, and gave an
$O(gn)$-time algorithm to compute one.

However, there is no strong reason to focus on canonical systems of loops.
It is fairly natural to expect that other combinatorial maps will always
have shorter embeddings (in particular, by allowing several vertices on the
cut graph instead of just one).  Still, we prove (essentially) that for
any choice of combinatorial map of a cut graph, there exist triangulations
with $n$~triangles on which all embeddings of that combinatorial map have a
\emph{superlinear} length, actually $\Omega(n^{7/6-\varepsilon})$.  (Since
$n$ may be~$O(g)$, there is no contradiction with the result by Lazarus et
al.~\cite{lpvv-ccpso-01}.)  In particular, some edges of the triangulation
are traversed $\Omega(n^{1/6-\varepsilon})$ times.

Our proof uses the probabilistic method in the same spirit as the
aforementioned article of Guth et al.~\cite{gpy-pdrs-11}: We show that
combinatorial surfaces obtained by gluing triangles randomly satisfy this
property asymptotically almost surely, i.e., that the probability of
satisfying this property by a random surface tends to one as the number of
triangles tend to infinity. We remark that beyond the extremal qualities
that concern us, random surfaces and their geometry have been heavily
studied recently~\cite{gm-grrs-02,mm-lcgrrs-11} in connection to quantum
gravity~\cite{ps-tcrts-06} and Belyi surfaces~\cite{bm-rcrs-04}.

Another view of our result is via the following problem: Given two graphs
$G_1$ and $G_2$ cellularly embedded on a surface $S$, is there a
homeomorphism $\phi:S\to S$ such that $G_1$ does not cross the image
of~$G_2$ too many times? Our result essentially says that, if~$G_1$ is
fixed, for most choices of trivalent graphs~$G_2$ with $n$~vertices, for
any~$\varphi$, there will be $\Omega(n^{7/6-\varepsilon})$ crossings
between $G_1$ and~$\varphi(G_2)$. This is related to recent
preprints~\cite{mstw-utsnc-13,ghr-epgm-13}, where upper bounds are proved
for the number of crossings for the same problem, but with sets of disjoint
curves instead of graphs. During their proof, Matou\v{s}ek et
al.~\cite{mstw-utsnc-13} also encountered the following problem (rephrased
here in the language of this paper): For a given genus~$g$, does there
exist a \emph{universal} combinatorial map cutting the surface of genus~$g$
into a genus zero surface (possibly with several boundaries), and with a
linear-length embedding on every such surface?  We answer this question in
the negative for cut graphs.  In Appendix~\ref{A:genuszero}, we prove a
related result for families of closed curves cutting the surface into a
genus zero surface.

\section{Preliminaries}\label{S:prelim}

\subsection{Topology for Graphs on Surfaces}

We only recall the most important notions of topology that we will use, and
refer to Stillwell~\cite{s-ctcgt-80} or Hatcher~\cite{h-at-02} for details.
We denote by $S_{g,b}$ the (orientable) surface of \emphdef{genus} $g$ with
$b$~\emphdef{boundaries}, which is unique up to homeomorphism.  The
surfaces $S_{0,0}$, $S_{0,1}$, $S_{0,2}$, and $S_{0,3}$ are respectively
called the \emphdef{sphere}, the \emphdef{disk}, the \emphdef{annulus}, and
the \emphdef{pair of pants}.  Surfaces are assumed to be connected,
compact, and orientable unless specified otherwise.  The notation $\partial
S$ denotes the boundary of~$S$.

A \emphdef{path}, respectively a \emphdef{closed curve}, on a surface $S$
is a continuous map $p: [0,1] \rightarrow S$, respectively $\gamma:\SSS^1
\rightarrow S$. Paths and closed curves are \emphdef{simple} if they are
one-to-one.  A \emphdef{curve} denotes a path or a closed curve.  We refer
to Hatcher~\cite{h-at-02} for the usual notions of homotopy (continuous
deformation) and homology.  A closed curve is \emphdef{contractible} if it
is null-homotopic, i.e., it cannot be continuously deformed to a point.
A simple closed curve is contractible if and only if it bounds a disk.

All the graphs that we consider in this paper are multigraphs, i.e., loops
are allowed and vertices can be joined by multiple edges.  An
\emphdef{embedding} of a graph~$G$ on a surface~$S$ is, informally, a
crossing-free drawing of~$G$ on~$S$.  A graph embedding is
\emphdef{cellular} if its faces are homeomorphic to open disks.  Euler's
formula states that $v-e+f=2-2g-b$ for any graph with $v$~vertices,
$e$~edges, and $f$~faces cellularly embedded on a surface~$S$ with
genus~$g$ with $b$~boundaries.  A \emphdef{triangulation} of a surface~$S$
is a cellular graph embedding such that every face is a triangle.  A
graph~$G$ cellularly embedded on a surface~$S$ yields naturally a
\emphdef{combinatorial map}~$M$, which stores the combinatorial information
of the embedding~$G$, namely, the cyclic ordering of the edges around each
vertex; we also say that $G$ is an \emphdef{embedding} of~$M$ on~$S$.  Two
graphs cellularly embedded on~$S$ have the same combinatorial map if and only if there
exists a self-homeomorphism of~$S$ mapping one (pointwise) to the other.

A graph~$G$ embedded on a surface~$S$ is a \emphdef{cut graph} if the
surface obtained by cutting~$S$ along~$G$ is a disk.  A \emphdef{pants
  decomposition} of $S$ is a family of disjoint simple closed
curves~$\Gamma$ such that cutting $S$ along all curves in $\Gamma$ gives a
disjoint union of pairs of pants.  Every surface $S_{g,b}$ except the
sphere, the disk, the annulus, and the torus admits a pants decomposition,
with $3g+b-3$ closed curves and $2g+b-2$ pairs of pants.

\subsection{Combinatorial and Cross-Metric Surfaces}

We now briefly recall the notions of combinatorial and cross-metric
surfaces, which define a discrete metric on a surface; see Colin de
Verdi\`ere and Erickson~\cite{ce-tnpcs-10} for more details.  In this
paper, all edges of the combinatorial and cross-metric surfaces are
unweighted.

A \emphdef{combinatorial surface} is a surface $S$ together with an
embedded graph $G$, which will always be a triangulation in this
article. In this model, the only allowed curves are walks in~$G$, and the
length of a curve~$c$, denoted by $|c|_G$, is the number of edges of $G$
traversed by~$c$, counted with multiplicity.

However, it is often convenient (Sections~\ref{S:arnaud} and~\ref{S:eric})
to allow several curves to traverse a same edge of~$G$, while viewing them
as being disjoint (implicitly, by ``spreading them apart'' infinitesimally
on the surface).  This is formalized using the dual concept of
\emphdef{cross-metric surface}: Instead of curves in~$G$, we consider
curves in \emphdef{regular} position with respect to the dual graph~$G^*$,
namely, that intersect the edges of~$G^*$ transversely and away from the
vertices; the length of a curve~$c$, denoted by~$|c|_{G^*}$, is the number
of edges of~$G^*$ that $c$~crosses, counted with multiplicity.  Since, in
this article, $G$~is always a triangulation, $G^*$ is always
\emph{trivalent}, i.e., all its vertices have degree three.  Thus, a
cross-metric surface is a surface~$S$ equipped with a cellular, trivalent
graph (usually denoted by~$G^*$).

We note that the previous definition of cross-metric surface is valid also
in the case where the surface has non-empty boundary (see Colin de
Verdi\`ere and Erickson~\cite[Section~1.2]{ce-tnpcs-10} for more details).
Curves and graph embedded on cross-metric surfaces can be manipulated
efficiently~\cite{ce-tnpcs-10}.  The different notions of systoles are
easily translated for both combinatorial and cross-metric surfaces.

Once again, we emphasize that, in this paper, unless otherwise noted,
\emph{\bf all combinatorial surfaces are triangulated (each face is a disk
  with three sides) and unweighted (each edge has weight one)}.  Dually,
\emph{\bf all cross-metric surfaces are trivalent (each vertex has degree
  three) and unweighted (each edge has crossing weight one)}.

\subsection{Riemannian Surfaces and Systolic Geometry}
We will use some notions of Riemannian geometry, referring the interested
reader to standard textbooks~\cite{c-rg-92,k-rg-95}.  A \emphdef{Riemannian
  surface}~$(S,m)$ is a surface~$S$ equipped with a metric~$m$, defined by
a scalar product on the tangent space of every point.  For example, smooth
surfaces embedded in some Euclidean space~$\R^d$ are naturally Riemannian
surfaces---conversely, every Riemannian surface can be isometrically
embedded in some~$\R^d$~\cite{hh-ierme-06} but we will not need this fact.
The length of a (rectifiable) curve~$c$ is denoted by~$|c|_m$.  The
\emphdef{Gaussian curvature}~\emphdef{$\bm{\kappa_p}$} of~$S$ at a
point~$p$ is the product of the eigenvalues of the scalar product at~$p$.
By the Bertrand--Diquet--Puiseux theorem~\cite[Chapter~3,
Prop.~11]{spi-dg-99}, the area of the ball~\emphdef{$\bm{B(p,r)}$} of
radius~$r$ centered at~$p$ equals $\pi r^2-\kappa_p\pi r^4+o(r^4)$. %
We now collect the results from systolic geometry that we will use; for a
general presentation of the field, see, e.g., Gromov~\cite{g-sii-92} or
Katz~\cite{k-sgt-07}.
  \begin{theorem}[{\cite{g-frm-83,ks-esesa-04,g-sii-92,s-absss-08,bs-pmrsl-94}}]\label{sys}
    There are constants $c,c',c'',c'''>0$ such that, on any Riemannian
    surface without boundary, with genus~$g$ and area~$A$:
  \begin{enumerate}
  \item some non-contractible closed curve has length at most $c\sqrt{A/g}
    \log g$;
  \item some non-separating closed curve has length at most
    $c'\sqrt{A/g}\log g$;
  \item some null-homologous non-contractible closed curve has length at
    most $c''\sqrt{A/g}\log g$.
  \end{enumerate}
  Furthermore,
  \begin{enumerate}
  \item[4.] for an infinite number of values of $g$, there exist Riemannian
    surfaces of constant curvature~$-1$ (hence area $A=4\pi(g-1)$) and
    systole larger than $\frac{2}{3\sqrt{\pi}}\sqrt{A/g}\log g-c'''$.  In
    particular, the three previous inequalities are tight up to constant
    factors.
  \end{enumerate}
\end{theorem}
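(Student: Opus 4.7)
The statement collects four results from the systolic geometry literature, so my plan is to outline the key idea behind each rather than re-derive everything from scratch. For the upper bound in part (1), I would follow Gromov's approach via a \emph{covering/packing argument}: suppose the non-contractible systole is $L$. Then at every point $p$, every metric ball $B(p,r)$ with $r<L/2$ is contained in the universal cover (since no short non-contractible loop fits inside), so by the Bertrand--Diquet--Puiseux expansion together with Gauss--Bonnet one obtains a lower bound on $\mathrm{area}(B(p,r))$ in terms of $r$. Covering $S$ by balls of a cleverly chosen radius just below $L/2$ and comparing total area with $A$ gives $L=O(\sqrt{A})$, which is Gromov's bound without the $g$-dependence. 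To refine this to $O(\sqrt{A/g}\,\log g)$ one uses the improvement by Katz--Sabourau: one argues that most points lie on many homotopically distinct short loops, so the balls actually contain area growing like $r^2$ times a factor proportional to $\log g$, squeezing more homotopy classes into the area budget.

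For part (2), I would reduce to part (1) inductively. Starting from a non-contractible systolic loop $\gamma$, if $\gamma$ is non-separating we are done. Otherwise cut along $\gamma$, obtaining a lower-genus surface (or two components) of smaller genus, on which one re-applies the non-contractible bound; since genus strictly drops, after $O(1)$ iterations at least one short loop produced is non-separating. Care must be taken that the genus-to-area ratio does not deteriorate too much during the cuts, which is the content of the Sabourau argument. For part (3), the null-homologous non-contractible case is more delicate and follows the Babenko--Sabourau style construction: one builds a short null-homologous cycle from a careful combination of short loops whose homology classes span a sublattice in $H_1(S;\mathbb{Z})$, using the pigeonhole principle on the representatives produced by part (2) together with a length-additive cancellation of homology classes.

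For the lower bound in part (4), my plan is to exhibit the Buser--Sarnak family of arithmetic hyperbolic surfaces. Take $\mathrm{PSL}_2(\mathbb{R})/\Gamma(N)$ where $\Gamma(N)$ is a principal congruence subgroup of an arithmetic lattice in a quaternion algebra; these surfaces have constant curvature $-1$, hence by Gauss--Bonnet area $A=4\pi(g-1)$, and the standard trace-based lower bound on the displacement of hyperbolic elements in a congruence subgroup forces the systole to grow like $\tfrac{4}{3}\log(\mathrm{genus})$, which after substituting $A$ gives the claimed $\tfrac{2}{3\sqrt{\pi}}\sqrt{A/g}\log g - c'''$ bound. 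Combined with parts (1)--(3), since any non-separating or null-homologous non-contractible curve is in particular non-contractible, the same surfaces witness tightness for all three upper bounds.

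The main obstacle in a self-contained proof is the sharp constant and the $\log g$ factor in part (1): the naive packing argument loses the logarithm, and recovering it requires the Katz--Sabourau refinement, which is technically the deepest ingredient. Everything else is either a reduction to part (1) or a citation of the Buser--Sarnak construction, which I would invoke as a black box.
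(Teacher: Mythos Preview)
The paper does not prove this theorem at all: it is stated as a compilation of results from the literature, and the only ``proof'' the paper gives is a one-paragraph attribution immediately following the statement --- (1) and~(2) to Gromov, (3) to Sabourau, (4) to Buser--Sarnak, with Katz--Sabourau credited only for an asymptotic improvement of the \emph{constant} in~(1). So there is no proof to compare your sketch against, only attributions, and on those your sketch is partly off.

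The most concrete error is your account of where the $\log g$ factor comes from. You write that ``the naive packing argument loses the logarithm, and recovering it requires the Katz--Sabourau refinement.'' This is backwards: Gromov's original argument already yields $c\sqrt{A/g}\,\log g$ with $c=2/\sqrt{3}$; the Katz--Sabourau contribution is to sharpen the constant to any $c>1/\sqrt{\pi}$ for large~$g$, not to produce the logarithm. Relatedly, your mechanism for the $\log g$ (``balls contain area growing like $r^2$ times a factor proportional to $\log g$'') is not what either proof does.

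Your reductions for (2) and~(3) are also shaky. For~(2), cutting along a separating systolic loop produces surfaces with boundary, the genus drop can be as slow as one per step (so ``after $O(1)$ iterations'' is false), and you give no control on how the $A/g$ ratio behaves under cutting; in any case the paper attributes~(2) to Gromov, not to a Sabourau-style cutting argument. For~(3), the result is Sabourau's (not ``Babenko--Sabourau''), and the actual proof is not a pigeonhole on homology classes of short loops; that sketch, as stated, does not produce a short null-homologous non-contractible curve. Your description of~(4) via Buser--Sarnak congruence covers is essentially the right picture and matches the paper's attribution.
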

In this theorem, (1) and (2) are due to Gromov~\cite{g-frm-83,g-sii-92}, (3) is
due to Sabourau~\cite{s-absss-08}, and (4) is due to Buser and
Sarnak~\cite[p.~45]{bs-pmrsl-94}.
Furthermore, Gromov's proof yields $c=2/\sqrt{3}$ in (1), which has been
improved asymptotically by Katz and Sabourau~\cite{ks-esesa-04}: They show
that for every $c>1/\sqrt{\pi}$ there exists some integer~$g_c$ so that (1) is valid for every $g\geq g_c$.  

\section{A Two-Way Street}\label{S:alfredo}

In this section, we prove that any systolic inequality regarding closed
curves in the continuous (Riemannian) setting can be converted to the
discrete (triangulated) setting, and vice-versa.

\subsection{From Continuous to Discrete Systolic
  Inequalities}\label{S:conttodiscr}
\begin{theorem}\label{T:conttodiscr}
  Let $(S,G)$ be a triangulated combinatorial surface of genus~$g$, without
  boundary, with $n$~triangles.  Let $\delta>0$ be arbitrarily small.
  There exists a Riemannian metric~$m$ on~$S$ with area~$n$ such that for
  every closed curve $\gamma$ in $(S,m)$ there exists a homotopic closed
  curve $\gamma'$ on~$(S,G)$ with $|\gamma'|_G\leq
  (1+\delta)\sqrt[4]{3}\ |\gamma|_m$.
\end{theorem}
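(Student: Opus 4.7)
The plan is to equip $S$ with the piecewise-Euclidean metric $m_0$ in which every triangle of $G$ is a Euclidean equilateral triangle of area $1$, so the total area is exactly $n$ and every edge has length $s=2/\sqrt[4]{3}$. This metric has cone singularities at the vertices of $G$; I would smooth them radially on arbitrarily small disks and then rescale, obtaining a genuine Riemannian metric $m$ of area~$n$ whose length functional differs from that of $m_0$ by a multiplicative factor at most $1+\delta/2$ on rectifiable curves. Thus it suffices to prove the analogous inequality in $(S,m_0)$: given a curve $\gamma$ in $(S,m_0)$ (perturbed, at arbitrarily small length cost, to be transverse to the edges of $G$ and to avoid its vertices), produce a homotopic combinatorial curve $\gamma'$ with $|\gamma'|_G\le\sqrt[4]{3}\,|\gamma|_{m_0}$.

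The construction of $\gamma'$ is by rectification to the $1$-skeleton. The crossings of $\gamma$ with the edges of $G$ break $\gamma$ into arcs, each inside a single triangle. If such an arc enters and exits through two distinct edges $e,e'$ sharing a vertex $v$, I replace it by the two-segment boundary path $p\to v\to p'$ through $v$; if it enters and exits through the same edge, I replace it by the straight sub-segment of that edge between the two crossings. Concatenating all rectified arcs yields a closed curve $\gamma'$ lying on the $1$-skeleton of $G$, homotopic to $\gamma$; as we trace $\gamma'$ we visit a sequence of chosen vertices $v_1,v_2,\ldots$ in order, with consecutive $v_i,v_{i+1}$ lying on a common edge of $G$.

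The crux is the following sharp local inequality in a Euclidean equilateral triangle: for any two points $p,p'$ on two edges meeting at a vertex $v$,
\[
|pv|+|vp'| \le 2\,|pp'|,
\]
which, writing $t=|pv|$ and $u=|vp'|$, is equivalent to $4(t^2-tu+u^2)-(t+u)^2 = 3(t-u)^2 \ge 0$. Summing this arc by arc (the same-edge case is trivial) yields that the total Euclidean length of the rectified curve is at most $2\,|\gamma|_{m_0}$. Now, each time $v_i\ne v_{i+1}$ the walk traverses one full edge of $G$ of length exactly $s$; each time $v_i=v_{i+1}$ the rectified piece is a backtrack along a single edge, which is null-homotopic and can be erased from the combinatorial walk at no cost to its homotopy class. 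Same-edge arcs similarly contribute $0$ to $|\gamma'|_G$ and nonnegatively to the rectified length. Hence every edge counted by $|\gamma'|_G$ is charged at least $s$ in the rectified length, and
\[
|\gamma'|_G \le \frac{2\,|\gamma|_{m_0}}{s} = \sqrt[4]{3}\,|\gamma|_{m_0} \le (1+\delta)\sqrt[4]{3}\,|\gamma|_m.
\]
The main obstacle is the coordination of the various small perturbations (smoothing of the cone singularities, transversality of $\gamma$ with $G$, deletion of null-homotopic backtracks) so that they compose into the single multiplicative slack $(1+\delta)$; the sharp constant $\sqrt[4]{3}$ itself falls out cleanly from the elementary local inequality above.
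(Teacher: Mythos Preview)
Your proposal is correct and follows essentially the same strategy as the paper: equip each triangle with the flat equilateral metric of area~$1$, push an arbitrary curve to the $1$-skeleton with a factor-$2$ length increase, convert to a combinatorial walk, and smooth the cone points at the end. The one substantive variation is in the rectification step: the paper replaces each arc by the \emph{shorter} of the two boundary paths between its endpoints and asserts the factor~$2$ as ``elementary geometry'', whereas you always route through the common vertex of the two crossed edges and supply the explicit law-of-cosines computation $(t+u)^2\le 4(t^2-tu+u^2)$. Your choice makes the bookkeeping from rectified curve to combinatorial walk cleaner (consecutive chosen vertices are automatically endpoints of a common edge). One small overstatement: you say the smoothed metric differs from $m_0$ multiplicatively by at most $1+\delta/2$ in both directions, but the smoothing the paper carries out (and that a radial smoothing would give) only guarantees $|\gamma|_{m_0}\le(1+\delta)|\gamma|_m$; fortunately that is the only direction you actually use.
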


This theorem, combined with known theorems from systolic geometry,
immediately implies:
\begin{corollary}\label{C:newsystolic}
  Let $(S,G)$ be a triangulated combinatorial surface with genus~$g$ and
  $n$~triangles, without boundary.  Then, for some absolute constants~$c$,
  $c'$, and~$c''$:
  \begin{enumerate}
  \item some non-contractible closed curve has length at most
    $c\sqrt{n/g}\log g$;
  \item some non-separating closed curve has length at most
    $c'\sqrt{n/g}\log g$;
  \item some homologically trivial non-contractible closed curve has length
    at most $c''\sqrt{n/g}\log g$.
  \end{enumerate}
\end{corollary}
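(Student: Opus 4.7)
The plan is to chain Theorem~\ref{T:conttodiscr} with the three parts of Theorem~\ref{sys}. First I fix any $\delta>0$ (say $\delta=1$) and apply Theorem~\ref{T:conttodiscr} to the given triangulated surface $(S,G)$, producing a Riemannian metric $m$ on $S$ of area exactly $n$ together with the approximation guarantee: every closed curve $\gamma$ in $(S,m)$ is freely homotopic to a closed curve $\gamma'$ on $(S,G)$ of length $|\gamma'|_G\leq(1+\delta)\sqrt[4]{3}\,|\gamma|_m$.

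Next, since $(S,m)$ has genus $g$ and area $A=n$, parts (1)--(3) of Theorem~\ref{sys} supply three closed curves $\gamma_1,\gamma_2,\gamma_3$ in $(S,m)$ of Riemannian length at most a constant times $\sqrt{n/g}\log g$, which are respectively non-contractible, non-separating, and homologically trivial but non-contractible. For each $i$, I take the homotopic discrete curve $\gamma_i'$ furnished by Theorem~\ref{T:conttodiscr}; its length is also bounded by a constant times $\sqrt{n/g}\log g$, after absorbing the factor $(1+\delta)\sqrt[4]{3}$ into the output constants $c$, $c'$, $c''$.

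The only thing left is to check that each $\gamma_i'$ has the same topological type as $\gamma_i$, which follows from invariance of the relevant properties under free homotopy. Contractibility is a free-homotopy invariant, so $\gamma_1'$ is non-contractible. Being non-separating is equivalent to having a non-zero $\mathbb{Z}_2$-homology class; since freely homotopic curves are homologous, $\gamma_2'$ is non-separating. For $\gamma_3'$, homology invariance of free homotopy gives $[\gamma_3']=[\gamma_3]=0$, while homotopy invariance of contractibility gives that $\gamma_3'$ is still non-contractible.

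I do not anticipate any genuine obstacle: the corollary is an immediate combination of the discretization theorem of this subsection and the continuous systolic bounds recalled in Section~\ref{S:prelim}, with all the work concentrated in Theorem~\ref{T:conttodiscr}. The only care needed is to track which invariants (contractibility, $\mathbb{Z}_2$-homology class, integer homology class) are preserved under the homotopy produced by Theorem~\ref{T:conttodiscr}, and this is standard.
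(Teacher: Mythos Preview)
Your proposal is correct and matches the paper's proof essentially line for line: apply Theorem~\ref{T:conttodiscr} to get a Riemannian metric of area~$n$, invoke Theorem~\ref{sys}(1)--(3) to obtain short curves of the three types, and pull each back to a homotopic walk in~$G$ via the length bound. The paper leaves the preservation of the three topological properties under free homotopy implicit, whereas you spell it out; that extra care is appropriate and does not constitute a different approach.
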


\begin{proof}[Proof of Corollary~\ref{C:newsystolic}]
  The proof consists in applying Theorem~\ref{T:conttodiscr} to $(S,G)$,
  obtaining a Riemannian metric~$m$.  For each of the different cases, the
  appropriate Riemannian systolic inequality is known, which means that a
  short curve~$\gamma$ of the given type exists on~$(S,m)$
  (Theorem~\ref{sys}(1--3)); by Theorem~\ref{T:conttodiscr}, there exists a
  homotopic curve~$\gamma'$ in~$(S,G)$ such that
  $|\gamma'|_G\le(1+\delta)\sqrt[4]3\ |\gamma|_m$, for any~$\delta>0$.
\end{proof}

Plugging in the best known constants for Theorem~\ref{sys}~(1) allows us to take
$c=2 / \sqrt[4]{3}$, or any $c> \sqrt[4]{3/\pi^2}$ asymptotically using the
refinement of Katz and Sabourau.

Furthermore, we note that, by Euler's formula and double-counting, we have
$n=2v+4g-4$, where $v$ is the number of vertices of~$G$.  Thus, on a
triangulated combinatorial surface with $v\ge g$ vertices, the length of a
shortest non-contractible closed curve is at most
$2\sqrt{2}\sqrt[4]{3}\cdot \sqrt{v/g}\log g < 3.73 \sqrt{v/g}\log g$. This
reproves a theorem of Hutchinson~\cite{h-snceg-88}, except that her proof
technique leads to the weaker constant~$25.27$. This constant can be
improved asymptotically to $\sqrt[4]{108/\pi^2}<1.82$ with the
aforementioned refinement.

\begin{figure}[htb]
\centering
\def\svgwidth{\linewidth}
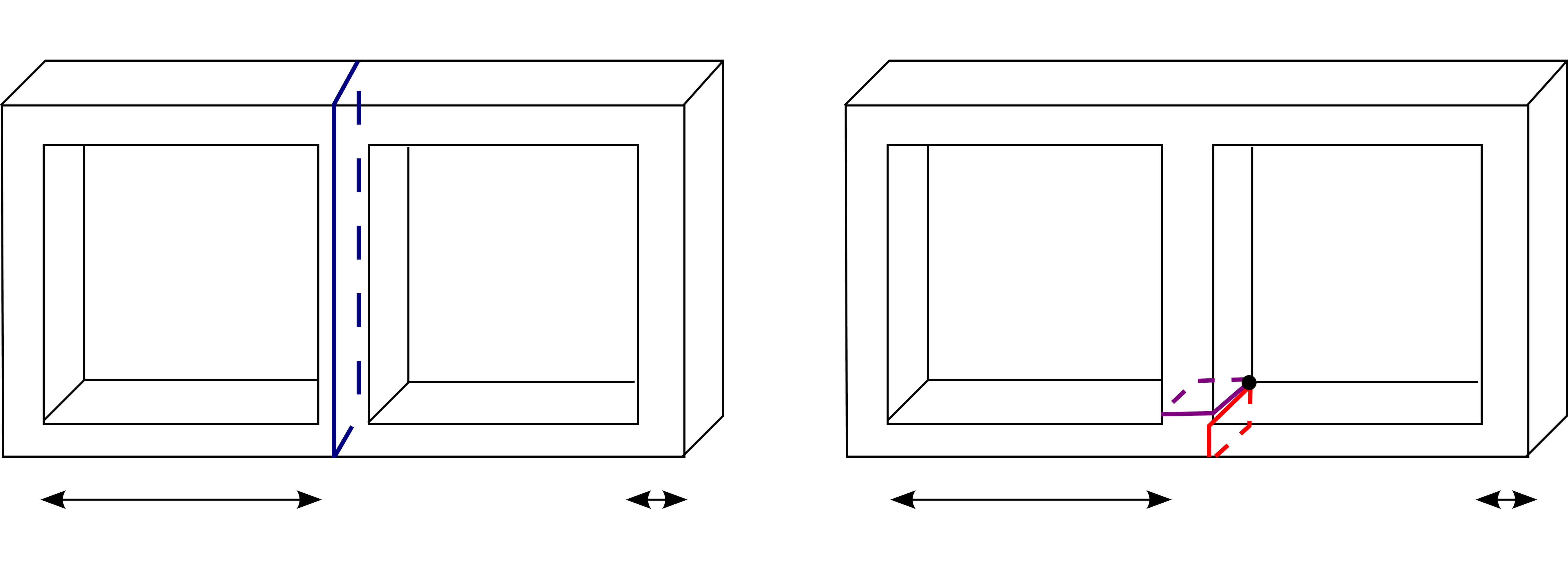%

\caption{A piecewise linear double torus with area~$A$ such that the length
  of a shortest splitting closed curve is $\Omega(A)$ (left), but the
  length of a shortest homologically trivial non-contractible curve,
  concatenation of $\alpha \beta \alpha^{-1} \beta^{-1}$, has length
  $\Theta(1)$.}
\label{fig:counterex}
\end{figure}

We also remark that, in (3), we cannot obtain a similar bound if we require
the curve to be simple (and therefore to be
\emph{splitting}~\cite{ccelw-scsh-08}). Indeed, Figure~\ref{fig:counterex}
shows that the minimum length of a shortest homologically trivial,
non-contractible closed curve can become much larger if we additionally
request the curve to be simple.

\begin{proof}[Proof of Theorem~\ref{T:conttodiscr}]
  We first recall that every surface has a unique structure as a smooth
  manifold, up to diffeomorphism, and we can therefore assume in the
  following that $S$ is a smooth surface.

  The first part of the proof is similar to Guth et
  al.~\cite[Lemma~5]{gpy-pdrs-11}.  Define $m_G$ to be the singular
  Riemannian metric given by endowing each triangle of~$G$ with the
  geometry of a Euclidean equilateral triangle of area~1 (and thus side
  length $2/\sqrt[4]{3}$): This is a genuine Riemannian metric except at a
  finite number of points, the set of vertices of~$G$.  The graph~$G$ is
  embedded on~$(S,m_G)$. Let $\gamma$ be a closed curve $\gamma\colon
  \SSS^1 \to S$. Up to making it longer by a factor at most
  $\sqrt{1+\delta}$, we may assume that $\gamma$ is piecewise linear and
  transversal to~$G$.  Now, for each triangle $T$ and for every maximal
  part~$p$ of~$\gamma$ that corresponds to a connected component
  of~$\gamma^{-1}(T)$, we do the following.  Let $x_0$ and~$x_1$ be the
  endpoints of~$p$ on the boundary of~$T$.  (If~$\gamma$ does not cross any
  of the edges of~$G$, then it is contractible and the statement of the
  theorem is trivial.)  There are two paths on the boundary of~$T$ with
  endpoints $x_0$ and~$x_1$; we replace~$p$ with the shorter of these two
  paths.  Since $T$~is Euclidean and equilateral, elementary geometry shows
  that these replacements at most doubled the lengths of the curve.  Now,
  the new curve lies on the graph~$G$.  We transform it with a homotopy
  into a no longer curve that is an actual closed walk in~$G$, by
  simplifying it each time it backtracks.  Finally, from a closed
  curve~$\gamma$, we obtained a homotopic curve~$\gamma'$ that is a walk
  in~$G$, satisfying $|\gamma'|_G=\sqrt[4]{3}/2\ |\gamma'|_{m_G}\le
  \sqrt{1+\delta}\sqrt[4]3\ |\gamma|_{m_G}$.

  The metric~$m_G$ satisfies our conclusion, except that it has isolated
  singularities.  For the sake of concision we defer the smoothing
  procedure to Lemma~\ref{L:smooth}. This lemma allows us to smooth and
  scale~$m_G$ to obtain a metric~$m$, also with area~$n$, that multiplies
  the length of all curves by at least~$1/\sqrt{1+\delta}$ compared
  to~$m_G$.  This metric satisfies the desired properties.
  \end{proof}

There remains to explain how to smooth the metric, which is done using
partitions of unity.

\begin{lemma}\label{L:smooth}
  With the notations of the proof of Theorem~\ref{T:conttodiscr}, there
  exists a smooth Riemannian metric~$m$ on~$S$, also with area~$n$, such
  that any closed curve~$\gamma$ in~$S$ satisfies
  $|\gamma|_{m}\ge|\gamma|_{m_G}/\sqrt{1+\delta}$.
\end{lemma}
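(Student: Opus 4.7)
My plan is to modify $m_G$ only in disjoint small disks around its singular set, which consists of the finitely many vertices of~$G$, using a partition-of-unity construction. Around each vertex~$v$, I would pick a coordinate disk $D_v$ in the smooth structure of~$S$, with the $D_v$'s pairwise disjoint. Outside $\bigcup_v D_v$, $m_G$ is already smooth, so I would set $m = m_G$ there. On each $D_v$, I would choose a smooth bump function $\phi_v\colon D_v \to [0,1]$ equal to~$1$ in a smaller neighborhood of~$v$ and equal to~$0$ near $\partial D_v$, together with a smooth reference metric $\tilde m_v$ on the closed disk, and define
\[
  m := \phi_v \cdot \tilde m_v + (1-\phi_v) \cdot m_G
\]
on each $D_v$. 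Since positive-definite symmetric bilinear forms form a convex set, this is a Riemannian metric on $D_v \setminus \{v\}$; moreover it extends smoothly across~$v$, because the summand $(1-\phi_v) m_G$ vanishes identically near~$v$, where $m$ thus coincides with the smooth metric $\tilde m_v$.

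For the length bound, I would argue in two regions. On the annular transition region $A_v = \{0 < \phi_v < 1\}$, the metric $m_G$ is already smooth (we are away from the apex), so by choosing $\tilde m_v$ to agree with~$m_G$ on a neighborhood of~$A_v$ (possible by smoothly extending $m_G|_{A_v}$ into~$D_v$), the combination $m$ coincides with $m_G$ on~$A_v$, giving lengths that match exactly there. On the inner region $I_v = \{\phi_v = 1\}$, where $m = \tilde m_v$ and pointwise comparison fails (the cone metric blows up in smooth coordinates whenever the cone angle is less than~$2\pi$), I would shrink $I_v$ and rescale $\tilde m_v$ so that both the $m_G$-area and the $\tilde m_v$-diameter of~$I_v$ are negligible. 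For any closed curve~$\gamma$, its segments inside~$I_v$ can then be rerouted along $\partial I_v$ with negligible change in both $m_G$- and $m$-length, at a cost proportional to the number of intersections, which can be absorbed into the multiplicative factor~$\sqrt{1+\delta}$.

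After this construction the total area of $(S,m)$ differs from~$n$ by an amount controlled by the disk sizes; a final global rescaling by a factor arbitrarily close to~$1$ restores the area exactly, at the cost of a further multiplicative length distortion arbitrarily close to~$1$. Combining the two contributions gives the desired inequality $|\gamma|_m \ge |\gamma|_{m_G}/\sqrt{1+\delta}$. The main obstacle is precisely the blow-up of~$m_G$ near cone apices with cone angle less than~$2\pi$, which prevents any pointwise metric dominance in~$I_v$; my strategy bypasses this by making~$I_v$ sufficiently small and exploiting the finiteness of the singular set together with the fact that rerouting a curve along~$\partial I_v$ changes its length only by a controlled amount.
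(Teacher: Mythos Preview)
Your proposal has a genuine gap rooted in a misconception about the singular metric~$m_G$. You claim that ``the cone metric blows up in smooth coordinates whenever the cone angle is less than~$2\pi$,'' and that this prevents any pointwise metric dominance in~$I_v$. This is false. In polar coordinates $(\rho,\psi)$ around a vertex with cone angle~$\theta$, the cone metric reads $d\rho^2 + c^2\rho^2\,d\psi^2$ with $c=\theta/(2\pi)$; in smooth Cartesian coordinates the metric coefficients are bounded between $\min(1,c^2)$ and $\max(1,c^2)$. The metric is discontinuous at the apex but \emph{bounded}, for any cone angle. In particular, one can always choose a smooth metric on a neighborhood of~$v$ that dominates~$m_G$ pointwise.

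This matters because your rerouting strategy cannot establish the lemma as stated. The inequality $|\gamma|_m \ge |\gamma|_{m_G}/\sqrt{1+\delta}$ must hold for \emph{every} closed curve~$\gamma$, not for some homotopic representative. If on~$I_v$ the smooth metric $m=\tilde m_v$ fails to dominate~$m_G$ (even up to a factor close to~$1$), then a curve that winds many times inside~$I_v$ can have $|\gamma|_{m_G}$ arbitrarily large while $|\gamma|_m$ stays bounded, and the inequality fails. Your proposed control, ``cost proportional to the number of intersections,'' does not help: that number is unbounded for arbitrary curves and cannot be absorbed into a fixed multiplicative factor. Making the $\tilde m_v$-diameter of~$I_v$ small, as you suggest, goes in exactly the wrong direction---it makes~$m$ smaller on~$I_v$, worsening the comparison.

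The paper's proof takes the direct route you believed was blocked: for each vertex~$v$, it picks a smooth metric~$m_v$ on a small ball that pointwise dominates~$m_G$ there (possible precisely because $m_G$ is bounded), so that the partition-of-unity interpolation~$\hat m$ satisfies $|\gamma|_{\hat m}\ge|\gamma|_{m_G}$ for all~$\gamma$ automatically. The area excess is controlled by shrinking the balls, and a final global rescaling produces the factor~$\sqrt{1+\delta}$.
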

\begin{proof}
  The idea is to smooth out each vertex~$v$ of~$G$ to make $m_G$
  Riemannian, as follows.  Recall that $\delta>0$ is fixed; $\varepsilon>0$
  will be determined later.

  On the open ball $B(v,2\varepsilon)$, consider a Riemannian metric~$m_v$
  such that (i) $m_v$ has area at most~$\delta/3$, and (ii) any path in
  that ball is longer under~$m_v$ than under~$m_G$.  This is certainly
  possible provided $\varepsilon$ is small enough: For example, take any
  diffeomorphism from $B(v,1/2)$ onto the open unit disk~$D$ in the
  plane; define a metric on~$B(v,1/2)$ by taking the pullback metric of
  a multiple~$\lambda$ of the Euclidean metric on~$D$, where $\lambda$ is
  chosen large enough so that this pullback metric is larger than~$m_G$
  (and thus (ii) is satisfied).  If we take $\varepsilon>0$ small enough,
  the restriction of this pullback metric to~$B(v,2\varepsilon)$ also
  satisfies~(i).

  We now use a partition of unity to define a smooth metric $\hat m$ that
  interpolates between~$m_G$ and the metrics~$m_v$. By choosing an
  appropriate open cover, and therefore an appropriate partition of unity
  $\rho$, we obtain a metric $\hat m= \rho_G m_G + \sum_{v \in V} \rho_v
  m_v$ such that:
  \begin{itemize}
  \item outside the balls centered at a vertex $v$  of radius
    $2\varepsilon$, we have $\hat m=m_G$;
  \item inside a ball $B(v,\varepsilon)$, we have $\hat m=m_v$;
  \item in $B(v,2\varepsilon)\setminus B(v,\varepsilon)$, the metric $\hat
    m$ is a convex combination of $m_G$ and~$m_v$.
  \end{itemize}
  The area of~$\hat m$ is at most the sum of the areas of~$m_G$ and
  the~$m_v$'s, which is at most $n(1+\delta)$.  Moreover, for any
  curve~$\gamma$, we have $|\gamma|_{\hat m}\ge|\gamma|_{m_G}$.

  Finally, we scale $\hat m$ to obtain the desired metric~$m$ with
  area~$n$; for any curve~$\gamma$, we indeed have
  $|\gamma|_m\ge|\gamma|_{\hat m}/\sqrt{1+\delta}$.
\end{proof}

\subsection{From Discrete to Continuous Systolic
  Inequalities}\label{S:discrtocont}

Here we prove that, conversely, discrete systolic inequalities imply their
Riemannian analogs.  The idea is to approximate a Riemannian surface by the
Delaunay triangulation of a dense set of points, and to use some recent
results on intrinsic Voronoi diagrams on surfaces~\cite{dzm-ssivd-08}.
\begin{theorem}\label{T:discrtocont}
  Let $(S,m)$ be a Riemannian surface of genus~$g$ without boundary, of
  area~$A$. Let $\delta>0$.  For infinitely many values of~$n$, there
  exists a triangulated combinatorial surface~$(S,G)$ embedded on~$S$ with
  $n$~triangles, such that every closed curve~$\gamma$ in~$(S,G)$ satisfies
  $|\gamma|_m \leq (1+\delta){\sqrt{\frac {32}\pi}}\sqrt{A/n}\ |\gamma|_G$.
\end{theorem}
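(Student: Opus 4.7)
The plan is to realize the desired triangulation as an intrinsic Delaunay triangulation of a very dense, well-spread finite point set on $(S,m)$, so that Delaunay edges are short geodesics. Concretely, for each small $\varepsilon>0$ I would pick a maximal $\varepsilon$-separated subset $P_\varepsilon\subset S$ in the intrinsic metric: no two points of $P_\varepsilon$ are at distance less than $\varepsilon$, and by maximality every point of $S$ is within distance $\varepsilon$ of some point of $P_\varepsilon$. The open balls of radius $\varepsilon/2$ around points of $P_\varepsilon$ are disjoint; since each such ball has area $(\pi/4)\varepsilon^2(1+o(1))$ as $\varepsilon\to 0$ (by the Bertrand--Diquet--Puiseux expansion), we get $v_\varepsilon:=|P_\varepsilon|\le (1+o(1))\,4A/(\pi\varepsilon^2)$, equivalently $\varepsilon\le (1+o(1))\,2\sqrt{A/(\pi v_\varepsilon)}$.

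Next I would invoke the intrinsic Voronoi/Delaunay machinery on Riemannian surfaces, in the form proved by Dyer, Zhang and M\"oller~\cite{dzm-ssivd-08}: if the covering radius is sufficiently small compared to the injectivity radius and curvature scale of $(S,m)$, the intrinsic Voronoi diagram of $P_\varepsilon$ is a regular cell complex whose dual, the intrinsic Delaunay graph~$G=G_\varepsilon$, is a genuine triangulation of~$S$ embedded by geodesic arcs. Since all of $(S,m)$ is fixed and $\varepsilon\to 0$, this hypothesis holds for all small enough~$\varepsilon$. For such~$\varepsilon$, Euler's formula gives $n_\varepsilon=2v_\varepsilon+4g-4=(2+o(1))v_\varepsilon$, providing the infinite sequence of values of~$n$ promised by the theorem.

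The length bound now follows from the Delaunay property. Each Delaunay edge $pq$ is a side of some Delaunay triangle whose open circumdisk contains no point of $P_\varepsilon$; equivalently, the corresponding Voronoi vertex is at intrinsic distance at most the covering radius~$\varepsilon$ from $p,q$, so $d_m(p,q)\le 2\varepsilon$. Consequently, for any closed walk $\gamma$ in~$(S,G_\varepsilon)$,
\[
|\gamma|_m\;\le\;2\varepsilon\,|\gamma|_{G_\varepsilon}\;\le\;(1+o(1))\,4\sqrt{A/(\pi v_\varepsilon)}\;|\gamma|_{G_\varepsilon}\;=\;(1+o(1))\sqrt{32A/(\pi n_\varepsilon)}\;|\gamma|_{G_\varepsilon},
\]
and choosing $\varepsilon$ small enough absorbs the $o(1)$ into the factor $(1+\delta)$.

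The main obstacle, and the place where I would be most careful, is justifying that the intrinsic Delaunay complex really is a simplicial triangulation of $S$ embedded by geodesics, rather than some degenerate abstract complex: one needs empty circumdisks to be topological disks, Voronoi cells to be convex polygons with well-defined dual triangles, and no ``flat'' Voronoi vertices. All of this is handled by taking the sample density $1/\varepsilon$ large compared to the maximum curvature and small compared to the injectivity radius of $(S,m)$, which are fixed constants of the surface; the quantitative sampling conditions of~\cite{dzm-ssivd-08} make this precise. A minor additional point is the asymptotic area estimate for small metric balls, which is where the constant $\pi$ (and hence the advertised $\sqrt{32/\pi}$) enters; this is a standard consequence of the Bertrand--Diquet--Puiseux formula recalled in Section~\ref{S:prelim}.
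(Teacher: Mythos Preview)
Your proof is essentially the paper's own argument: take an $\varepsilon$-net, apply the Dyer--Zhang--M\"oller result to get an intrinsic Delaunay triangulation with geodesic edges, bound edge lengths by twice the covering radius, and count vertices by packing $\varepsilon/2$-balls using Bertrand--Diquet--Puiseux. The one point the paper makes explicit that you gloss over is a small perturbation of the net (by at most $\eta\varepsilon$) into general position, which is what actually rules out ``flat'' Voronoi vertices---density alone does not prevent four cocircular sample points---but this is a minor technicality that is easily absorbed into your $(1+o(1))$ factors.
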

We have stated this result in terms of the number~$n$ of triangles; in
fact, in the proof we will derive it from a version in terms of the number
of vertices; Euler's formula and double counting imply that, for surfaces,
the two versions are equivalent.  Together with Hutchinson's
theorem~\cite{h-snceg-88}, this result immediately yields a new proof of
Gromov's classical systolic inequality:
\begin{corollary}
  For every Riemannian surface~$(S,m)$ of genus~$g$, without boundary, and
  area~$A$, there exists a non-contractible curve with length at most
  $\frac{101.1}{\sqrt{\pi}} \sqrt{A/g}\log g$.
\end{corollary}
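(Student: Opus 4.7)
The plan is to chain Theorem~\ref{T:discrtocont} with Hutchinson's discrete edge-width bound: first discretize the Riemannian surface into a triangulation that only mildly distorts lengths, then invoke Hutchinson to find a short non-contractible closed walk in that triangulation, and finally transport that walk back as a curve on $(S,m)$ whose length is controlled by the distortion factor.

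In more detail, fix $\delta>0$, to be chosen later so that the target constant $101.1/\sqrt\pi$ absorbs it. Apply Theorem~\ref{T:discrtocont} to $(S,m)$ to obtain, for infinitely many $n$, a triangulated combinatorial surface $(S,G)$ with $n$ triangles embedded on $S$, satisfying
\[
   |\gamma|_m \le (1+\delta)\sqrt{\tfrac{32}{\pi}}\sqrt{A/n}\ |\gamma|_G
\]
for every closed curve $\gamma$ of $(S,G)$. By Euler's formula and double-counting, the number of vertices $v$ of $G$ satisfies $n = 2v+4g-4$, so in particular $v\le n/2$ and (choosing $n$ large enough among those guaranteed by the theorem) $v\ge g$, so that Hutchinson's hypothesis is met.

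Next, apply Hutchinson's theorem~\cite{h-snceg-88} to $(S,G)$: there exists a non-contractible closed walk $\gamma$ in $G$ with
\[
   |\gamma|_G \le 25.27\,\sqrt{v/g}\,\log g \le 25.27\,\sqrt{n/(2g)}\,\log g.
\]
Since $G$ is embedded on $S$, non-contractibility in $(S,G)$ is the same as non-contractibility of $\gamma$ seen as a closed curve on the topological surface $S$, hence also on $(S,m)$. Combining the two inequalities yields
\[
   |\gamma|_m \le (1+\delta)\cdot 25.27\cdot\sqrt{\tfrac{32}{2\pi}}\,\sqrt{A/g}\,\log g
           = (1+\delta)\cdot\tfrac{101.08}{\sqrt\pi}\,\sqrt{A/g}\,\log g.
\]
Choosing $\delta>0$ small enough so that $(1+\delta)\cdot 101.08\le 101.1$ gives the desired bound.

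There is essentially no hard step here; the only thing to watch is that Hutchinson's bound is stated in terms of the number of vertices and Theorem~\ref{T:discrtocont} in terms of the number of triangles, which is why one uses the Euler-formula identity $n=2v+4g-4$ to convert between them (and to check $v\ge g$ for infinitely many of the values of $n$ provided by Theorem~\ref{T:discrtocont}, which is immediate since $n\to\infty$ is allowed). Everything else is arithmetic bookkeeping of constants.
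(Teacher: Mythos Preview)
Your proof is correct and follows essentially the same approach as the paper: apply Theorem~\ref{T:discrtocont} to discretize, invoke Hutchinson's bound on the resulting triangulation (after checking $v\ge g$ via Euler's formula), and combine the two inequalities to get the constant $101.08(1+\delta)/\sqrt\pi$. The only cosmetic difference is that the paper plugs in the exact value $v=n/2+2-2g$ rather than the upper bound $v\le n/2$, but this leads to the same final estimate.
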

\begin{proof}
  Let $\delta>0$, and let $(S,G)$ be the triangulated combinatorial surface
  implied by Theorem~\ref{T:discrtocont} with $n\ge 6g-4$ triangles.
  Euler's formula implies that the number~$v$ of vertices of~$G$ is at
  least~$g$, hence we can apply Hutchinson's result~\cite{h-snceg-88},
  which yields a non-contractible curve $\gamma$ on~$G$ with
  $|\gamma|_G\leq 25.27 \sqrt{(\frac{n}{2}+2-2g)/g}\log g$.  By
  Theorem~\ref{T:discrtocont}, $|\gamma|_m
  \leq\frac{101.08(1+\delta)}{\sqrt{\pi}}\sqrt{A/g}\log g$.
\end{proof}
On the other hand, using this theorem in the contrapositive together with
the Buser--Sarnak examples (Theorem~\ref{sys}(4)) confirms the conjecture by
Przytycka and Przytycki~\cite[Introduction]{pp-stsnc-93}:
\begin{corollary}\label{C:conj-pp}
  For any $\varepsilon>0$, there exist arbitrarily large $g$ and~$v$ such
  that the following holds: There exists a triangulated combinatorial
  surface of genus~$g$, without boundary, with $v$~vertices, on which the
  length of every non-contractible closed curve is at least
  $\frac{1-\varepsilon}{6}\sqrt{v/g}\log g$. 
\end{corollary}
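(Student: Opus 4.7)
The plan is to feed a Buser--Sarnak surface into Theorem~\ref{T:discrtocont} and then use the systolic lower bound on that surface to force a matching lower bound on the edge-width of the triangulation produced.

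Fix $\varepsilon>0$. For each genus $g$ supplied by Theorem~\ref{sys}(4), take a Riemannian surface $(S,m)$ of genus $g$ with constant curvature $-1$, area $A=4\pi(g-1)$, and systole $\sys(S,m)\geq \frac{2}{3\sqrt{\pi}}\sqrt{A/g}\log g-c'''$. Choose an auxiliary $\delta>0$ (to be fixed later, depending only on $\varepsilon$), and apply Theorem~\ref{T:discrtocont} to $(S,m)$. This yields, for infinitely many $n$, a triangulated combinatorial surface $(S,G)$ with $n$ triangles such that every closed curve $\gamma$ in $(S,G)$ satisfies
\[
|\gamma|_m\;\leq\;(1+\delta)\sqrt{\tfrac{32}{\pi}}\sqrt{A/n}\,\cdot\,|\gamma|_G.
\]

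Now let $\gamma$ be any non-contractible closed walk in $G$. Since $G$ is cellularly embedded in $S$, $\gamma$ is also non-contractible as a continuous closed curve on $(S,m)$, hence $|\gamma|_m\geq\sys(S,m)$. Combining this with the previous displayed inequality gives
\[
|\gamma|_G\;\geq\;\frac{\sys(S,m)}{(1+\delta)\sqrt{32/\pi}\sqrt{A/n}}\;\geq\;\frac{1}{(1+\delta)\sqrt{32/\pi}}\left(\frac{2}{3\sqrt{\pi}}\sqrt{n/g}\log g\;-\;c'''\sqrt{n/A}\right).
\]
The constants collapse via $\tfrac{2}{3\sqrt{\pi}}\big/\sqrt{32/\pi}=\tfrac{2}{3\sqrt{32}}=\tfrac{1}{6\sqrt{2}}$, so the leading term on the right is $\tfrac{1}{6\sqrt{2}(1+\delta)}\sqrt{n/g}\log g$.

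It remains to translate from the number of triangles to the number of vertices. Euler's formula combined with the triangulation condition $2e=3n$ yields $n=2v+4g-4$, so for each fixed $g$ we have $n/v\to 2$ as $v\to\infty$; hence $\sqrt{n/g}\to\sqrt{2}\sqrt{v/g}$, which turns the leading constant $\tfrac{1}{6\sqrt{2}(1+\delta)}$ into $\tfrac{1}{6(1+\delta)}$. By choosing $\delta$ small enough, then $g$ large enough so that $\log g$ dominates $c'''\sqrt{n/A}$ (recall $A=4\pi(g-1)$), and finally $n$ (equivalently $v$) large enough inside Theorem~\ref{T:discrtocont}, we obtain
\[
|\gamma|_G\;\geq\;\frac{1-\varepsilon}{6}\sqrt{v/g}\,\log g
\]
for every non-contractible closed walk $\gamma$ in $G$. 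Since $g$ ranges over an infinite set of integers supplied by the Buser--Sarnak construction, and $v$ can be made arbitrarily large for each such $g$, this proves the corollary.

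The only genuine obstacle is bookkeeping the constants: we must track the explicit factor $\sqrt{32/\pi}$ from Theorem~\ref{T:discrtocont}, the factor $\tfrac{2}{3\sqrt{\pi}}$ from Theorem~\ref{sys}(4), and the $\sqrt{2}$ absorbed when passing from $\sqrt{n/g}$ to $\sqrt{v/g}$, and check that they combine to $1/6$. Everything else is a straightforward asymptotic argument in $\delta$, $g$, and $v$.
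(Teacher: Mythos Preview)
Your proof is correct and follows essentially the same route as the paper's: combine the Buser--Sarnak lower bound (Theorem~\ref{sys}(4)) with Theorem~\ref{T:discrtocont}, track the constants to get $\tfrac{1}{6\sqrt{2}}\sqrt{n/g}\log g$, and then convert from triangles to vertices via Euler's formula. The only cosmetic difference is that the paper uses the direct inequality $n=2v+4g-4\ge 2v$ (valid for $g\ge1$) to get $\sqrt{n/g}\ge\sqrt{2}\,\sqrt{v/g}$ immediately, whereas you reach the same conclusion through the asymptotic $n/v\to 2$; your version works but the exact inequality saves a limiting step.
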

\begin{proof}  
  Let $\varepsilon>0$, let $(S,m)$ be a Buser--Sarnak surface from
  Theorem~\ref{sys}(4), and let~$G$ be the graph obtained from
  Theorem~\ref{T:discrtocont} from~$(S,m)$, for some $\delta>0$ to be
  determined later.  Combining these two theorems, we obtain that every
  non-contractible closed curve $\gamma$ in~$G$ satisfies
  \[(1+\delta)\sqrt{\frac{32}{\pi}}\sqrt{\frac An}\ |\gamma|_G\ge
  \frac2{3\sqrt\pi}\sqrt{\frac Ag}\log g-c''',\] where $A=4\pi(g-1)$.  If
  $\delta$ was chosen small enough (say, such that $1/(1+\delta)\ge
  1-\varepsilon/2$), and $g$ was chosen large enough, we have
  $|\gamma|_G\ge \frac{1-\varepsilon}{3\sqrt{8}}\sqrt{\frac ng}\log
  g$. Finally, we have $n\ge 2v$ by Euler's formula.
\end{proof}

Before delving into the proof of Theorem~\ref{T:discrtocont}, we introduce
a refinement of the well-known injectivity radius.  The \emph{strong
  convexity radius} at a point~$x$ in a Riemannian surface~$(S,m)$ is the
largest radius $\rho_x$ such that for every $r<\rho_x$ the ball of radius
$r$ centered at $x$ is strongly convex, that is, for any $p,q \in B(x,r)$
there is a unique shortest path in~$(S,m)$ connecting $p$ and $q$, this
shortest path lies entirely within $B(x,r)$, and moreover no other geodesic
connecting $p$ and $q$ lies within $B(x,r)$; we refer to
Klingenberg~\cite[Def.~1.9.9]{k-rg-95} for more details. The strong
convexity radius is positive at every point, and its value on the surface
is continuous (see also Dyer, Zhang, and
M\"oller~\cite[Sect.~3.2.1]{dzm-ssivd-08}).  It follows that for every
compact Riemannian surface~$(S,m)$, the \emph{strong convexity radius}
of~$(S,m)$, namely, the infimum of the strong convexity radii of the points
in~$(S,m)$, is strictly positive.  We will need the following lemma, which
is a result of of Dyer, Zhang, and M\"oller~\cite[Corollary 2]{dzm-ssivd-08}
(see also Leibon~\cite[Theorem~1]{l-rdttat-99} for a very related theorem):
\begin{lemma}\label{L:Delaunay}
  Let $(S,m)$ be a Riemannian surface without boundary, let $\rho'>0$ be
  less than half the strong convexity radius of $(S,m)$, and let $P$ a
  point set of $S$ in general position such that for every $x$ on $S$,
  there exists a point $p$ of $P$ such that $d_m(x,p) \leq \rho'$. Then the
  Delaunay graph of~$P$ is a triangulation of $S$, and its edges are
  shortest paths.
\end{lemma}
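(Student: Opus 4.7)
The cited statement is essentially Corollary 2 of Dyer, Zhang, and M\"oller, so the cleanest plan is to verify that our sampling hypothesis implies the hypotheses of their corollary. Below I sketch how one would carry out a self-contained proof.

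First, I would show that every Voronoi cell $V_p := \{x \in S : d_m(x,p) \le d_m(x,q) \text{ for all } q \in P\}$ is contained in the closed ball of radius $\rho'$ around $p$. This follows immediately from the density assumption: given $x \in V_p$, pick $q \in P$ with $d_m(x,q) \le \rho'$; then $d_m(x,p) \le d_m(x,q) \le \rho'$. Since $2\rho'$ is strictly less than the strong convexity radius, $V_p$ sits inside a strongly convex ball centered at $p$, on which all local metric computations are well-behaved.

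Next, I would analyze the local geometry inside such a ball. In a strongly convex ball any two points are joined by a unique minimizing geodesic lying in the ball, so the bisector between $p$ and any other site $q$ whose Voronoi cell meets $V_p$ is a smooth curve, transverse to the geodesic joining $p$ and $q$. Intersecting these bisectors makes $V_p$ a topological disk bounded by finitely many smooth arcs. The general position assumption ensures that no four sites are equidistant from a common point, so every Voronoi vertex lies on exactly three bisectors, hence has degree three. Consequently the Voronoi diagram is a trivalent cellular graph embedded in~$S$. Dualizing, the Delaunay graph has faces in bijection with trivalent Voronoi vertices and therefore all triangular, proving that it is a triangulation of~$S$.

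Finally, I would show that each Delaunay edge can be realized as a shortest geodesic between its endpoints. If $V_p$ and $V_q$ share a boundary arc, pick any point $x$ on this arc; then $d_m(x,p) = d_m(x,q) \le \rho'$, so both $p$ and $q$ lie in a strongly convex ball centered at $x$ of radius at most $\rho'$, still below the strong convexity radius. Strong convexity then forces a unique minimizing geodesic from $p$ to $q$, which we realize as the Delaunay edge. The step I expect to be the main obstacle in a fully self-contained argument is showing that these local pieces assemble into a globally consistent cellular decomposition---that the Voronoi cells actually tile $S$ without pathologies and that the Delaunay edges, drawn as shortest geodesics, are pairwise disjoint away from their common endpoints. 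This global consistency is exactly the content of the Dyer--Zhang--M\"oller (and Leibon) analysis, which I would invoke to finish.
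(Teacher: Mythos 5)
The paper itself gives no proof of this lemma --- it simply cites it as Corollary 2 of Dyer, Zhang, and M\"oller (with Leibon's theorem noted as closely related), and your proposal ultimately does the same, deferring the hard global-consistency step to that reference. The additional local analysis you sketch (Voronoi cells contained in balls of radius $\rho'$, hence in strongly convex neighborhoods since $2\rho'$ stays below the strong convexity radius, trivalent Voronoi vertices from general position, shortest geodesics for Delaunay edges) is sound and consistent with what Dyer--Zhang--M\"oller actually prove, so your approach matches the paper's.
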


\begin{proof}[Proof of Theorem~\ref{T:discrtocont}]
  Let $\eta$, $0<\eta<1/2$ be fixed, and $\varepsilon>0$ to be defined
  later (depending on~$\eta$).  Let $P$ be an $\epsilon$-separated net
  on~$(S,m)$, that is, $P$ is a point set such that any two points in~$P$
  are at distance at least~$\varepsilon$, and every point in~$(S,m)$ is at
  distance smaller than~$\varepsilon$ from a point in~$P$.  For example, if
  we let $P$ be the centers of an inclusionwise maximal family of disjoint
  open balls of radius $\varepsilon/2$, then $P$ is an $\epsilon$-separated
  net. In the following we put $P$ in general position by moving the points
  in~$P$ by at most~$\eta\varepsilon$; in particular, no point in the
  surface is equidistant with more than three points in $P$.

  Let $P=\Set{p_1,\ldots,p_v}$, and let \[V_i:=\Setbar{x\in(S,m)}{\forall
    j\neq i, d(x,p_i)\leq d(x,p_j)}\] be the Voronoi region of~$p_i$.  Since
  every point of~$(S,m)$ is at distance at most~$(1+\eta)\varepsilon$ from
  a point in~$P$, each Voronoi region~$V_i$ is included in a ball of
  radius~$(1+\eta)\varepsilon$ centered at~$p_i$.  Define the Delaunay
  graph of~$P$ to be the intersection graph of the Voronoi regions, and
  note that if $V_i \cap V_j\neq \emptyset$, then the corresponding
  neighboring points of the Delaunay graph are at distance at
  most~$2(1+\eta)\varepsilon$.

  Assume that $\varepsilon$ is small enough so that $(1+\eta)\varepsilon$
  is less than half the strong convexity radius.  Lemma~\ref{L:Delaunay}
  implies that the Delaunay graph, which we denote by~$G$, can be embedded
  as a triangulation of~$S$ with shortest paths representing the edges.

  Consider a closed curve~$\gamma$ on~$G$.  Since neighboring points in~$G$
  are at distance no greater than $2(1+\eta)\varepsilon$ on~$(S,m)$, we
  have $|\gamma|_m \leq 2(1+\eta) \varepsilon |\gamma|_G$.  To obtain the
  claimed bound, there remains to estimate the number~$v$ of points in~$P$.
  By compactness, the Gaussian curvature of $(S,m)$ is bounded from above
  by a constant~$K$.  By the Bertrand--Diquet--Puiseux theorem, the area of
  each ball of radius $\frac{1-2\eta}{2}\varepsilon$ is $\pi(1-2\eta)^2
  \frac{\varepsilon^2}{4}-K\pi
  (1-2\eta)^4\frac{\varepsilon^4}{16}+o(\varepsilon^4)\geq \pi(1-2\eta)^3
  \frac{\varepsilon^2}{4}$ if $\varepsilon>0$ is small enough.  Since the
  balls of radius $(1-2\eta)\frac \varepsilon2$ centered at~$P$ are
  disjoint, their number~$v$ is at most
  $A/(\pi(1-2\eta)^3\frac{\varepsilon^2}{4})$. In other words, $\epsilon
  \leq \frac{2}{\sqrt{\pi(1-2\eta)^3}} \sqrt{A/v}$. Putting together our
  estimates, we obtain that \[|\gamma|_m\leq
  \frac{4(1+\eta)}{\sqrt{\pi(1-2\eta)^3}} \sqrt{\frac A{n/2-2g+2}}\
  |\gamma|_G,\] where $n$~is the number of triangles of~$G$.  Thus, if
  $\varepsilon>0$ is small enough, $n$ can be made arbitrarily large, and
  the previous estimate implies, if $\eta$ was chosen small enough (where
  the dependency is only on~$\delta$) that $|\gamma|_m\le (1+\delta)
  \sqrt{\frac {32}{\pi}}\sqrt{\frac An}\ |\gamma|_G$.
\end{proof}

\paragraph{Remark on orientability.}\ 
Notice that Theorems~\ref{T:conttodiscr} and \ref{T:discrtocont} hold for
non orientable surfaces with the same proofs.  We stated the continuous
systolic inequality for orientable surfaces. As observed by
Gromov~\cite[p.~306]{g-sii-92} a double cover argument shows that the same
results hold (up to a multiplicative constant factor) for the systole of
non-orientable surfaces other than the projective plane. For the projective
plane, a systolic inequality also holds, for which the exact constant is
known and corresponds to metrics of constant positive
curvature~\cite{p-sicnr-52}. Therefore, since our results do not rely on
orientability, the discrete systolic inequalities hold for all surfaces,
with similar dependence on the Euler genus, up to a multiplicative
factor. Notice that when we talked about homology no coefficients were
specified. It is customary to assume $\ZZ$ coefficients for orientable
manifolds and $\mathbb{F}_2$ for non orientable ones.

\section{Computing Short Pants Decompositions}\label{S:arnaud}

Recall that the problem of computing a shortest pants decomposition
for a given surface is open, even in very special cases.  In this
section, we describe an efficient algorithm that computes a short
pants decomposition on a triangulation.  Technically, we allow several
curves to run along a given edge of the triangulation, which is best
formalized in the dual cross-metric setting.  If $g$ is fixed, the
length of the pants decomposition that we compute is of the order of
the square root of the number of vertices:

\begin{theorem}\label{T:mainpart3}
  Let $(S,G^*)$ be a (trivalent, unweighted) cross-metric surface of genus
  $g \geq 2$, with $n$ vertices, without boundary. In $O(gn)$ time, we can
  compute a pants decomposition $(\gamma_1, \ldots, \gamma_{3g-3})$ of~$S$
  such that, for each~$i$, the length of~$\gamma_i$ is at most~$C\sqrt{gn}$
  (where $C$~is some universal constant).
\end{theorem}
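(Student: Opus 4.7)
The plan is to adapt the approach of Buser~\cite{b-gscrs-92}, which produces short pants decompositions on hyperbolic surfaces, to the discrete cross-metric setting, using BFS-based ``ball growing'' in place of geodesic ball growing.

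First, I would establish a discrete analog of Buser's key lemma: in any (trivalent, unweighted) cross-metric surface $(S, G^*)$ with $n$ vertices, genus $g' \geq 1$, and boundary, one can find in $O(n)$ time a simple closed curve of length $O(\sqrt{gn})$ that is either non-separating, or (when $g'=0$ and the number of boundary components is at least $4$) separates the surface into a pair of pants and a remainder. The construction performs BFS on the dual graph, starting from a seed (a single face, or a carefully chosen set on the boundary of the current subsurface). At each BFS level one extracts ``handle cycles'' via the usual tree-cotree decomposition: each non-tree edge at depth $r$ yields a loop of length at most $2r+1$. An averaging argument across levels, together with the bound of $n$ on total vertex count and a Buser-style accounting that ties the useful depth to $\sqrt{gn}$, guarantees that some level produces a short simple non-trivial loop with the required topological type.

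Second, I would iterate: maintain a family $\gamma_1, \ldots, \gamma_i$ of disjoint simple closed curves already found; on every connected component of the surface obtained by cutting, apply the lemma above to extract the next curve $\gamma_{i+1}$. Cutting along $\gamma_{i+1}$ in the cross-metric model only splits the edges of $G^*$ crossed by $\gamma_{i+1}$ into pairs of boundary edges and does not create new vertices, so the total vertex count across subsurfaces stays bounded by $n$, and every subsurface inherits genus at most $g$. Since each iteration either drops the genus of a component by one or splits off a pair of pants, the process terminates after exactly $3g-3$ iterations, yielding a pants decomposition where every curve has length $O(\sqrt{gn})$.

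The main obstacle will be the iterative length bound. Specifically, after several cuts the current subsurface has complicated boundary, and one must seed the BFS so that the resulting short loop is both simple and topologically essential (not contractible, and not parallel to an existing boundary component). The key insight, directly analogous to Buser's, is that the bound $O(\sqrt{gn})$ per curve depends only on the global parameters $g$ and $n$, which do not grow as we recurse, rather than on the local remaining topology --- so the same BFS-depth budget works at every stage. For the running time, each iteration is a single BFS and a short cycle extraction in $O(n)$ time, and with $3g-3 = O(g)$ iterations the total cost is $O(gn)$; the only delicate point is representing the evolving cross-metric subsurfaces compactly so that successive BFS passes do not blow up the running time (using a persistent dual graph with marked cut edges).
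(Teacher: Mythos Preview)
Your high-level plan---iterate a Buser-style ``find a short curve, cut, repeat'' procedure---matches the paper's, but the mechanism you describe is not the one that makes Buser's argument work, and your stated key lemma is not true as written.

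The gap is the claim that in any cross-metric subsurface with $n$ interior vertices and \emph{arbitrary} boundary, a BFS-from-a-seed search produces an essential simple closed curve of length $O(\sqrt{gn})$, independently of the boundary. After several cuts the current piece may have genus zero and several long boundary components; then every essential simple closed curve must separate two boundaries and hence is at least as long as the shortest curve in some boundary homotopy class. No tree--cotree or level-averaging argument bounds that length by $O(\sqrt{gn})$ without controlling the boundary. This is exactly why the naive greedy approach (shortest essential curve each time, via Hutchinson's bound) only yields $f(g)\sqrt{n}$ with $f$ superpolynomial, as the paper notes.

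What Buser actually does---and what the paper's Proposition~\ref{P:algopantsdecomp-main} implements discretely---is different: at each step one shifts \emph{all} boundary components inward simultaneously until two merge or one splits, and the new pants curve is built from an \emph{old boundary level} plus a short connecting arc~$\eta$. The quantitative heart is a trade-off on the \emph{total boundary length}: if $|\partial S|\le\ell$, then after one step $|\partial S'|\le \ell + 4n/\ell + 2$, because either some shifted level still has total length $\le\ell$ (use those curves directly) or every subsequent level has length $>\ell$, which consumes area and forces the arc to satisfy $|\eta|=O(n/\ell)$. Setting $\ell_k=C\sqrt{kn}$ and checking $\ell_k+4n/\ell_k+2\le\ell_{k+1}$ gives $\ell_k=O(\sqrt{gn})$ by induction over the $O(g)$ steps. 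Your sentence ``the bound depends only on the global parameters $g$ and $n$, not on the local remaining topology'' inverts the logic: the bound \emph{does} depend on the current boundary length, and the whole proof is the inductive control of that length via the recurrence above. That recurrence is the missing idea in your proposal.
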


 With a little more effort, we can obtain that the length of $\gamma_i$ is at most $C \sqrt{in}$ but we focus on the weaker bound for the sake of clarity. 

The inspiration for this theorem is a result by Buser~\cite{b-gscrs-92},
stating that in the Riemannian case, there exists a pants decomposition
with curves of length bounded by $3 \sqrt{gA}$. The proof of
Theorem~\ref{T:mainpart3} consists mostly of translating Buser's
construction to the discrete setting and making it algorithmic.  The key
difference is that for the sake of efficiency, unlike Buser, we cannot
afford to shorten the curves in their homotopy classes, and we have
to use contractibility tests in a careful manner.  

Given simple, disjoint closed curves~$\Gamma$ in general position on a
(possibly disconnected) cross-metric surface $(S,G^*)$, cutting $S$ along
$\Gamma$, and/or restricting to some connected components, gives another
surface $S'$, and restricting $G^*$ to $S'$ naturally yields a cross-metric
surface that we denote by $(S',G^*_{|S'})$.  To simplify notation we denote
by~$|c|$ (instead of~$|c|_{G^*}$) the length of a curve~$c$ on a
cross-metric surface~$(S,G^*)$.

A key step towards the proof of Theorem~\ref{T:mainpart3} is the following
proposition, which allows us to effectively cut a surface with boundary along
closed curves of controlled length.

\begin{proposition}\label{P:algopantsdecomp-main}
  Let $(S,G^*)$ be a possibly disconnected cross-metric surface, such
  that every connected component has non-empty boundary and admits a
  pants decomposition.  Let $n$ be the number of vertices of~$G^*$ in
  the interior of~$S$.  Assume moreover that $|\partial S|\le\ell$,
  where $\ell$ is an arbitrary positive integer.

  We can compute a family $\Delta$ of disjoint simple closed curves
  of~$(S,G^*)$ that splits~$S$ into one pair of pants, zero, one, or more
  annuli, and another possibly disconnected surface~$S'$ containing no
  disk component, such that $|\partial S'|\leq\ell+4n/\ell+2$. The algorithm takes as
  input~$(S,G^*)$, outputs $\Delta$ and $(S',G^*_{|S'})$, and takes linear
  time in the complexity of~$(S,G^*)$.
\end{proposition}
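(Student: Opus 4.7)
}
The plan is to adapt Buser's Riemannian collaring argument for short pants decompositions~\cite{b-gscrs-92} to the cross-metric setting, treating one connected component of $S$ at a time; we assume $S$ is connected for the sketch. I would begin by choosing a boundary component $\beta$ of $S$ and performing a breadth-first search on the faces of $G^*$, recording for each face $f$ its cross-metric distance $d(f)$ to $\beta$. Let $N_r$ be the union of faces with $d(f)\le r$ and $L_r=\partial N_r\setminus\beta$ the outer level set at radius $r$. While $N_r$ is an annular collar of $\beta$, $L_r$ is a single simple closed curve homotopic to $\beta$.

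The BFS is continued until the first radius $r^*$ at which the topology of $N_r$ changes. Two cases arise: (A)~the collar swallows a second boundary component $\beta'$ of $S$, so that $N_{r^*}$ is a pair of pants with boundaries $\beta$, $\beta'$, and $L_{r^*}$, and I set $\Delta=\{L_{r^*}\}$; or (B)~the outer boundary self-collides, so that $L_{r^*}$ becomes two simple closed curves bounding, together with $\beta$, a pair of pants, in which case $\Delta$ consists of these two curves. In either case cutting $S$ along $\Delta$ peels off exactly one pair of pants; any parasitic components (level curves bounding disks or annuli in $S$) are trivially identified and form the annular pieces of the output, so that the remainder $S'$ contains no disk component.

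Length control rests on two ingredients. A discrete collar inequality: as long as $N_r$ is an annulus, the number of interior vertices of $G^*$ inside $N_r$ grows linearly in $r$ with slope $\Omega(|\beta|)$, since each shell between $L_{r-1}$ and $L_r$ retracts onto a cylinder over $\beta$. Combined with the total area bound $n$ this forces $r^*\le Cn/|\beta|$. An averaging argument: since $\sum_{r\le r^*}|L_r|$ counts edges of $G^*$ in $N_{r^*}$ and is $O(n)$, there is a level with $|L_r|=O(n/r^*)=O(|\beta|)$, and a more careful bookkeeping shows the same estimate holds for the cut $\Delta$ at the topology-change moment. Choosing $\beta$ cleverly (its length $|\beta|$ being calibrated against the threshold $\ell$: long components give a short cut quickly, while if all boundary components are short an alternative ``merge'' construction growing simultaneously from two of them gives the same control) and using the identity $|\partial S'|=|\partial S|+|\Delta|-|\beta|$ (or $-|\beta|-|\beta'|$ in case~(A)) should yield the precise bound $|\partial S'|\le\ell+4n/\ell+2$.

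The algorithm consists of one BFS augmented with incremental tracking of the Euler characteristic $\chi(N_r)$: each face addition changes $\chi$ by $1-k$ where $k$ is the number of already-visited neighbors, and the first drop in $\chi$ signals the topological event. Hence the running time is linear in the size of $G^*$. The main obstacle I anticipate is precisely calibrating the constants to match the target bound $4n/\ell+2$ rather than a coarser $O(n/\ell)$: this requires delicate bookkeeping of how much of $\partial S$ is absorbed into the pair of pants in each case, and of how $|\Delta|$ compares to the averaged level-curve length at the exact event radius.
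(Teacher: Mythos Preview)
Your outline has the right flavor—level sets of the distance to the boundary, waiting for a topological event, a Buser-style area/length trade-off—but it misses the two ingredients that make the bound work.

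First, the paper shifts \emph{all} boundary components inward simultaneously, not a single chosen~$\beta$. At each depth~$c$ one has curves $\gamma^1_c,\dots,\gamma^k_c$, and one records the \emph{total} length $U_c=\sum_i|\gamma^i_c|$; the backtracking depth~$s$ is the last one with $U_s\le\ell$. After the event at depth~$r$, \emph{every} boundary is replaced by its depth-$s$ copy~$\gamma^i_s$; this is exactly why the output contains ``zero, one, or more annuli'' in addition to the pair of pants. Growing from a single~$\beta$ loses this bookkeeping: the other boundaries keep their original lengths, and the bound one obtains on $|\partial S'|$ involves $n/|\beta|$ rather than $n/\ell$. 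Your allusion to ``growing simultaneously from two of them'' is a step in the right direction but still not enough.

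Second, and more fundamentally, your claim that ``more careful bookkeeping shows the same estimate holds for the cut~$\Delta$ at the topology-change moment'' is exactly where the argument breaks: the lengths of the level curves at the event depth~$r$ are uncontrolled, and the paper says so explicitly. The fix is \emph{not} to use the depth-$r$ curves but the depth-$s$ curves, together with a short connecting arc~$\eta$ running from depth~$s$ through the tangency at depth~$r$, of length $|\eta|\le 2(r-s)+1$. The shell between depths~$c$ and~$c+1$ has area at least $U_{c+1}$ (the \emph{outer} total length—your ``slope $\Omega(|\beta|)$'' claim is unjustified, since level curves can shrink while the collar remains annular), and since $U_c>\ell$ for $s<c\le r$ one gets $n\ge(r-s)\ell$, whence $|\partial S'|\le U_s+2|\eta|\le\ell+4n/\ell+2$. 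Your averaging argument locates \emph{some} depth with a short level curve, but that curve is homotopic to a boundary component and bounds only an annulus; the arc~$\eta$ is what turns it into a genuine pair-of-pants boundary, and this hybrid construction is the missing idea.
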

We first show how Theorem~\ref{T:mainpart3} can be deduced from this
proposition.  It relies on computing a good approximation of the shortest
non-contractible closed curve, cutting along it, and applying
Proposition~\ref{P:algopantsdecomp-main} inductively:

\begin{proof}[Proof of Theorem~\ref{T:mainpart3}]
  To prove Theorem~\ref{T:mainpart3}, we consider our cross-metric surface
  without boundary $(S,G^*)$, and we start by computing a simple
  non-contractible curve~$\gamma$ whose length is at most twice the length
  of the shortest non-contractible closed curve.  Such a curve can be
  computed in $O(gn)$ time~\cite[Prop.~9]{ccl-aeweg-12} (see also Erickson
  and Har-Peled~\cite[Corollary~5.8]{eh-ocsd-04}) and has length at
  most~$C\sqrt{n}$, where $C$ is a universal constant, see
  Section~\ref{S:alfredo}. This gives a surface $S^{(1)}$ with two boundary
  components.
  
Let us define the sequence $\ell_k=C \sqrt{kn}$ for some constant
$C$. We then proceed inductively, applying
Proposition~\ref{P:algopantsdecomp-main} with $\ell=\ell_k$ to
$S^{(k)}$, in order to obtain another surface $S^{(k)\prime}$, from
which we remove all the pairs of pants and annuli. We denote the
resulting surface by $S^{(k+1)}$ and repeat until we obtain a surface
$S^{(m)}$ that is empty. Note that, for every $k$, $S^{(k)}$ contains
no disk, annulus, or pair of pants, and that every application of
Proposition~\ref{P:algopantsdecomp-main} gives another pair of pants.
Therefore, we obtain a pants decomposition of~$S$ by taking the
initial curve~$\gamma$ together with the union of the collections of
curves ~$\Delta$ given by successive applications of
Proposition~\ref{P:algopantsdecomp-main} and removing, for any
subfamily of $\Delta$ of several homotopic curves, all but the
shortest one of them. The number of applications of
Proposition~\ref{P:algopantsdecomp-main} is bounded by the number of
pants in a pants decomposition, which is $2g-2$.

  There remains to bound the length of the closed curves in the pants
  decomposition. A small computation shows that $\ell_k+4n/\ell_k+2
  \leq\ell_{k+1}$ for $C$ large enough and $k\leq 3n$, which holds
  since $k\leq 3g-3\le3n$. Now, $|\partial S^{(1)}| \leq C
  \sqrt{n}=\ell_1$, and applying Proposition 4.2 inductively on
  $S^{(k-1)}$ with $\ell=\ell_{k-1}$ shows that $|\partial S^{(k)}|
  \leq \ell_k= C\sqrt{kn}$.  Therefore, the length of the $k$th closed
  curve of the pants decomposition is at most $C\sqrt{kn}$. The total
  complexity of this algorithm is $O(gn)$ since we applied $O(g)$
  times Proposition~\ref{P:algopantsdecomp-main}.
\end{proof}

Now, onwards to the proof of the main proposition.

\begin{proof}[Proof of Proposition~\ref{P:algopantsdecomp-main}]
  We will only describe how $\Delta$ is computed, since one directly
  obtains $S'$ by cutting along $\Delta$ and discarding the annuli and one
  pair of pants.

  The idea is to \emph{shift} the boundary components simultaneously until
  one boundary component \emph{splits}, or two boundary components
  \emph{merge}.  This is analogous to Morse theory on the surface with the
  function that is the distance to the boundary. In this way, we choose
  the homotopy classes of the curves in $\Delta$, but in order to
  control their length we actually do some backtracking before
  splitting or merging.

  Initially, let $\Gamma=(\gamma^1, \ldots, \gamma^k)$ be (curves
  infinitesimally close to) the boundaries of $S$.  We will shift these
  curves to the right while preserving their simplicity, disjointness, and
  homotopy classes.  We orient each $\gamma^i$ so that it has the surface
  to its right at the start.  In particular, at any time of the algorithm,
  any two curves are to the right of each other.

  \medskip

\begin{figure}
  \centering
  \def\svgwidth{\linewidth}
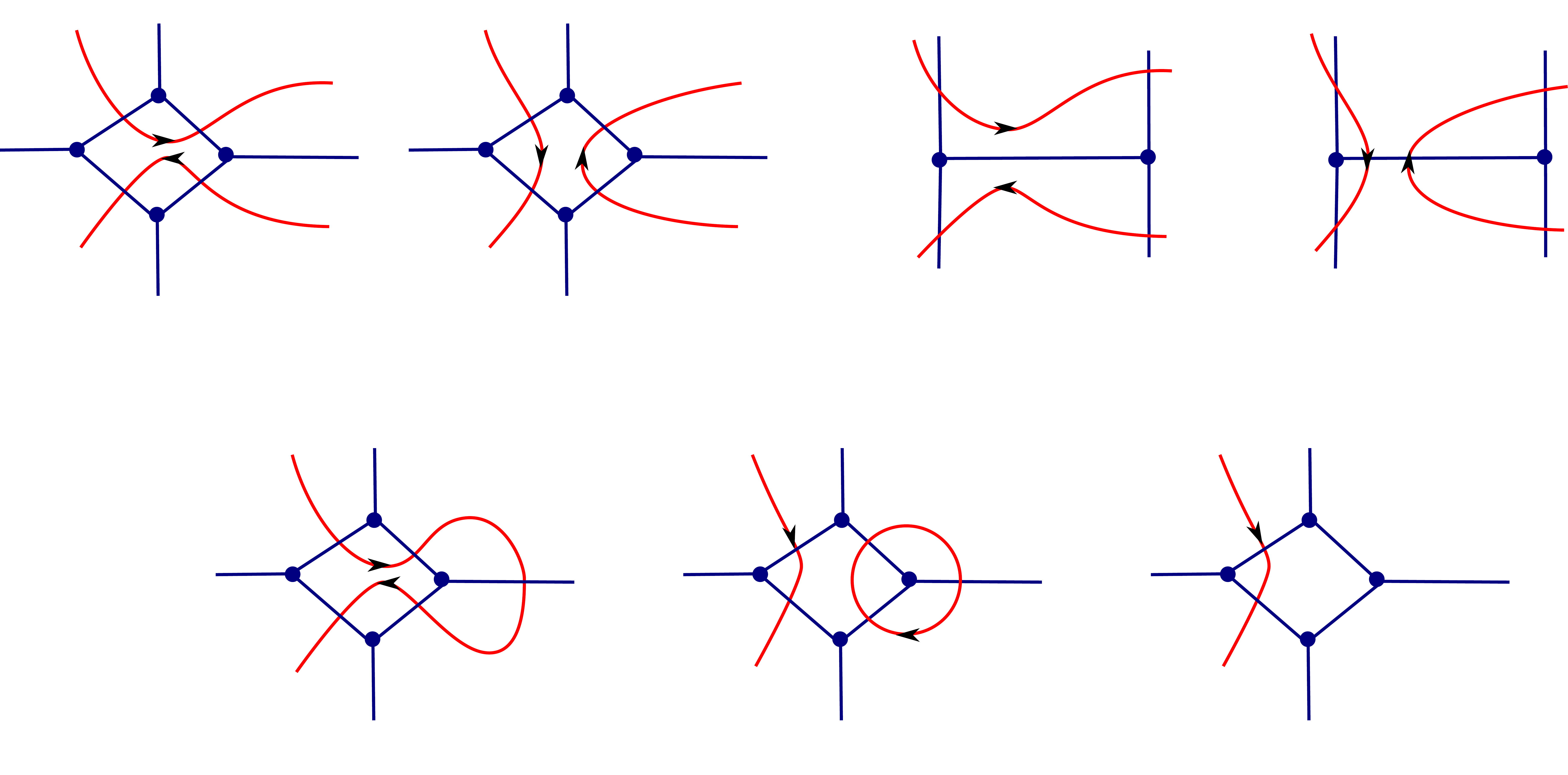%

  \caption{(a) Two tangent pieces of curves lying in the same face. (b) The
    rewiring of these curves. (c) Two tangent pieces of curves lying in
    adjacent faces. (d) The rewiring of these curves. (e) A curve that is
    tangent with itself. (f) Its rewiring. (g) The result after discarding
    the contractible subcurve.}
  \label{fig:pushing}
\end{figure}

  \paragraph*{Shifting phase:}\ 
  The idea of shifting a closed curve $\gamma^i$ one step to the right is
  to push it so that every point of the resulting curve is exactly at
  distance one from the original curve. The shifting phase consists of
  shifting every curve in $\Gamma$ one step to the right, and to reiterate.  During this
  process, curves will collide, which will allow us to build the new curves
  of the pants decomposition.

  A \emph{piece} of a curve in~$\Gamma$ is a maximal subpath inside a face
  of~$G^*$.  We say that two distinct pieces of curves in~$\Gamma$ are
  \emph{tangent} if (i) they are not consecutive pieces along the same
  curve and (ii) there is a path on the surface that starts to the right of
  one piece, arrives to the right of the other, crosses no piece, and
  crosses at most one edge of~$G^*$, see Figures~\ref{fig:pushing}(a, c,
  e).

  Basically, tangencies are the obstacles to shifting the curve to the
  right.  On the other hand, in a tangency, we can \emph{rewire} the curves
  as shown on Figure~\ref{fig:pushing}(b, d, f), by locally exchanging the
  connections between the pieces without changing the orientations of the
  pieces.  Our algorithm needs first to remove all tangencies in~$\Gamma$,
  by repeating the following steps while there exists a tangency:
  \begin{itemize}
  \item If the pieces involved in the tangency belong to the same closed
    curve, then, by the chosen orientation, the rewiring necessarily
    transforms the initial curve into exactly two curves, which we test for
    contractibility.  If one of them is contractible, we discard it
    (Figure~\ref{fig:pushing}(g)) and continue with the other one.
    Otherwise, both are non-contractible; the shifting phase is over, and
    we go to the splitting phase below.
  \item If the tangency involves pieces belonging to different closed
    curves in~$\Gamma$, the rewiring transforms the two curves into a
    single curve; the shifting phase is over, and we go to the merging
    phase below.
  \end{itemize}

  At this step, we removed all tangencies without entering the splitting or
  the merging phase.  Since $G^*$ is trivalent, if $\gamma^i$ were to cross
  consecutively two edges that are incident to the same vertex~$v$ to the
  right of the curve, it would form a tangency with the third edge incident
  to~$v$, a contradiction.  Thus, the local picture is as on
  Figure~\ref{fig:shifting}(a): The edges of~$G^*$ to the right
  of~$\gamma^i$, incident to the faces traversed by~$\gamma^i$, form a
  cycle (the horizontal line in Figure~\ref{fig:shifting}); each edge
  incident to the cycle is either to its left or to its right, and these
  edges are attached to the cycle by distinct vertices; and $\gamma^i$
  crosses exactly those edges of~$G^*$ that are to its left.  We transform
  $\gamma^i$ so that it now crosses exactly those edges that are to the
  right of the cycle, as shown on Figure~\ref{fig:shifting}(b).  The
  absence of tangencies ensures that this still gives disjoint simple
  curves, with the same homotopy classes; of course, this operation may
  create one or several tangencies (in particular, a face of~$G^*$ may now
  be traversed by several pieces).
\begin{figure}
\centering
\def\svgwidth{.6\linewidth}
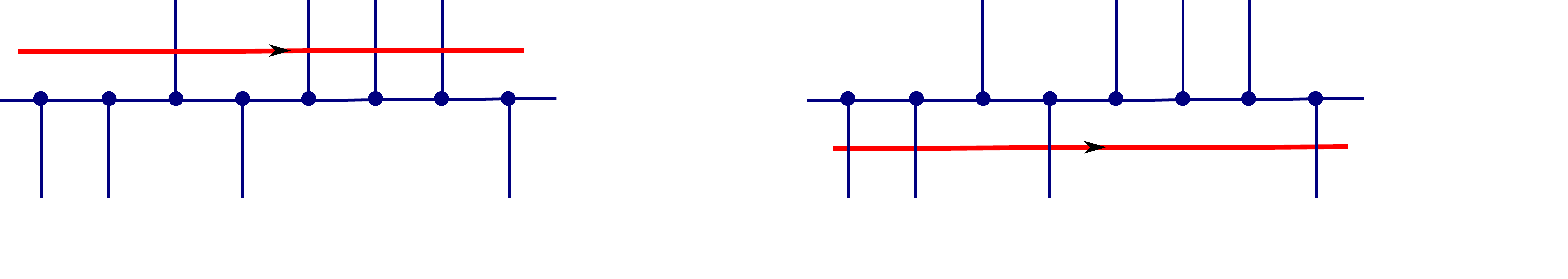%

\caption{Shifting a curve one step to its right.}
\label{fig:shifting}
\end{figure}

When this is done, we repeat the entire shifting phase (again starting with
the tangency detection).  Thus, the shifting phase is repeated over and
over, until we enter the splitting phase or the merging phase below.
Before describing these phases, let us describe some properties that are
satisfied when we exited the shifting phase.  Let $r$ be the integer such
that each curve has been pushed $r$~steps to the right.  For each~$i$,
$1\le i\le k$, and each~$c$, $0\le c\le r$, let $\gamma^i_c$ be the
curve~$\gamma^i$ pushed by $c$~steps.  
Note that by construction, the distance between any point of $\gamma_{c}^i$
and the curve $\gamma_{c-1}^i$ is exactly one. Let $s$ denote the
largest~$c\le r$ such that $\sum_{i=1}^k |\gamma^i_c| \leq \ell$.
(Remember that this is the case for~$c=0$ by hypothesis.)

  \medskip

\begin{figure}
  \centering \def\svgwidth{\linewidth} %
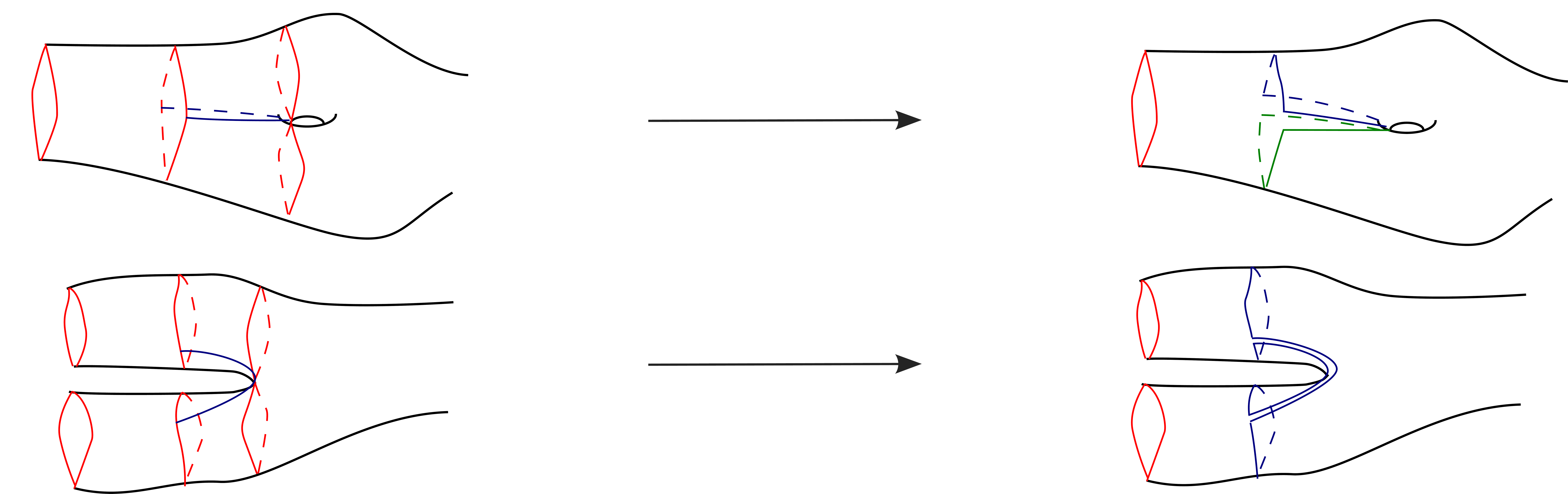%

  \caption{(a) Splitting phase. (b) Merging phase.}
  \label{fig:JoinSplit}
\end{figure}

\paragraph*{Splitting phase:}\ 
We arrived to the splitting phase because two pieces of the same curve
became tangent, and after rewiring, both of the new subcurves are
non-contractible, as is pictured on the top of
Figure~\ref{fig:JoinSplit}. The purpose of the splitting phase is to choose
geometric representatives of curves in these homotopy classes.  For
simplicity, let $\gamma^1$ denote the curve that became tangent with itself
during the shifting phase.  First, for every $i\ne1$, we add $\gamma^i_s$
to the family~$\Delta$.  By assumption, $\gamma^1$ splits into two
non-contractible closed curves $\alpha$ and~$\beta$. Let~$\eta$ be the
shortest path with endpoints on $\gamma^1_s$ going through the splitting
tangency between $\alpha$ and $\beta$.  This path can be computed in linear
time (in the complexity of the portion of the surface swept during the
shifting phase) by backtracking from $\gamma^1_r$ to~$\gamma^1_s$, and
adding pieces of~$\eta$ at every step. The path~$\eta$ cuts~$\gamma^1_s$
into two subpaths $\mu$ and~$\nu$.  We denote by $\delta_1$ the
concatenation of $\mu$ and $\eta$, and by $\delta_2$ the concatenation of
$\nu$ and $\eta$. To finish the splitting phase, we add $\delta_1$ and
$\delta_2$ to the family $\Delta$.

  \medskip

  \paragraph*{Merging phase:}\ 
  We arrived to the merging phase because two distinct shifted curves
  became tangent in the shifting phase (Figure~\ref{fig:JoinSplit},
  bottom); and we rewired them, obtaining a curve homotopic to their
  concatenation.  The purpose of the merging phase is to choose a geometric
  representative in this homotopy class.  For simplicity, let us denote by
  $\gamma^1$ and~$\gamma^2$ two curves that became tangent during the
  shifting phase.  First, for every $i\ne1,2$, we add $\gamma^i_s$ to the
  family~$\Delta$.  Let~$\eta$ be the shortest path from~$\gamma^1_s$
  and~$\gamma^2_s$ (as above, we can compute it in linear time). The
  curve~$\delta$ is defined by the concatenation
  $\eta^{-1}\cdot\gamma^1_s\cdot\eta \cdot\gamma^2_s$.  To finish the
  merging phase, we add $\delta$ to $\Delta$.

  \medskip

  \paragraph*{Analysis:}\ 
  After splitting or merging, we added curves to~$\Delta$ that cut the
  surface into an additional pair of pants, (possibly) some annuli, and the
  remaining surface $S'$.  Observe that we did not add any contractible
  closed curve to $\Delta$; thus, $S'$ has no connected component that is a
  disk.  There remains to prove that the length of the boundary $S'$
  satisfies $|\partial S'| \leq \ell +4n / \ell +2$. The key quantitative
  idea is the way in which the value of~$s$ was chosen: If $s$ was equal
  to~$r$ (perhaps the most natural strategy), the boundary of~$S'$ would
  contain (at least) one curve~$\gamma^i_r$, and we would have no control
  on its length.  On the opposite, if we had chosen $s=0$, we would have no
  control on the lengths of the arcs~$\eta$ involved in the merging or the
  splitting.  The choice of~$s$ gives the right trade-off in-between: the
  lengths of the curves $\gamma_i^s$ are controlled by this threshold,
  while the lengths of the arcs are controlled by the area of the annulus
  between $\gamma^i_s$ and $\gamma^i_r$. We now make this explanation
  precise.

  \smallskip

  \emph{Lengths after the splitting phase.}\quad After a splitting
  phase with the curve~$\gamma^1$, the boundary $\partial S'$ of~$S'$
  consists of all the other curves $\gamma^i_s$ in $\Gamma$ and of
  the two new curves, whose sum of the lengths is bounded by
  $|\gamma^{1}_{s}|+2|\eta|$. Hence $|\partial S'| \leq |\gamma^1_{s}|
  + 2|\eta|+\sum_{i=2}^k |\gamma^i_{s}|$, which is at most
  $\ell+2|\eta|$ by the choice of~$s$.  Furthermore, by construction,
  $|\eta| \leq 2(r-s)+1$, as every step of shifting adds at most 2 to
  the length of $\eta$, and it may cost an additional 1 to cross the last tangency edge.

  \smallskip

  \emph{Lengths after the merging phase.}\quad After a merging phase
  with the curves $\gamma^{1}$ and $\gamma^{2}$, the
  boundary~$\partial S'$ of~$S'$ consists of all the other curves
  $\gamma^i_s$ of $\Gamma$, and of the new curve, whose length is
  bounded by $|\gamma^1_s|+|\gamma^2_s|+2|\eta|$.  Hence similarly,
  $|\partial S'| \le \ell +2|\eta|$.  Furthermore, by construction, we
also have $|\eta| \leq 2(r-s)+1$.
 
  \smallskip

  \emph{Final analysis.}\quad Thus, after either the splitting or the
  merging phase, we proved that $|\partial S'|\le\ell+{4(r-s)+2}$.  To
  conclude the analysis, there only remains to prove that $r-s\le\frac
  {n}{\ell}$.

  Let $c\in\{s,\ldots,r-1\}$.  The curves $\gamma^i_c$
  and~$\gamma^i_{c+1}$ bound an annulus~$K_c^i$.  We claim that the
  number $A(K_c^i)$ of vertices in the interior of this annulus, its
  \emph{area}, is at least $|\gamma^i_{c+1}|$.  This follows from the
  shifting procedure (refer back to
  Figure~\ref{fig:shifting}---remember that $G^*$ is trivalent) and
  from the fact that the contractible closed curves possibly stemming
  from~$\gamma^i_c$ only make the area larger, by definition of
  a tangency.

  For $c\in\{s,\ldots,r-1\}$ and $i\in\{1,\ldots,k\}$, the annuli
  $K_c^i$ have disjoint interiors, so the sum of their areas is at
  most~$n$.  By the above formula, this sum is at least
  $\sum_{j=s}^{r-1}U_{c+1}$, where $U_c=\sum_{i=1}^k |\gamma^i_c|$.
  On the other hand, we have $U_{c+1}\ge\ell$ if $s\le c\le r-1$, by
  definition of~$s$.  Putting all together, we obtain $n\ge
  (r-s)\ell$, so $r-s\le\frac {n}{\ell}$.

  \medskip

  \paragraph*{Complexity:}\ 
  At the start, the complexity of the set of curves is bounded by the
  complexity of $(S,G^*)$, and by construction, during the algorithm, the
  complexity of the set of curves is always linear in~$n$. The complexity
  of the splitting phase or the merging phase is thus also linear
  in~$n$. The complexity of outputting the new surface $(S', G^*_{|S'})$ is
  linear in the complexity $\partial S'$, which is, by construction, also
  linear in~$n$.  To conclude, it suffices to prove that the whole shifting
  phase takes linear time.  We study separately the tangency detection step
  and the contractibility tests.

  \emph{Tangency detection.}  Remember that our curves are stored on
  the cross-metric surface: At each time, we maintain the
  arrangement~$A$ of the overlay of the curves in~$\Gamma$ with~$G^*$.
  On each face~$f$ of~$A$, we store a list~$L(f)$ of pointers to the
  pieces incident to that face and having that face to their right.
  Thus, $f$ contains a tangency if and only if $|L(f)|\ge2$.
  Similarly, if $g$ is a face of~$A$ incident to~$f$ via an edge
  of~$G^*$, the union of $f$ and~$g$ contains a tangency if and only
  if $|L(f)\cup L(g)|\ge3$, or $|L(f)\cup L(g)|=2$ and the two
  corresponding pieces are not consecutive.  These properties can be
  tested in constant time.

  As we push the curves, we update the corresponding lists~$L(f)$.  At the
  start of the shifting, or once the curves have been pushed by one step,
  we first detect the tangencies within the same face~$f$, and deal with
  them, updating the lists $L(f)$.  At this step, there is at most one
  piece per face of~$G^*$.  For every piece of~$\Gamma$, we mark the edges
  incident to the face to the right of that piece; as soon as one edge is
  marked from both sides, and the two corresponding pieces are not
  consecutive, there is a tangency, which we handle immediately.  The
  running time for one tangency detection step is the total complexity of
  the faces that are incident to the curves, and to their right; the sum of
  these complexities is linear in~$n$.  (Note that we only care about the
  part of the surface that is to the right of the curves; the data
  structures involving faces of the remaining part of the surface are
  irrelevant.)

  \emph{Contractibility tests.}  Finally, to perform a contractibility test
  on two subcurves $\alpha$ and $\beta$, we perform a tandem search on the
  surfaces bounded by $\alpha$ and $\beta$, and stop as soon as we find a
  disk. If we find one, the complexity in the tandem search is at most
  twice the complexity of this disk, which is immediately discarded and
  never visited again. If we do not find a disk, the complexity is linear
  in $n$, but the shifting phase is over. Therefore, the total complexity
  of the contractibility tests is linear in the number of vertices swept by
  the shifting phase or in the disks, until the very last contractibility
  test, which takes time linear in $n$. In the end, the shifting phase
  takes time linear in~$n$, which concludes the complexity analysis.
\end{proof}
\section{Shortest Cellular Graphs with Prescribed Combinatorial
  Maps}\label{S:eric}

Guth, Parlier, and Young proved the following result:
\begin{theorem}[{\cite[Theorem~2]{gpy-pdrs-11}}]\label{T:gpy}
  For any $\varepsilon>0$, the following holds with probability tending to
  one as $n$ tends to~$\infty$: A random (trivalent, unweighted)
  cross-metric surface without boundary with $n$~vertices has no pants
  decomposition of length at most $n^{7/6-\varepsilon}$.
\end{theorem}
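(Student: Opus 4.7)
The plan is to establish the theorem by the probabilistic method: introduce a natural distribution on trivalent cross-metric surfaces with $n$ vertices, and show by a first-moment (union-bound) argument that the probability of admitting a pants decomposition of length at most $L = n^{7/6-\varepsilon}$ tends to zero.

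First I would fix the random model. Take $n$ abstract trivalent vertices, equivalently $3n/2$ half-edges, and produce a cross-metric surface by sampling uniformly at random a perfect matching on the half-edges together with a cyclic ordering at each vertex; dually this amounts to gluing $\Theta(n)$ triangles along a uniformly random matching of their edges. Standard random-surface results (Brooks--Makover, Pippenger--Schleich, Gamburd--Makover) imply that with probability tending to one the resulting surface is connected and has genus $g = \Theta(n)$, so conditioning on these events changes probabilities only by a constant factor and we may work under them.

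Next I would bound the expected number of pants decompositions of total length at most $L$. Each pants decomposition is an unordered family of $3g-3 = \Theta(n)$ disjoint simple closed curves in $G^*$. A simple closed curve of length $\ell$ is described by a starting edge and a cyclic sequence of $\ell$ turns at the trivalent vertices it visits, so there are at most $O(n \cdot 2^{\ell})$ combinatorial types of curve of length $\ell$; the number of compositions of $L$ into $3g-3$ ordered nonnegative parts is $\binom{L+3g-3}{3g-3}$, so the total number of combinatorial types of pants decompositions of length $\le L$ is bounded by $\exp(O(L + n \log(1 + L/n)))$. Against this, for any fixed combinatorial type, the probability that the random matching realizes the prescribed vertex-to-vertex adjacencies along all curves decays like $(\Theta(1/n))^{\Theta(L)}$, since each specific edge-identification in the random matching occurs with probability $\Theta(1/n)$.

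The main obstacle, where the exponent $7/6$ is actually pinned down, is that the raw entropy-versus-energy balance above only yields a weak bound of the form $L = \Omega(n^{1+o(1)})$ and does not reach $n^{7/6-\varepsilon}$. The Guth--Parlier--Young refinement exploits the additional structural constraints imposed by the requirement that the $3g-3$ curves be disjoint, simple, and collectively cut the surface into pairs of pants: a topological-isoperimetric estimate on random trivalent surfaces shows that such a short family of curves would force many small-area regions to appear near the curves, which a random surface almost surely does not possess (this is in the same spirit as the Cheeger-type expansion bounds known for random regular graphs). I would follow their strategy: stratify pants decompositions according to the isoperimetric profile of their constituent curves (length distributions and adjacency patterns), apply the first-moment method in each stratum, and aggregate via a union bound. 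Optimizing the entropy--energy trade-off over the strata yields the advertised threshold $L = n^{7/6-\varepsilon}$, and the hard technical work is in the isoperimetric lemma that suppresses the naive counting by the extra factor $n^{1/6}$.
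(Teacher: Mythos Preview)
This theorem is not proved in the present paper; it is quoted from Guth, Parlier, and Young and used as a black box. The paper does, however, prove a closely analogous statement for cut graphs with prescribed combinatorial map (Proposition~\ref{P:gpy-cutgraphs} and Theorem~\ref{T:gpy-cutgraphs}), and the method there follows the Guth--Parlier--Young template, so one can meaningfully compare your proposal to that.

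Your sketch has a genuine gap exactly where you flag ``the main obstacle.'' The exponent $7/6$ does not come from an isoperimetric or Cheeger-type expansion estimate on the random surface, nor from stratifying decompositions by an isoperimetric profile of the curves. The actual mechanism, both in Guth--Parlier--Young and in this paper's Proposition~\ref{P:gpy-cutgraphs}, is a direct combinatorial enumeration: one upper-bounds the number of (isomorphism classes of) cross-metric surfaces in~$\mathcal{S}(g,n)$ that could admit a decomposition of length at most~$L$, and compares that count to the total number of such surfaces, which is of order~$n^{n/2}$. The surface is reconstructed from the decomposition together with the restriction of~$G^*$ to each complementary piece. The pieces themselves contribute only $2^{O(n)}$ choices (planar trivalent graphs on $O(n)$ vertices); the dominant factor is the number of ways to distribute the total length~$L$ among the $\Theta(g)$ curves and to position them relative to the pieces --- essentially a weak-composition count of order $(L/g+1)^{\Theta(g)}$. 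With $g=\Theta(n)$ asymptotically almost surely, balancing $(L/n)^{\Theta(n)}$ against $n^{n/2}$ is precisely what yields the threshold $L\sim n^{7/6}$. No expansion property of the random surface enters.

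Your first-moment framing is fine in spirit, but the specific probability estimate ``each edge-identification occurs with probability $\Theta(1/n)$, hence $(\Theta(1/n))^{\Theta(L)}$ for a fixed type'' is also off: a pants decomposition of length~$L$ is a family of curves drawn on an already-built surface, and it does not pin down $\Theta(L)$ half-edge identifications of the random matching. So that step would not go through as written even before the missing refinement you defer to the ``isoperimetric lemma.''
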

In this statement, two cross-metric surfaces are regarded as equal if some
self-homeomorphism of the surface maps one to the other.  (Note that
vertices, edges, and faces are unlabeled.)  As a side remark, by a simple
argument, we are actually able to strengthen this result, by replacing, in
the statement above, ``pants decomposition'' by ``genus zero
decomposition''.  We defer the proof of this side result, independent of
the following considerations, to Appendix~\ref{A:genuszero}.

The main purpose of this section is to provide an analogous statement, not
for pants decompositions or genus zero decompositions, but for cut graphs
(or, actually, for arbitrary cellular graphs) with a prescribed
combinatorial map.  We essentially prove that, for any combinatorial
map~$M$ of any cellular graph embedding (in particular, of any cut graph)
of genus~$g$, there exists a (trivalent, unweighted) cross-metric
surface~$S$ with $n$ vertices such that any embedding of~$M$ on~$S$ has
length $\Omega(n^{7/6})$.  We are not able to get this result in full
generality, but are able to prove that it holds for infinitely many values
of~$g$.  On the other hand, the result is stronger since, as in
Theorem~\ref{T:gpy}, it holds ``asymptotically almost surely'' with respect
to the uniform distribution on unweighted trivalent cross-metric surfaces
with given genus and number of vertices.

Let $(S,G^*)$ be a cross metric surface without boundary, and $M$~a
combinatorial map on~$S$.  The \textit{$M$-systole} of $(S,G^*)$ is
the minimum among the lengths of all graphs embedded in~$(S,G^*)$ with
combinatorial map~$M$.  Given $g$ and~$n$, we consider the
set~$\mathcal{S}(g,n)$ of trivalent unweighted cross-metric
surfaces of genus~$g$, without boundary, and with $n$~vertices, where we
regard two cross-metric surfaces as equal if some self-homeomorphism of the
surface maps one to the other.  This refines the model introduced by Gamburd and
Makover~\cite{gm-grrs-02}.  Here is our precise result:
\begin{theorem}\label{T:gpy-cutgraphs}
  Given strictly positive real numbers $p$ and~$\epsilon$, and integers
  $n_0$ and~$g_0$, there exist $n\ge n_0$ and $g\ge g_0$ such that, for any
  combinatorial map~$M$ of a cellular graph embedding with genus~$g$, with
  probability at least $1-p$, a cross-metric surface chosen uniformly at
  random from $\mathcal{S}(g,n)$ has $M$-systole at least
  $n^{7/6-\varepsilon}$.
\end{theorem}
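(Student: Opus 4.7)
The plan is to extend the probabilistic argument of Guth, Parlier, and Young (Theorem~\ref{T:gpy}) from pants decompositions to embeddings of the prescribed combinatorial map~$M$, via a first-moment method over the uniform distribution on $\mathcal{S}(g,n)$. A uniformly random element of $\mathcal{S}(g,n)$ is sampled by a random gluing: take $n$ abstract trivalent corners with labeled half-edges, pair the $3n$ half-edges uniformly at random, and condition the resulting cross-metric surface to have genus exactly~$g$. Write $\mathbb{P}$ for the unconditional gluing law and $\mathbb{P}_{g,n}$ for the conditional law on $\mathcal{S}(g,n)$. Fix a combinatorial map $M$ of a cellular graph embedding of genus~$g$, set $L := n^{7/6-\varepsilon}$, and aim to show that for infinitely many admissible pairs $(g,n)\ge(g_0,n_0)$ the $\mathbb{P}_{g,n}$-expected number of embeddings of $M$ of length at most $L$ is less than $p$; the theorem then follows by Markov's inequality.

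An embedding of $M$ on $(S,G^*)$ is encoded up to $G^*$-preserving isotopy by a combinatorial \emph{embedding pattern}: for each vertex of $M$, the face of $G^*$ containing it together with the cyclic order of the incident arcs; for each edge of $M$, the walk in the dual graph of $G^*$ listing the $G^*$-edges it crosses. Since $G^*$ is trivalent, each step of a walk has at most two continuations, so by a careful accounting---encoding the portion of $M$ lying inside each face of $G^*$ by its planar combinatorial type (of which a disk with $k$ boundary arcs admits only $O(4^k)$) rather than by its raw vertex count---the number of patterns of length at most $L$ is bounded by $C_1^{L+f(g)}$, where $f$ depends only on~$g$. For a fixed pattern, realization on a random gluing requires a specified sequence of $\Omega(L)$ pairings among the $3n$ half-edges, whose joint probability is at most $(C_2/n)^{\Omega(L)}$ by standard uniform-matching estimates.

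Combining these via a union bound, the $\mathbb{P}$-expected number of length-$\le L$ embeddings of $M$ is at most $C_1^{L+f(g)}\cdot(C_2/n)^{\Omega(L)}$, which tends to zero for $L=n^{7/6-\varepsilon}$ and $n$ sufficiently large relative to $f(g)$ (both $n\ge n_0$ and $g\ge g_0$ being parameters we are free to increase). Transferring from $\mathbb{P}$ to $\mathbb{P}_{g,n}$ costs a factor $1/\mathbb{P}[\text{genus}=g]$, which for pairs $(g,n)$ near the concentration regime of the random gluing's genus distribution is at most polynomial in~$n$ and is absorbed by the exponential slack. The main obstacles are the combinatorial counting in the presence of maps $M$ with arbitrarily large vertex count (handled by the planar-type encoding that separates the in-face structure from the walk structure), and the genus conditioning, which requires invoking existing estimates on the genus distribution of random trivalent gluings to extract an infinite family of pairs $(g,n)$ satisfying the hypotheses of the theorem.
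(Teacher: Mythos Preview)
Your first-moment scheme breaks at the probability estimate. You assert that realizing a fixed pattern forces $\Omega(L)$ pairings among the $3n$ half-edges, with joint probability $(C_2/n)^{\Omega(L)}$. But the random gluing involves only $3n/2$ pairings in total, so any event has probability at least $1/(3n-1)!!=n^{-(3/2+o(1))n}$; no event can have probability as small as $n^{-\Omega(L)}$ once $L\gg n$. In the regime of interest $L=n^{7/6-\varepsilon}\gg n$, so your bound is simply impossible. With the true lower bound on the realization probability, the union bound becomes $C_1^{\,L}\cdot n^{-O(n)}$, which diverges. The walk-based encoding also has a secondary flaw: your ``at most two continuations'' assertion confuses the two duals. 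Between consecutive crossings of edges of~$G^*$ the curve sits in a \emph{face} of~$G^*$, whose number of sides equals the degree of the corresponding triangulation vertex and is unbounded (for random gluings near the generic genus $g\approx n/4$ there are only $O(\log n)$ faces, each with $\Theta(n/\log n)$ sides).

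The paper's argument is not a walk count at all. It bounds directly the number of \emph{surfaces} in~$\mathcal S(g,n)$ admitting an $M$-embedding of length $\le L$: cut $S$ along a shortest such embedding~$C$ and look at the overlay of $G^*$ and~$C$ as a planar graph~$H$ in the resulting disk. From~$H$ (plus one marked half-edge) one reconstructs the surface, so it suffices to count the possible~$H$'s. After an exchange argument forcing the chord structure of~$H$ to be essentially laminar, $H$ splits into (i)~a planar trivalent graph on $\le n$ vertices, of which there are only $2^{O(n)}$, and (ii)~a system of parallel chords on an $O(g)$-gon, of which there are $(L/g+1)^{O(g)}$. The product $2^{O(n)}L(L/g+1)^{12g-9}$ is, for $L=n^{7/6-\varepsilon}$ and $g=\Theta(n)$, at most $n^{(1-\varepsilon)n/2}$, which is small against the $\sim n^{n/2}$ total surfaces. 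The key idea your sketch is missing is precisely this ``cut open and count planar trivalent graphs'' step, which replaces the hopeless $C^L$ walk count by a $2^{O(n)}$ count.
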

We can obtain a similar result in the case of polyhedral triangulations,
namely, metric spaces obtained by gluing $n$~equilateral Euclidean
triangles with sides of unit length.  We first note that an element of
$\mathcal{S}(g,n)$ naturally corresponds to a polyhedral triangulation by
gluing equilateral triangles of unit side length on the vertices. The
notion of $M$-systole is defined similarly in this setting, and we now
prove that Theorem~\ref{T:gpy-cutgraphs} implies an analogous result for
polyhedral triangulations:

\begin{theorem}\label{T:gpy-cutgraphs-polyhedral}
  Given strictly positive real numbers $p$ and~$\epsilon$, and integers
  $n_0$ and~$g_0$, there exist $n\ge n_0$ and $g\ge g_0$ such that, for any
  combinatorial map~$M$ of a cellular graph embedding with genus~$g$, with
  probability at least $1-p$, a polyhedral triangulation chosen uniformly
  at random from $\mathcal{S}(g,n)$ has $M$-systole at least
  $n^{7/6-\varepsilon}$.
\end{theorem}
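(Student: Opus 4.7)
The plan is to deduce Theorem~\ref{T:gpy-cutgraphs-polyhedral} from Theorem~\ref{T:gpy-cutgraphs} via a geometric reduction lemma that compares the two notions of $M$-systole on the same underlying element of~$\mathcal{S}(g,n)$. Concretely, I would prove that there is a universal constant $C$ such that, for every triangulation in~$\mathcal{S}(g,n)$ and every combinatorial map~$M$, the cross-metric $M$-systole is at most $C$ times the polyhedral $M$-systole, plus an additive term of the order of the number of edges of~$M$. Once this is established, applying Theorem~\ref{T:gpy-cutgraphs} with a slightly smaller exponent and suitably adjusted thresholds gives that, with probability at least~$1-p$, the cross-metric $M$-systole is at least $n^{7/6-\varepsilon/2}$; this forces the polyhedral $M$-systole to exceed $n^{7/6-\varepsilon}$, since the number of edges of~$M$ is at most $O(g)=O(n)$ and is therefore negligible compared to $n^{7/6-\varepsilon/2}$ once $n$ is large.

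To prove the reduction, I would take an arbitrary polyhedral embedding $\phi$ of~$M$ with Euclidean length~$L$ and replace each of its edges, in its isotopy class relative to its endpoints and to the rest of the embedding, by a polyhedral geodesic. This produces an isotopic embedding $\phi'$ of total length at most~$L$, whose edges are piecewise linear inside the equilateral triangulation and cross shared edges with equal incoming and outgoing angles. The core estimate is then an unfolding argument: for a geodesic edge of length~$\ell$, successively reflecting the triangles it traverses across their shared edges lays the whole strip down into the plane tiled by equilateral triangles and turns the geodesic into a straight segment of length~$\ell$. Since the edges of that planar tiling form three families of parallel lines with consecutive lines at distance~$\sqrt{3}/2$, a straight segment of length~$\ell$ crosses at most $2\sqrt{3}\,\ell+3$ of them. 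Summing over all edges of~$\phi'$ bounds its cross-metric length by $2\sqrt{3}\,L$ plus a term linear in the number of edges of~$M$, after a generic perturbation that puts the embedding in regular position with respect to~$G^*$.

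The hardest step is the simultaneous geodesic straightening: replacing edges one at a time can create crossings between edges of~$\phi'$, destroying the embedding property. I would handle this either via a graph curve-shortening flow that preserves disjointness and converges (up to subsequence) to a locally geodesic embedding, or, more elementarily, by working with a minimizing sequence of piecewise linear embeddings whose total length tends to the polyhedral $M$-systole; the unfolding estimate then applies to each term of the sequence, and a final perturbation places the limit transversely to~$G^*$. A minor technical point is to verify that the unfolding is not obstructed by cone points of the polyhedral metric (vertices of~$G$ whose total incident angle differs from~$2\pi$), but since polyhedral geodesics by definition never pass through cone points, a generic choice of straightening avoids them.
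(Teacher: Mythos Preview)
Your overall strategy matches the paper's: both reduce Theorem~\ref{T:gpy-cutgraphs-polyhedral} to Theorem~\ref{T:gpy-cutgraphs} via a comparison lemma bounding the cross-metric $M$-systole by a constant times the polyhedral $M$-systole plus~$O(g)$. The difference lies entirely in how that comparison lemma is proved.

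The paper does not straighten to geodesics. Instead, it takes the given polyhedral embedding~$C$, builds a tubular neighborhood of~$G$ out of small disks (around vertices) and thin strips (along edges), and pushes~$C$ into this neighborhood triangle by triangle, according to how many sides of the triangle each piece of~$C$ touches: side-number-one pieces are pushed across their side, side-number-two pieces are pushed into the two adjacent strips (at most doubling their length, plus an additive term for pieces containing a vertex of~$C$), and the at most one side-number-three piece per triangle is pushed into all three strips. One then draws a short transversal~$P_s$ across each strip, removes bigons, and observes that every remaining crossing of~$C'$ with some~$P_s$ corresponds to a strip-crossing of length at least~$1-\delta$. This is elementary and never requires the embedding to be locally geodesic.

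Your route---straighten, unfold, count crossings with the three line families of the triangular grid---is attractive, but the step you yourself flag as hardest is not actually supplied. Replacing each edge by a geodesic in its homotopy class rel endpoints can create crossings among edges, and neither of your two proposed remedies closes the gap as written. The minimizing-sequence variant fails because the unfolding estimate (a straight segment of length~$\ell$ meets at most $2\sqrt{3}\,\ell+3$ grid lines) holds only \emph{after} unfolding produces a straight segment, i.e., only for geodesic arcs; an arbitrary PL embedding in your sequence is not geodesic, and a short arc passing near a vertex of~$G$ can cross arbitrarily many edges while having arbitrarily small Euclidean length, so the estimate is simply false for those terms. The curve-shortening route can in principle be made rigorous---simultaneous tightening of an embedded graph to a geodesic representative on a cone surface is available via disk-flow or Hass--Scott type results---but you would need to invoke such a theorem explicitly and check it covers graphs (not just disjoint curve systems) on polyhedral surfaces, including behavior at cone points of total angle at least~$2\pi$, where shortest paths can legitimately pass through the cone. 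In short: the reduction is the right idea and the unfolding count is a nice alternative viewpoint, but the paper's tubular-neighborhood pushing reaches the conclusion without the delicate simultaneous-straightening step that your argument leaves open.
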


\subsection{Proof of Theorem~\ref{T:gpy-cutgraphs}}

\begin{figure}
\centering                                                                
 \begin{tabular}{ccc}
    \includegraphics[width=.25\linewidth]{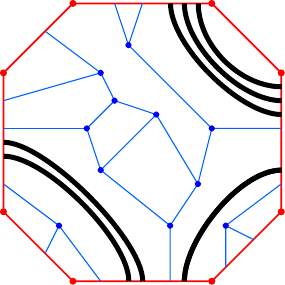} &
    \includegraphics[width=.25\linewidth]{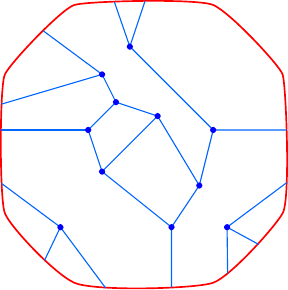}&
    \includegraphics[width=.25\linewidth]{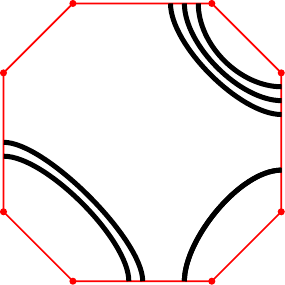}\quad\  \\
    a. & b. & c. \\
  \end{tabular}
  \caption{a. The graph~$H$, obtained after cutting~$\surf$ open
    along~$C$.  The vertices in~$B$ (on the outer face) and the vertices
    of~$G^*$ (not on the outer face) are shown. The chords are in thick
    (black) lines.  b. The graph~$H_1$.  c. The graph~$H_2$.}
  \label{F:count-schemas}
\end{figure}
The general strategy of the proof of Theorem~\ref{T:gpy-cutgraphs} is
inspired by Guth, Parlier and Young~\cite{gpy-pdrs-11}, who proved a
related bound for pants decompositions; however, the details of the method
are rather different.  Our main tool is the following proposition.
\begin{proposition}\label{P:gpy-cutgraphs}
  Given integers $g$, $n$, and~$L$, and a combinatorial map~$M$ of a
  cellular graph embedding of genus~$g$, at most
  \[f(g,n,L)=2^{O(n)}L\left(L/g+1\right)^{12g-9}\] cross-metric surfaces
  in~$\mathcal{S}(g,n)$ have $M$-systole at most~$L$.
\end{proposition}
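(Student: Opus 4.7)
The plan is to bound, for the fixed map $M$, the number of pairs $(S,\phi)$ where $S\in\mathcal{S}(g,n)$ has $M$-systole at most $L$ and $\phi$ is a length-realizing embedding of $M$ on $S$ with $|\phi|\le L$. Since each such surface contributes at least one pair, an upper bound on the pair count gives the claimed bound on the number of surfaces with small $M$-systole. The overall strategy is modeled on the counting argument of Guth, Parlier, and Young in the proof of Theorem~\ref{T:gpy}, adapted from pants decompositions to embeddings of a prescribed combinatorial map.

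A pair $(S,\phi)$ is completely determined by two pieces of data. The \emph{metric data} is the tuple $(\ell_e)_{e\in E_M}$ where $\ell_e=|\phi(e)|$ and $\sum_e\ell_e\le L$. The \emph{combinatorial data} comes from cutting $S$ along $\phi(M)$: by cellularity of $M$ the complement is a disjoint union of $F_M$ polygonal disks $D_1,\ldots,D_{F_M}$, each carrying a trivalent planar subgraph of $G^*$ whose leaves sit on $\partial D_f$ in positions dictated by the $\ell_e$'s of the bounding edges. Gluing these decorated disks along $M$'s edge-identifications recovers $(S,\phi)$ uniquely, so counting the data suffices.

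For the combinatorial data, the number of trivalent plane subgraphs that can live inside a single disk $D_f$ is bounded by a standard estimate, polynomial in the boundary length $\ell_f$ and exponential in the number $n_f$ of internal vertices; using $\sum_fn_f=n$ and absorbing the polynomial-in-$L$ overhead into the metric-data factor, the product over faces totals $2^{O(n)}$. For the metric data, the naive count $\sum_{L'\le L}\binom{L'+E_M-1}{E_M-1}$ is too coarse when $E_M$ is large, so I would first reduce $M$ to a normal form by suppressing degree-two vertices, contracting pendant trees, and performing analogous simplifications on the dual, producing an irreducible cellular skeleton whose essential length parameters number at most $12g-9$ as a consequence of Euler's formula combined with primal and dual degree constraints. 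The standard composition estimate $\binom{L+k}{k}\le(eL/k+e)^k$ with $k=12g-9$, summed over the possible total lengths $L'\le L$, then yields at most $L\cdot(L/g+1)^{12g-9}$ metric configurations. Multiplying the two estimates gives $f(g,n,L)=2^{O(n)}L(L/g+1)^{12g-9}$.

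The principal obstacle is verifying the exact exponent $12g-9$: this requires a careful, case-based analysis showing that the irreducible cellular skeleton of $M$ has at most $12g-9$ length-bearing arcs (or equivalently, that distributing length among the edges of $M$ can be recoded via at most $12g-9$ independent parameters), and that the reduction from $M$ to this skeleton is lossless with respect to embeddings of a given length. A secondary subtlety is ensuring that the $\ell_f$-dependent factors in the combinatorial data count can be genuinely absorbed into the metric-data factor without blowing up the constants in $2^{O(n)}$.
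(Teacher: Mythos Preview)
Your overall framework—count pairs $(S,\phi)$ and reconstruct the surface from pieces—matches the paper's, but your separation into ``metric'' and ``combinatorial'' data has a real gap. The claim that the number of trivalent plane graphs inside a face $D_f$ is ``polynomial in the boundary length $\ell_f$ and exponential in~$n_f$'' is false: already when $n_f=0$ the graph is a non-crossing perfect matching of the $\ell_f$ boundary points, and there are Catalan-many of these, hence exponentially many in~$\ell_f$. Since $\sum_f\ell_f=2L$, your combinatorial-data count is really $2^{O(n+L)}$, not $2^{O(n)}$, and the argument collapses once $L\gg n$. Relatedly, your proposed ``irreducible skeleton with at most $12g-9$ length-bearing arcs'' does not exist: a trivalent cut graph has only $6g-3$ edges, so the edge-length tuple alone accounts for at most a $(L/g+1)^{O(g)}$ factor with the wrong constant, and no amount of primal/dual simplification of~$M$ produces $12g-9$ independent length parameters.

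The idea you are missing is precisely to isolate the problematic \emph{chords}—edges of $G^*$ with both endpoints on the boundary of the cut-open disk—and to control them via minimality of the embedding. The paper first reduces $M$ to a trivalent cut graph (one face), cuts to a polygon with at most $12g-6$ sides, and then uses an exchange argument (under a generic perturbation of the crossing weights) to show that for a \emph{shortest} embedding no chord has both endpoints on the same side and parallel chords between a given pair of sides are consecutive. This forces the entire chord arrangement to be encoded by a weak composition of at most~$L$ into $12g-9$ parts, one per diagonal of a triangulation of the $(12g-6)$-gon; that is the true source of the exponent $12g-9$. Once the chords are stripped off, the remaining planar graph has $O(n)$ vertices and genuinely contributes only $2^{O(n)}$. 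Without this minimality/chord analysis, the combinatorial count cannot be decoupled from~$L$.
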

\begin{proof}
  First, note that it suffices to prove the result for cut graphs with
  minimum degree at least three.  Indeed, one can transform any cellular
  graph embedding into such a cut graph by removing edges, removing
  degree-one vertices with their incident edges, and \emph{dissolving}
  degree-two vertices, namely, removing them and replacing the two incident
  edges with a single one.  For a combinatorial map $M$ with minimum degree
  at least three, Euler's formula and double-counting immediately imply
  that $M$~has at most $4g-2$~vertices and $6g-3$~edges. Given a
  cross-metric surface $(\surf,G^*)$ in~$\mathcal{S}(g,n)$, let $C$ be a
  cut graph of genus~$g$ with combinatorial map $M$ and length at most~$L$.

  Let $H'$ be the graph that is the overlay of $G^*$ and~$C$.
  Cutting~$\surf$ along~$C$ yields a topological disk~$D$, and
  transforms~$H'$ into a connected graph~$H$
  (Figure~\ref{F:count-schemas}(a)) embedded in the plane, where the outer
  face corresponds to the copies of the vertices and edges of the cut
  graph~$C$.  The set~$B$ of vertices of degree two on the outer face
  of~$H$ exactly consists of the copies of the vertices of~$C$; there are
  at most~$12g-6$ of these. A \emph{side} of~$H$ is a path on the boundary
  of~$D$ that joins two consecutive points in~$B$.

  Given the combinatorial map of~$H$ in the plane, we can (almost) recover
  the combinatorial maps corresponding to $H'$ and to $(\surf,G^*)$.
  Indeed, the set~$B$ of vertices of degree two on the outer face of~$H$
  determines the sides of~$D$.  The correspondence between each side of~$D$
  and each edge of the combinatorial map~$M$ is completely determined once
  we are given the correspondence between a single half-edge on the outer
  face of~$H$ and a half-edge of~$M$; in turn, this determines the whole
  gluing of the sides of~$H$ and completely reconstructs $H'$ with
  $C$~distinguished.  Finally, to obtain $G^*$, we just ``erase''~$C$.
  Therefore, one can reconstruct the combinatorial map corresponding to the
  overlay~$H'$ of~$G^*$ and~$C$, just by distinguishing one of the $O(L)$
  half-edges on the outer face of~$H$.

  \begin{figure}\centering
    \begin{tabular}{cc}
    \includegraphics[width=.25\linewidth]{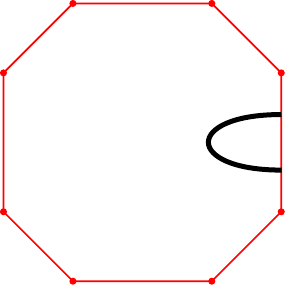} &
    \includegraphics[width=.25\linewidth]{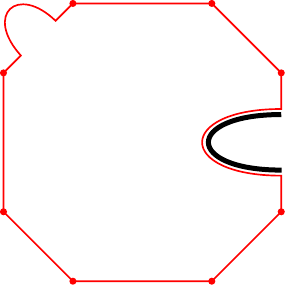}\\
    a. & b.\\
  \end{tabular}
    \caption{The exchange argument to prove~(i).}
    \label{F:count-schemas-detail-1}
  \end{figure}

  A \emph{chord} of~$H$ is an edge of~$H$ that is not incident to the outer
  face but connects to vertices incident to the outer face.  Two chords are
  \emph{parallel} if their endpoints lie on the same pair of sides of~$D$.
  We claim that we can assume the following:
  \begin{enumerate}
  \item[(i)] no chord has its endpoints on the same side of~$H$
    (Figure~\ref{F:count-schemas-detail-1}(a) shows an example not
    satisfying this property);
  \end{enumerate}
  and that (at least) one of the two following conditions holds:
  \begin{enumerate}
  \item[(ii)] the subgraph of~$H$ between any two parallel chords only
    consists of other parallel chords
    (Figure~\ref{F:count-schemas-detail-2}(a) shows an example not
    satisfying this property), or
  \item[(ii')] there are two parallel chords such that the subgraph of~$H$
    between them contains all the interior vertices of~$H$.
  \end{enumerate}
  Indeed, without loss of generality, we can assume that our cut graph~$C$
  has minimum length among all cut graphs of~$(\surf,G^*)$ with
  combinatorial map~$M$.  If a chord violates~(i), one could shorten the
  cut graph by sliding a part of the cut graph over the chord
  (Figure~\ref{F:count-schemas-detail-1}), which is a contradiction.  

  \begin{figure}\centering
    \inkfrag{c1}{$c_1$} \inkfrag{c2}{$c_2$} \inkfrag{p1}{$p_1$}
    \inkfrag{p2}{$p_2$}
    \begin{tabular}[c]{ccc}
    \raisebox{-6.2mm}{%
  \begin{inkfragenv}\def\svgwidth{.25\linewidth}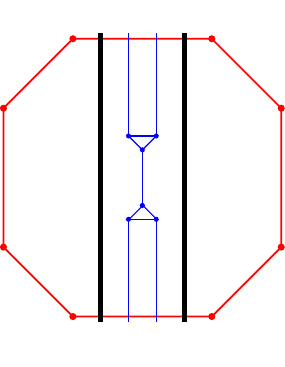\end{inkfragenv}%
} &
  \begin{inkfragenv}\def\svgwidth{.25\linewidth}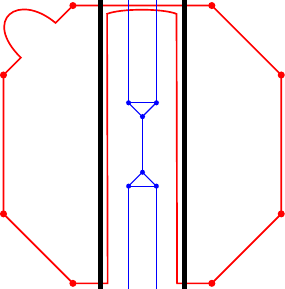\end{inkfragenv}%
 &
    \raisebox{-6.7mm}{%
  \begin{inkfragenv}\def\svgwidth{.31\linewidth}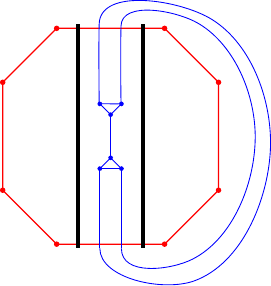\end{inkfragenv}%
} \\
    a. & b. & c.\\
  \end{tabular}
  \caption{a.: Two chords violating~(ii).  b.: The exchange argument, in
    case $p_1$ and~$p_2$ have different perturbed lengths.  c.: A schematic
    view of the situation, in case $p_1$ and~$p_2$ have the same perturbed
    length.}
  
  \label{F:count-schemas-detail-2}
  \end{figure}
  
  For (ii) and (ii'), the basic idea is to use a similar exchange argument
  as to prove~(i), but we need a perturbation argument as
  well. Specifically, let us temporarily perturb the crossing weights of
  the edges of~$G^*$ as follows: The weight of each edge~$e$ of~$G^*$
  becomes~$1+w_e$, where the $w_e$'s are real numbers that are linearly
  independent over~$\Q$ (e.g., independent and identically distributed
  random) and strictly between $0$ and~$1/L$.  Let $C$ be a shortest
  embedded graph with combinatorial map $M$ under this perturbed metric.

  It is easy to see that $C$ is also a shortest embedding with
  combinatorial map $M$ under the unweighted metric: Indeed, two cut graphs
  $C_1$ and~$C_2$ with respective (integer) lengths $\ell_1<\ell_2\le L$ in
  the unweighted metric have respective lengths $\ell'_1<\ell'_2$ in the
  perturbed metric, since the perturbation increases the length of each
  edge by less than~$1/L$.

  We claim that either (ii) or~(ii')~holds for this choice of~$C$.  Assume
  that (ii)~does not hold; we prove that (ii')~holds.  So the region~$R$
  of~$D$ between two parallel chords $c_1$ and~$c_2$ of~$D$ contains
  internal vertices; without loss of generality (by~(i)), assume that the
  region~$R$ contains no other chord in its interior.  Let $p_1$ and~$p_2$
  be the two subpaths of the cut graph on the boundary of~$R$.  If $p_1$
  and~$p_2$ have different lengths under the perturbed metric, e.g., $p_1$
  is shorter, then we can push the part of~$p_2$ to let it run along~$p_1$
  and shorten the cut graph (Figure~\ref{F:count-schemas-detail-2}(b)),
  which is a contradiction.  Therefore, $p_1$ and~$p_2$ have the same
  length under the perturbed metric, which implies that they cross exactly
  the same set~$E$ of edges of~$G^*$, since the weights are linearly
  independent over~$\Q$.  (We exclude from~$E$ the edges on the endpoints
  of~$p_1$ and~$p_2$.)  Since none of the edges in~$E$ are chords, all the
  endpoints of the edges in~$E$ belong to ~$R$
  (Figure~\ref{F:count-schemas-detail-2}(c)), which implies~(ii') by
  connectivity of~$G^*$. This concludes the proof of the claim.

  We now estimate the number of possible combinatorial maps for~$H$, by
  ``splitting'' it into two connected plane graphs $H_1$ and $H_2$,
  estimating all possibilities of choosing each of these graphs, and
  estimating the number of ways to combine them.

  Let $H_1$ be the graph (see Figure~\ref{F:count-schemas}(b)) obtained
  from~$H$ by removing all chords and dissolving all degree-two vertices
  (which are either in~$B$ or endpoints of a chord).  $H_1$ is connected,
  trivalent, and has at most~$n$ vertices not incident to the outer face,
  so $O(n)$ vertices in total.  By a classical calculation (see for
  example~\cite[Lemma~4]{gpy-pdrs-11}), there are thus $2^{O(n)}$ possible
  choices for the combinatorial map of this planar trivalent graph~$H_1$.

  On the other hand, let $H_2$ be the graph (see
  Figure~\ref{F:count-schemas}(c)) obtained from~$H$ by removing internal
  vertices together with their incident edges and dissolving all degree-two
  vertices not in~$B$.  Since the chords are non-crossing and connect
  distinct sides of~$D$, the pairs of sides connected by at least one chord
  form a subset of a triangulation of the polygon having one vertex per
  side of~$D$.  To describe~$H_2$, it therefore suffices to describe a
  triangulation of this polygon with at most $12g-6$ edges, which makes
  $2^{O(g)}=2^{O(n)}$ possibilities, and to describe, for each of the
  $12g-9$ edges of the triangulation, the number of parallel chords
  connecting the corresponding pair of sides.  Since there are at most
  $L$~chords, the number of possibilities for these numbers equals
  $\Setbar{(x_1,\ldots,x_{12g-9})}{x_i\ge0, \sum_ix_i\le L}$, which is the
  number of weak compositions of $L$ into $12g-8$ parts, namely
  \[{L+12g-9\choose 12g-9} \leq
  \left(\frac{e(L+12g-9)}{12g-9}\right)^{12g-9}=O\left((L/g+1)^{12g-9}\right)\times
  2^{O(n)},\]%
  the inequality being standard (or following from Stirling's formula).

  Finally, in how many ways can we combine given $H_1$ and~$H_2$ to
  form~$H$?  Let us first assume that (ii)~holds; the parallel chords
  joining the same pair of sides are consecutive, so choosing the position
  of a single chord fixes the position of the other chords parallel to it.
  Therefore, given $H_1$, we need to count in how many ways we can insert
  the $O(g)$ vertices of~$B$ on~$H_2$ into~$H_1$, and similarly the $O(g)$
  intervals where endpoints of chords can occur, respecting the cyclic
  ordering.  After choosing the position of a distinguished vertex
  of~$H_2$, we have to choose $O(g)$ positions on the edges of the boundary
  of~$H_1$, possibly with repetitions, which leaves us with
  $\binom{O(n+g)}{O(g)}\le2^{O(n+g)}=2^{O(n)}$ possibilities.  In
  case~(ii') holds, a very similar argument gives the same result. 

  The claimed bound follows by multiplying the number of all possible
  choices above: there are $O(L)$ choices for the distinguished half-edge
  of the outer face of~$H$, $2^{O(n)}$ choices for~$H_1$,
  $O\left((L/g+1)^{12g-9}\right)\times 2^{O(n)}$ choices
  for~$H_2$, and $2^{O(n)}$ possibilities for combining $H_1$ and~$H_2$.
\end{proof}

\begin{proof}[Proof of Theorem~\ref{T:gpy-cutgraphs}]
  Let $g_0,n_0,p,\varepsilon$ be as indicated.  Euler's formula implies
  that a cross-metric surface with $n$~vertices has genus $g\le(n+2)/4$.
  We now show that, if $n$~is large enough, \[\sum_{g=g_0}^{(n+2)/4}
  f(g,n,n^{7/6-\varepsilon})\le n^{(1-\varepsilon)n/2}.~(*)\] Indeed, by
  Proposition~\ref{P:gpy-cutgraphs} we have \[f(g,n,n^{7/6-\varepsilon}) \le
  2^{C_0n}\left(n^{7/6-\varepsilon}/g+1\right)^{12g-9}\] for some
  constant~$C_0$.  We need to sum up these terms from $g=g_0$ to~$(n+2)/4$.
  For $n$~large enough, the largest term in this sum is for $g=(n+2)/4$.
  Thus the desired sum is bounded from above by
  \[n2^{C_0n} \left(4n^{1/6-\varepsilon}+1\right)^{12(n+2)/4-9},\] which is
  at most $2^{C_1n} n^{(1/6-\varepsilon) 3n}$ (for $n$ large enough, for
  some constant~$C_1$), which in turn is at most $n^{(1-\varepsilon)n/2}$
  for $n$~large enough.

  Furthermore, let $h(g,n)=|\mathcal{S}(g,n)|$ be the number of (connected)
  cross-metric surfaces with genus~$g$ and $n$~vertices.  We have
  $\sum_{g=0}^{(n+2)/4}h(g,n)\ge e^{Cn}n^{n/2}$ if $n$~is large enough and
  even, for some absolute constant~$C$; this is probably folklore, and we
  provide a proof, deferred to Lemma~\ref{L:count-connected}.  But, if
  $g$~is fixed, $h(g,n)=O(e^{C'n})$ for some constant
  $C'$~\cite[Lemma~4]{gpy-pdrs-11}.  Thus, since $g_0$ is fixed, there is a
  constant~$C''$ such that, for $n$ large enough and even,
  $\sum_{g=g_0}^{(n+2)/4}h(g,n)\ge e^{C''n} n^{n/2}$~(**).

  Choose any (even)~$n\ge n_0$ such that $n^{-\varepsilon n/2}e^{-C''n}\le
  p$ and such that (*) and~(**) hold.  Thus, we have
  \[\sum_{g=g_0}^{(n+2)/4}f(g,n,n^{7/6-\varepsilon}) \leq
  p \sum_{g=g_0}^{(n+2)/4}h(g,n),\] %
  which implies that for some~$g\ge g_0$,
  \[f(g,n,n^{7/6-\varepsilon})/h(g,n) \le p\] and the denominator is
  non-zero.  In other words, among all $h(g,n)$~cross-metric surfaces with
  genus~$g$ and $n$~vertices, for any combinatorial map~$M$ of a cellular
  graph embedding of genus~$g$, a fraction at most~$p$ of these surfaces
  have an embedding of~$M$ with length at most~$n^{7/6-\varepsilon}$.
\end{proof}

We remark that a tighter estimate on the number~$h(g,n)$ of triangulations
with $n$~triangles of a surface of genus~$g$ could lead to the same result
\emph{for any large enough~$g$}, instead of \emph{for infinitely many
  values of~$g$}.

To conclude the proof, there remains to prove the bound on the number of
connected surfaces.

\begin{lemma}\label{L:count-connected}
  The number of (trivalent, unweighted) \emph{connected} cross-metric
  surfaces with $n$~vertices without boundary is, for $n$~even large
  enough, at least $e^{Cn}n^{n/2}$ for some absolute constant~$C$.
\end{lemma}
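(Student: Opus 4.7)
The plan is to count all (not necessarily connected) trivalent combinatorial maps via the configuration model, and then argue that a positive fraction of them are connected.

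First, I would count labeled combinatorial maps. On $n$ labeled vertices, each carrying three half-edges, a perfect matching of the $3n$ half-edges produces a labeled cubic multigraph; there are $(3n-1)!! \sim \sqrt{2}\,(3n/e)^{3n/2}$ such matchings by Stirling. Each labeled combinatorial map (cubic multigraph plus cyclic order at each vertex) is obtained from such a matching by fixing a cyclic order convention on the three half-edges at each vertex, and each map is over-counted by $3^n$ (one matching per cyclic relabeling at each vertex). Multiplying by $2^n$ for the two possible cyclic orders per vertex, the number of labeled combinatorial maps satisfies
\[
L_n \;\geq\; c_0 \,\frac{2^n\,(3n-1)!!}{6^n} \;\sim\; c_0\sqrt{2}\cdot 3^{n/2}\,n^{3n/2}\,e^{-3n/2}
\]
for some constant $c_0>0$, after accounting for low-order corrections from loops and parallel edges.

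Next, I would show that the fraction of labeled maps whose underlying cubic multigraph is connected is bounded below by a positive constant (and in fact tends to one). Connectedness depends only on the underlying graph, and in the configuration model the probability that a specific subset $S$ of $k$ vertices is isolated (no edge leaving $S$) equals $(3k-1)!!\,(3(n-k)-1)!!/(3n-1)!!$ if $3k$ is even, and zero otherwise; Stirling bounds this by $O((k/n)^{3k/2})$. Summing against $\binom{n}{k}$ over $k \ge 4$ yields $o(1)$, so the number $C_n$ of connected labeled maps satisfies $C_n \ge L_n/2$ for $n$ large. Alternatively, one can cite the classical result of Wormald that a random cubic graph is connected asymptotically almost surely.

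Finally, I would pass from labeled to unlabeled: since every unlabeled combinatorial map has at most $n!$ labelings, the number of unlabeled connected maps is at least $C_n/n!$. Combining with Stirling for $n!$ gives
\[
\frac{C_n}{n!} \;=\; \Omega\!\left(\frac{(3/e)^{n/2}\,n^{n/2}}{\sqrt{n}}\right),
\]
and since $3/e>1$ this dominates $e^{Cn}\,n^{n/2}$ for every constant $C < \tfrac12 \log(3/e)$ and $n$ sufficiently large, the $1/\sqrt{n}$ being absorbed into the exponential. The main obstacle is the configuration-model isolation estimate in step two; the remaining steps are routine Stirling calculations.
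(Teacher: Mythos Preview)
Your approach is correct. The paper takes a slightly different route: rather than counting labelled \emph{maps} directly, it counts simple unlabelled trivalent \emph{graphs}, citing Read for the asymptotic number of labelled simple cubic graphs and McKay--Wormald for the facts that almost all of them have trivial automorphism group (to pass to unlabelled) and are $3$-connected (hence connected). It then injects connected simple cubic graphs into connected cross-metric surfaces by assigning each graph an arbitrary cellular embedding, noting that distinct graphs yield distinct surfaces. Your configuration-model count of vertex-labelled maps, followed by the crude bound $|\text{unlabelled}|\ge|\text{labelled}|/n!$, is more self-contained: it sidesteps the automorphism citation entirely, at the price of a weaker (but perfectly sufficient) constant~$C$.

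Two small points to tidy up. First, your derivation of~$L_n$ is muddled: once you fix the rotation $\sigma$ on the $3n$ half-edges, the $(3n-1)!!$ matchings already enumerate dart-labelled oriented maps, and dividing by the~$3^n$ rotation-preserving relabellings gives the lower bound for vertex-labelled maps directly---the separate factors $2^n$ and $6^n$ (which cancel to~$1/3^n$ anyway) and the remark about ``corrections from loops and parallel edges'' are unnecessary, since for maps no such correction arises. Second, in the connectedness estimate the sum over isolated sets must include $k=2$ (a pair of vertices joined by three parallel edges); this term contributes $\binom{n}{2}\cdot 15/\bigl((3n-1)(3n-3)(3n-5)\bigr)=O(1/n)$, so the conclusion is unaffected.
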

\begin{proof}
  Let $G_n$ be the set of simple unlabelled trivalent graphs with
  $n$~vertices.  Let $G'_n$ be the set of graphs in~$G_n$ that are
  connected.  Let $S'_n$ be the number of connected cross-metric surfaces
  with $n$~vertices; we want a lower bound on~$|S'_n|$.  Below we
  implicitly assume $n$ to be even, for otherwise these sets are empty.

  We have $|S'_n|\ge|G'_n|$, because every graph in~$G'_n$ leads to a
  connected cross-metric surface, by cellularly embedding the graph
  arbitrarily, and these cross-metric surfaces are distinct, because they
  have distinct vertex-edge graphs.

  Moreover, $|G'_n|/|G_n|$ tends to one as $n$ goes to infinity, because
  the proportion of 3-connected graphs in the set of simple unlabelled
  trivalent graphs with $n$ vertices goes to one as $n$ goes to
  infinity~\cite[p.~338]{mw-argss-84}.  (Actually, except for this
  argument, our proof is heavily inspired by Guth et al.~\cite[Lemmas 1
  and~3]{gpy-pdrs-11}.)

  The number of simple \emph{labelled} trivalent graphs with $n=2k$
  vertices is, as $n$ goes to infinity, equivalent to $\frac{(6k)!}{(3k)!}
  288^ke^2$~\cite{r-sepgt-58}.  The expected number of automorphisms of
  these graphs tends to one as $n$ goes to
  infinity~\cite[Corollary~3.8]{mw-argss-84}, which implies that $|G_n|$ is
  equivalent to $\frac{(6k)!}{(3k)!(2k)!}\* 288^ke^2$, which is at least
  $e^{Cn}n^{n/2}$ for some absolute constant $C$.  The previous paragraphs
  imply that $|S'_n|$ is asymptotically at least as large, as desired.
\end{proof}

\subsection{Proof of Theorem~\ref{T:gpy-cutgraphs-polyhedral}}

We now show that the result just proved, Theorem~\ref{T:gpy-cutgraphs},
implies the polyhedral variant, Theorem~\ref{T:gpy-cutgraphs-polyhedral}:

\begin{figure}                                                                
  \inkfrag{a}{a.}
  \inkfrag{b}{b.}
  \inkfrag{c}{c.}
  \inkfrag{d}{d.}
  \inkfrag{e}{e.}
  \inkfrag{f}{f.}
  \centerline{%
  \begin{inkfragenv}\def\svgwidth{\linewidth}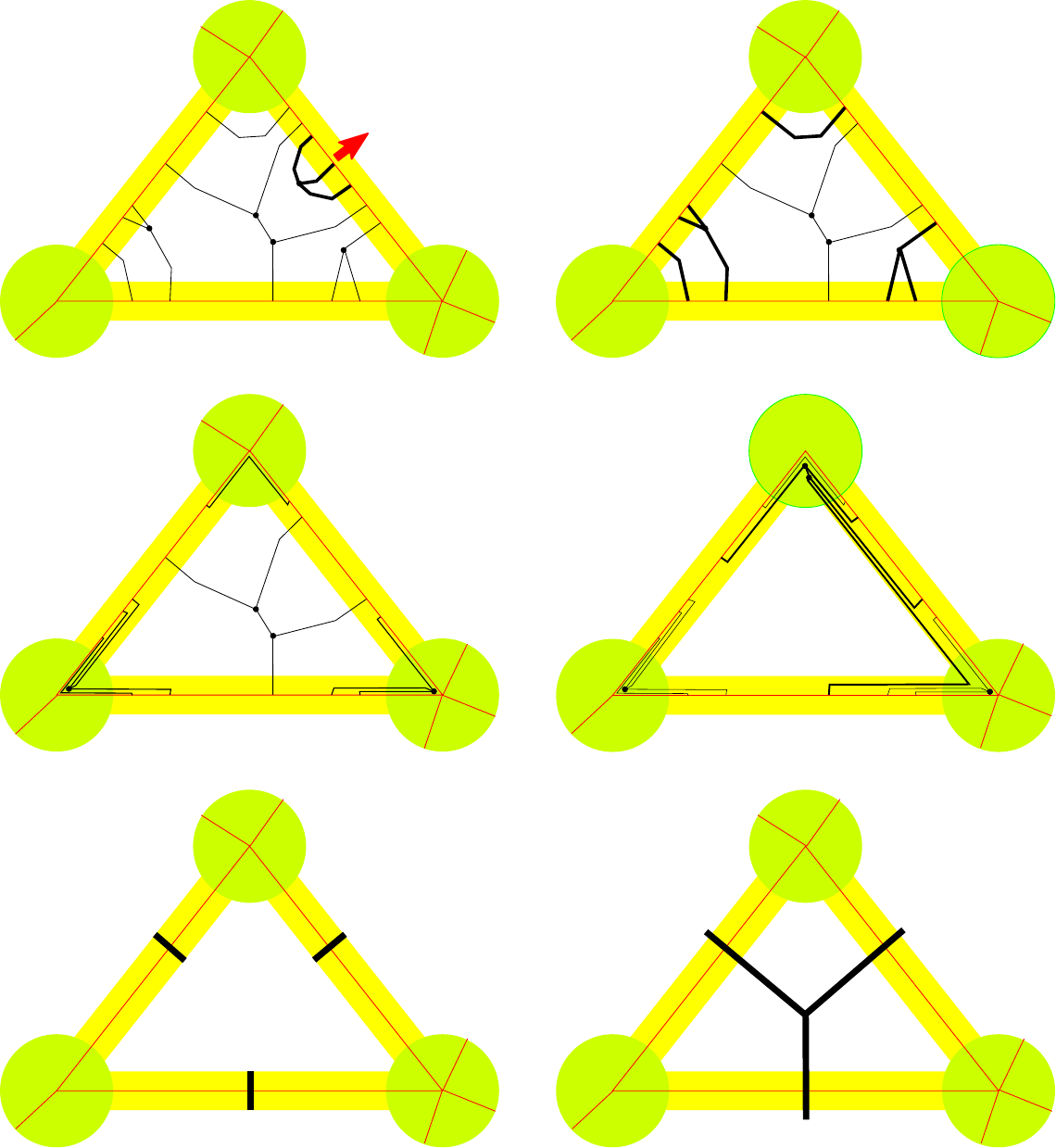\end{inkfragenv}%
}
  \caption{Illustration of the proof of
    Theorem~\ref{T:gpy-cutgraphs-polyhedral}. a.: The disks and strips
    inside one triangle of~$G$, and the part of the cut graph~$C$ inside
    the triangle.  b.: A piece with side number one is pushed across the
    side of the triangle.  c.: The pieces with side number two are pushed
    to the disks and strips.  d.: The piece with side number three is
    pushed to the disks and strips.  e.: The paths $P_s$.  f.: The
    cross-metric surface.}
  \label{F:gpy-cutgraphs-polyhedral}
\end{figure}
\begin{proof}
  As in the proof of Theorem~\ref{T:gpy-cutgraphs}, it suffices to prove
  the result for maps~$M$ that are cut graphs with minimum degree three,
  which have at most $4g-2$~vertices and $6g-3$~edges.  Let $G$ be the
  vertex-edge graph of a polyhedral triangulation on a surface~$S$ with
  genus~$g$.  Assume that $C$ is a graph with combinatorial map~$M$ and of
  length at most $n^{7/6-\varepsilon}$ on that polyhedral surface.  We
  prove that some cut graph with combinatorial map~$M$ has length
  $O(n^{7/6-\varepsilon})$ in the dual cross-metric surface~$(\surf,G^*)$.
  Since, by Theorem~\ref{T:gpy-cutgraphs}, the proportion of such surfaces
  is arbitrarily small, this implies the theorem.

  Without loss of generality, we assume that~$C$ is piecewise-linear, and
  in general position with respect to~$G$.  We consider a tubular
  neighborhood of~$G$ (Figure~\ref{F:gpy-cutgraphs-polyhedral}(a)),
  obtained by first building a small \emph{disk} around each vertex of~$G$,
  and then building a rectangular \emph{strip} containing each part of edge
  not covered by a disk.  The disks are pairwise disjoint, the strips are
  pairwise disjoint, and each strip intersects only the disks corresponding
  to the incident vertices of the corresponding edge, along paths.  We
  push~$C$ into the disks and strips as follows.  A \emph{piece} of~$C$ in
  a triangle~$T$ is a maximal connected part of~$C$ that lies in~$T$; the
  \emph{side number} of a piece is the number of sides of~$T$ it touches.

  First, consider all the pieces with side number one.  By an ambient
  isotopy, we can push these pieces across the side of the triangle they
  touch without increasing their length.  So we can assume that no piece
  has side number one in any triangle.

  Next, we deal with the pieces with side number two.  By an ambient
  isotopy of the triangle fixing its boundary, we push all such pieces into
  the strips of the two sides of the triangle, putting the vertices in the
  disk touching the two strips
  (Figure~\ref{F:gpy-cutgraphs-polyhedral}(b--c)).  Elementary geometry
  implies that this at most doubles the length of the pieces containing no
  vertex of~$C$, and it increases the length of the pieces with a vertex by
  an additional term that is linear in the number of edges incident to the
  vertices of the piece.  Since $C$ has $O(g)=O(n)$ edges, the length of
  the modified cut graph is still $O(n^{7/6-\varepsilon})$.

  Finally, there exists at most one piece with side number three lying in
  each triangle.  We can push that piece as well to the three strips of the
  sides of the triangle, pushing all vertices of that piece to one of the
  disks, chosen arbitrarily (Figure~\ref{F:gpy-cutgraphs-polyhedral}(d));
  this operation increases the length of~$C$ by an additional term that is
  at most the number of edges of the piece.  As before, this additional
  increase in length is $O(g)=O(n)$.

  So, we have obtained an isotopic cut graph~$C'$, whose length is still
  $O(n^{7/6-\varepsilon})$, with the property that the vertices of~$C'$ lie
  in the disks and the edges of~$C'$ lie in the union of the disks and the
  strips.  For each strip~$s$, draw a shortest path~$P_s$, with endpoints
  on its boundary, separating the two incident disks
  (Figure~\ref{F:gpy-cutgraphs-polyhedral}(e)).  If a portion of~$C'$
  inside~$s$ crosses~$P_s$ more than once, it forms a bigon with~$P_s$; by
  flipping innermost bigons, without increasing the length of~$C'$, we can
  assume that each portion of~$C'$ inside~$s$ crosses~$P_s$ at most once.

  Now we extend the paths~$P_s$ to form the graph~$G^*$
  (Figure~\ref{F:gpy-cutgraphs-polyhedral}(f)).  By the paragraph above,
  each crossing of a path~$P_s$ corresponds to a portion of a path of~$C'$
  that crosses the strip containing~$P_s$, and thus has length at least
  $1-\delta$, for $\delta>0$ arbitrarily close to zero (the size of the
  disks and strips are chosen according to~$\delta$).  Therefore, the
  length of~$C'$ on the cross-metric surface $(S,G^*)$ is at most
  $(1-\delta)$ times that of the length of~$C'$ on the polyhedral
  triangulated surface, and thus $O(n^{7/6-\varepsilon})$.
\end{proof}

An interesting question would be to determine whether there exists an
analog of Theorem~\ref{T:gpy-cutgraphs} when we are not given the
embedding of~$M$, but only its abstract graph.  More generally, let
$S$ and $M$ be two graphs with $n$ vertices that are cellularly
embeddable on a surface of genus~$g$; are there cellular embeddings of
$S$ and~$M$ on this surface such that the graphs cross only $O(n)$
times?


\section*{Acknowledgements}
  We would like to thank Jean-Daniel Boissonnat, Ramsay Dyer, and Arijit
  Ghosh for pointing out and discussing with us their results on Voronoi
  diagrams of Riemannian surfaces~\cite{dzm-ssivd-08} and manifolds.  We
  are grateful to the anonymous referees for their careful reading of the
  manuscript, which allowed to correct several problems and to improve the
  presentation significantly, and for pointing out Kowalick's
  thesis~\cite{k-dsi-13}.

\bibliographystyle{plain}

\appendix\normalsize

\section{Discrete Systolic Inequalities in Higher Dimensions }\label{A:hd}

In this appendix, we show that the proofs from Section~\ref{S:alfredo}
extend almost verbatim to higher dimensions. In the following discussion
$(M,T)$ will be a triangulated $d$-manifold.%
\footnote{E.g., $(M,T)$ is a simplicial complex whose underlying space is a
  $d$-manifold.  However, we can allow more general triangulations obtained
  from gluing $d$-simplices, in which, after gluing, some faces (e.g.,
  vertices) of the same $d$-simplex are identified.}
We will denote by $f_d(T)$ the number of $d$-dimensional simplices of~$T$,
and by $f_0(T)$ the number of vertices. The main difference with the
two-dimensional case is that while for surfaces, discrete systolic
inequalities in terms of $f_0$ and in terms of $f_d$ are easily seen to be
equivalent (by Euler's formula and double counting), in higher dimensions
the situation is more complicated.

We consider the supremal values of the functionals $\frac{\sys^d}{f_d}$ and
$\frac{\sys^d}{f_0}$, where $\sys$ denotes the length of a shortest closed
curve in the $1$-skeleton of~$(M,T)$ that is non-contractible on the
manifold~$M$. In particular we focus on when these quantities are bounded
from above. As we surveyed in the introduction, the two-dimensional case of
this problem has been studied by topological graph theorists and
computational topologists; however, as far as we know, it has never been
considered in dimension higher than two in the past. We report the results
and open problems that we can derive by generalizing our techniques for
surfaces.

\subsection{From Continuous to Discrete Systolic Inequalities}

To infer discrete systolic inequalities from the Riemannian ones, the
obvious approach is, as before, to start with a triangulated manifold
$(M,T)$ and to endow~$M$ with a metric $m_T$ by deciding that each simplex
of~$T$ is a regular Euclidean simplex of volume one.  (Since the simplices
are regular, we glue them by facewise isometries.)  Hence, length and
volume are naturally defined via the restriction to each Euclidean simplex.
Following Gromov~\cite{g-frm-83}, we will call such a metric a
\emph{piecewise Riemannian metric}.  Unlike the 2-dimensional case,
however, foundational work of Kervaire~\cite{k-mwdna-60} shows that in
higher dimensions such a triangulated manifold is not always smoothable.
(We will show how to circumvent this difficulty below.)

\begin{theorem}\label{T:conttodiscr-hd}
  There exists a constant $C_d$, such that for every triangulated compact
  manifold $(M,T)$ without boundary of dimension $d$, there exists a piecewise Riemannian
  metric~$m$ on~$M$ with volume~$f_d(T)$ such that for every closed curve
  $\gamma$ in $M$, there exists a homotopic closed curve $\gamma'$ on the
  $1$-skeleton~$G$ of~$T$ with
  \[|\gamma'|_{G}\leq C_d |\gamma|_m.\]
\end{theorem}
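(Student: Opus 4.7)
The plan is to follow closely the structure of the proof of Theorem~\ref{T:conttodiscr}, replacing the two-dimensional ``shorter boundary path'' argument with an inductive push through the skeleta of $T$. Crucially, the statement only demands a \emph{piecewise} Riemannian metric, so no smoothing step is needed (and indeed, as Kervaire's obstruction makes clear, smoothing is in general impossible here).

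First I would define the piecewise Riemannian metric $m_T$ on $M$ by declaring each $d$-simplex to be a regular Euclidean $d$-simplex of volume~$1$; the total volume is then $f_d(T)$. Given any closed curve $\gamma$ on $(M,m_T)$, I would perturb it at the cost of multiplying its length by a factor arbitrarily close to $1$ so that it is piecewise linear and in general position with respect to every skeleton of~$T$. If $\gamma$ does not meet the $(d-1)$-skeleton at all it is contractible and we are done, so assume otherwise.

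The core of the proof is an induction on dimension $k$ descending from $d$ to $2$, pushing $\gamma$ successively onto the $(k-1)$-skeleton. At each step the claim is: if $\gamma$ currently lies in the $k$-skeleton of $T$, then for each $k$-simplex $\tau$ and each maximal portion $p$ of $\gamma$ whose interior lies inside~$\tau$, with endpoints $x_0, x_1 \in \partial \tau$, we may replace $p$ by a shortest path on $\partial \tau$ joining $x_0$ to $x_1$. Since $\tau$ is contractible, this replacement is homotopic to $p$ relative to endpoints, and the new curve lies in the $(k-1)$-skeleton. The length control is provided by the following key lemma.

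\medskip
\emph{Key lemma.} There is a constant $\kappa_k$, depending only on $k$, such that in the regular Euclidean $k$-simplex $\tau$ of unit volume, any two boundary points satisfy $d_{\partial \tau}(x_0, x_1) \leq \kappa_k \, |x_0 - x_1|_{\mathrm{Eucl}}$.
\medskip

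Granting this, since $|p|_{m_T} \geq |x_0 - x_1|_{\mathrm{Eucl}}$, the replacement multiplies the length of $p$ by at most $\kappa_k$. After $d - 1$ rounds the curve lies on the $1$-skeleton~$G$, and I would conclude exactly as in the two-dimensional proof by simplifying backtracking to obtain a closed walk $\gamma'$ on $G$, together with the conversion $|\gamma'|_G = |\gamma'|_{m_T}/\ell_d$, where $\ell_d$ is the (fixed) edge length of a regular Euclidean $d$-simplex of unit volume. This yields the bound with $C_d = \ell_d^{-1}\prod_{k=2}^d \kappa_k$, absorbing the cumulative $(1+\delta)$ perturbation factors by taking $\delta$ small enough.

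To prove the key lemma, note that the regular Euclidean $k$-simplex of unit volume has diameter bounded by a constant $D_k$ and its dihedral angles are uniformly bounded away from $0$ and $\pi$. If $x_0, x_1$ lie on the same codimension-one face, then $d_{\partial \tau}(x_0,x_1) = |x_0-x_1|_{\mathrm{Eucl}}$. Otherwise $x_0$ lies on a face $F_0$ and $x_1$ on a distinct face $F_1$; the two faces share a codimension-two face $F_0 \cap F_1$, and an elementary planar argument in the two-dimensional section transverse to $F_0 \cap F_1$ (using the lower bound on the dihedral angle) yields a point $y \in F_0 \cap F_1$ with $|x_0-y|_{\mathrm{Eucl}} + |y-x_1|_{\mathrm{Eucl}} \leq \kappa_k |x_0-x_1|_{\mathrm{Eucl}}$. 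Concatenating straight segments in $F_0$ and $F_1$ through $y$ gives a boundary path of the desired length.

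The main obstacle, in my view, is ensuring that the inductive step preserves genericity of the curve with respect to the lower skeleta, so that ``maximal portion inside a $k$-simplex'' remains a well-defined notion at each stage; this can be handled by a further infinitesimal perturbation before each inductive step, each contributing a factor $(1+\delta)$ that can be absorbed. A secondary subtlety is that the final backtracking simplification on $G$ must be performed with care, since the $1$-skeleton of a higher-dimensional triangulation is typically much richer than that of a surface triangulation; but the standard argument of repeatedly removing a pair of consecutive edges traversed in opposite directions carries over without change and only decreases length.
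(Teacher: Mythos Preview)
Your argument is correct and follows the same overall architecture as the paper: endow each $d$-simplex with the regular Euclidean metric of unit volume, then push the curve inductively from the $k$-skeleton to the $(k-1)$-skeleton using a lemma that bounds the length increase when replacing an arc in a regular simplex by one on its boundary.

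The only real difference is in how you prove that key lemma. The paper's Lemma~\ref{L:pushing-hd} argues by radially projecting the regular $k$-simplex onto its circumscribed ball (a bi-Lipschitz map with constant depending only on~$k$) and then invoking the elementary fact that on the unit sphere a geodesic arc between two boundary points has length at most $\frac\pi2$ times the chord. Your route is more hands-on: you stay in the simplex, observe that any two facets $F_0,F_1$ meet in a ridge $F_0\cap F_1$, and route through a point $y$ on that ridge, controlling the detour by the lower bound on the dihedral angle $\arccos(1/k)$. This is a perfectly valid alternative; it avoids the radial-projection step entirely at the price of a small amount of coordinate bookkeeping (in particular, one should note that in a \emph{regular} simplex the orthogonal projection of $F_0$ onto the affine hull of $F_0\cap F_1$ lands inside $F_0\cap F_1$, so the natural candidate for~$y$ really lies on the ridge and not merely on its affine span). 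The paper's projection-to-the-ball argument is slicker and yields the constant more transparently; yours is more self-contained. Either way the remaining steps---perturbation to general position, the descent $d\to d-1\to\cdots\to 1$, and the final backtracking cleanup on~$G$---are identical to what the paper does.
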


The proof works inductively, pushing curves from the $i$-dimensional
skeleton to the ($i-1$)-dimensional one. We start with the following lemma.

\begin{lemma}\label{L:pushing-hd}
  Let $\Delta$ be an $i$-dimensional regular simplex, endowed with the
  Euclidean metric.  There exists an absolute constant $C'_i$ such that,
  for each arc~$\gamma$ properly embedded in $\Delta$ with endpoints in
  $\partial \Delta$, there exists an arc $\gamma'$ embedded on $\partial
  \Delta$, with the same endpoints as $\gamma$, such that $|\gamma'| \leq
  C'_i|\gamma|$.
\end{lemma}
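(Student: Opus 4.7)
The plan is to take $\gamma'$ to be (a cleaned-up version of) the shortest path on $\partial\Delta$ between the endpoints of $\gamma$. The argument has two ingredients: convexity of $\Delta$, which controls $\|p-q\|$ by $|\gamma|$, and a geometric comparison between intrinsic distance on $\partial\Delta$ and Euclidean distance, which controls the length on $\partial\Delta$ by $\|p-q\|$.

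Let $p,q \in \partial\Delta$ be the endpoints of $\gamma$. Since $\Delta$ is convex, the straight segment $[p,q]$ lies in $\Delta$ and realizes the shortest connection between $p$ and $q$ inside $\Delta$; in particular $\|p-q\| \leq |\gamma|$. So it suffices to produce an embedded arc on $\partial\Delta$ joining $p$ to $q$ whose length is at most a dimensional constant times $\|p-q\|$.

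For this, the key geometric claim is: there is a constant $C'_i$, depending only on $i$, such that every pair of points $x,y \in \partial\Delta$ satisfies
\[
  d_{\partial\Delta}(x,y) \leq C'_i \, \|x-y\|,
\]
where $d_{\partial\Delta}$ is the infimum of Euclidean lengths of paths contained in $\partial\Delta$. To prove this, I would exploit that the regular $i$-simplex has bounded aspect ratio: it contains an inscribed ball of radius $r_i$ and sits inside a circumscribed ball of radius $R_i$, both centered at its centroid $c$, with $R_i/r_i$ depending only on $i$. Radial scaling from $c$ then furnishes a bi-Lipschitz homeomorphism between $\partial\Delta$ and the round sphere of radius $R_i$ centered at $c$, with bi-Lipschitz constant controlled by $R_i/r_i$. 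On that sphere, intrinsic distance is at most $\pi/2$ times the Euclidean chord; pulling back through the bi-Lipschitz map yields the displayed inequality.

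Finally, I would take $\gamma'$ to be a shortest path on $\partial\Delta$ from $p$ to $q$ (which exists by compactness), and remove any backtracks so that it becomes an embedded arc; this can only shorten it. Combining the two estimates gives $|\gamma'| \leq C'_i \|p-q\| \leq C'_i |\gamma|$, which is the desired conclusion. The only mild technical point is the boundary-distance comparison, but it is a direct consequence of the bounded aspect ratio of a regular simplex; no delicate argument is needed, and the resulting constant depends only on $i$.
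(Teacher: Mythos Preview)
Your proof is correct and follows essentially the same approach as the paper: both arguments use the radial projection from the centroid to pass bi-Lipschitzly between the regular simplex and a round ball (with constant controlled by the aspect ratio $R_i/r_i$), and then invoke the elementary $\pi/2$ comparison between geodesic and chord on the sphere. The only cosmetic difference is the order of the reductions: the paper first transports the whole arc~$\gamma$ into the ball and argues there, whereas you first use convexity to bound $\|p-q\|$ by $|\gamma|$ and then apply the bi-Lipschitz map only on the boundary; both yield the same conclusion with the same ingredients.
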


\begin{proof}[Proof of Lemma~\ref{L:pushing-hd}]
  Since the statement of the lemma is invariant by scaling all the
  distances, we can assume that $\Delta$ is the regular $i$-simplex whose
  circumscribing sphere bounds the unit ball~$B$ in~$\R^i$.  Let us first
  consider the bijection~$\varphi$ that maps~$\Delta$ to~$B$ by radial
  projection (such that the restriction of~$\varphi$ to any ray from the
  origin is a linear function). It is not hard to see that there is a constant $C''_i$ such that, for any
  arc~$\gamma$ in~$\Delta$, we have
  $|\gamma|/{C''_i}\le|\varphi(\gamma)|\le C''_i|\gamma|$ (one can compute the optimal $C''_i$ by writing the map in hyperspherical coordinates and computing the differential).

 Therefore it suffices to prove the lemma for the unit ball~$B$
  instead of the regular simplex~$\Delta$.  Let $\beta$ be an arc embedded
  in~$B$.  Let $\beta'$ be a shortest geodesic arc on~$\partial B$ with the
  same endpoints as~$\beta$.  Then we have $|\beta'|\le\frac\pi2|\beta|$,
  which proves the result.
\end{proof}

\begin{proof}[Proof of Theorem~\ref{T:conttodiscr-hd}]
  As we mentioned before, we endow $M$ with the piecewise Riemannian metric
  obtained by endowing each simplex of dimension~$d$ with the geometry of
  the regular Euclidean simplex of volume~$1$.  Then, using
  Lemma~\ref{L:pushing-hd}, for every arc $A$ of $\gamma$ in every
  $d$-simplex, we push $A$ to the ($d-1$)-skeleton of~$(M,T)$, and we
  repeat this procedure inductively until $\gamma$ is embedded in the
  $1$-skeleton. In the end, the length of $\gamma'$ has increased by at
  most a multiplicative factor that depends only on~$d$.
\end{proof}

The Riemannian systolic inequality in higher dimensions is now stated in
the following theorem.

\begin{theorem}[Gromov~\cite{g-frm-83}]\label{T:gromov}
  For every $d$, there is a constant $C_d$ such that, for any Riemannian
  metric~$m$ on any essential compact $d$-manifold~$M$ without boundary, there exists a
  non-contractible closed curve of length at most
  $C_d\vol(m)^{1/d}$.
\end{theorem}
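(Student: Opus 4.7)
The plan is to prove this via Gromov's filling radius method, which is the canonical route to systolic inequalities on essential manifolds. First, I would isometrically embed $(M,m)$ into the Banach space $L^\infty(M)$ via the Kuratowski embedding $\iota\colon x \mapsto d_m(x,\cdot)$; this is indeed isometric because $|d_m(x,\cdot)-d_m(y,\cdot)|_\infty = d_m(x,y)$ by the triangle inequality. With this embedding in place, one defines the \emph{filling radius} $\mathrm{FillRad}(M)$ as the smallest $\varepsilon>0$ such that the fundamental class $[M]$ (with $\mathbb{Z}$ or $\mathbb{F}_2$ coefficients, depending on orientability) becomes null-homologous in the open $\varepsilon$-neighborhood $U_\varepsilon(\iota(M)) \subset L^\infty(M)$. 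The proof then reduces to two steps.

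The first step is the inequality $\mathrm{sys}(M,m) \le 6\,\mathrm{FillRad}(M)$, valid whenever $M$ is essential. The strategy is by contradiction: assume $\mathrm{sys}(M,m) > 6R$ where $R = \mathrm{FillRad}(M)$. Take a singular chain $P$ in $U_R(\iota(M))$ with $\partial P = \iota(M)$. The key observation is that any metric ball in $(M,m)$ of radius less than $\mathrm{sys}(M,m)/2$ is geodesically convex and, in particular, its lift into $L^\infty(M)$ can be coherently contracted. Using a sufficiently fine simplicial approximation of $P$ together with a partition-of-unity / nerve construction, one retracts $P$ onto a simplicial complex of dimension at most $d$ with vertices lying on $\iota(M)$. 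One then factors $\iota|_M$ through the classifying map to $B\pi_1(M)$ and observes that the extension of this map over $P$ would kill the image of $[M]$ in $H_d(B\pi_1(M))$, contradicting essentiality.

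The second and main step is the purely geometric inequality $\mathrm{FillRad}(M) \le C_d\,\mathrm{vol}(m)^{1/d}$; this is where nearly all the work lies and is the main obstacle. The approach is to build an explicit filling of $\iota(M)$ in $L^\infty(M)$ whose support lies in a small neighborhood. One chooses a maximal $\rho$-separated net $\{x_i\}$, considers the corresponding cover by balls $B(x_i,2\rho)$, and builds a chain by coning in $L^\infty(M)$: to each simplex of the nerve one assigns a geodesic-style simplex formed from the $x_i$'s using the convex structure of $L^\infty$. The crux is to choose $\rho \asymp \mathrm{vol}(m)^{1/d}$ and then, via an inductive argument on dimension, to bound the size and thickness of the constructed chain. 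The induction is run through a slicing / isoperimetric argument: one shows that if no such filling of appropriate thickness existed, one could produce a slice of $M$ with unusually small $(d{-}1)$-volume and iterate, contradicting a volume count on the net.

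Combining the two steps yields $\mathrm{sys}(M,m) \le 6 C_d\,\mathrm{vol}(m)^{1/d}$, with the advertised constant absorbed into a new $C_d$. I expect the second step to be the bottleneck, both because the coning construction requires care to ensure the chain represents $[M]$ and because the isoperimetric control on $L^\infty$ slices is the delicate technical heart of Gromov's argument; the first step, by contrast, is essentially a soft topological argument once the notion of filling radius is in hand.
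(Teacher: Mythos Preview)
Your proposal is correct and follows precisely the approach the paper itself attributes to Gromov: the paper does not supply an independent proof of this theorem but cites~\cite{g-frm-83} and explicitly notes that ``its proof revolves around two key inequalities: the filling radius-volume inequality and a systole-filling radius inequality,'' which are exactly your two steps. Your outline of the Kuratowski embedding, the bound $\mathrm{sys}\le 6\,\mathrm{FillRad}$ via essentiality, and the isoperimetric/coarea argument for $\mathrm{FillRad}\le C_d\vol^{1/d}$ is the standard route and matches the paper's references to~\cite[Lemma~4.2b and pp.~9--10]{g-frm-83}.
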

For a definition of essential manifold, see~\cite{g-frm-83}. The prime
examples of essential manifolds are the so-called aspherical manifolds,
which are the manifolds whose universal cover is contractible. These
include for example the $d$-dimensional torus for every~$d$, or manifolds
that accept a hyperbolic metric and, more generally, manifolds that
are locally CAT(0). In particular, all
surfaces except the $2$-sphere and the projective plane are aspherical. On the other hand,
real projective spaces and lens spaces are examples of essential manifolds that are non aspherical.

Theorem~\ref{T:gromov} also holds for
piecewise Riemannian metrics. Indeed, its proof revolves around two key
inequalities: the filling radius-volume inequality and a systole-filling
radius inequality. The former relies on a coarea formula which holds for
piecewise Riemannian metrics (see~\cite[Lemma~4.2b]{g-frm-83}), and the
proof of the latter uses no smoothness property either, see~\cite[p.~9
and 10]{g-frm-83}.  As a corollary of this refinement to piecewise
Riemannian metrics and of our Theorem~\ref{T:conttodiscr-hd}, we obtain the
following result relating the length of systoles and the number of facets.

\begin{corollary}
  Let $(M,T)$ be a triangulated essential compact $d$-manifold without boundary. Then, for
  some constant $c_d$ depending only on $d$, some non-contractible closed
  curve in the $1$-skeleton of $T$ has length at most
  $c_df_d(T)^{1/d}$.
\end{corollary}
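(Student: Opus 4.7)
The plan is to combine the two results just stated, Theorem~\ref{T:conttodiscr-hd} and (the piecewise Riemannian version of) Theorem~\ref{T:gromov}, in a completely direct chain. First, I would apply Theorem~\ref{T:conttodiscr-hd} to the triangulated essential manifold $(M,T)$, producing a piecewise Riemannian metric $m$ on $M$ of total volume $\vol(m)=f_d(T)$, together with the comparison property that every closed curve $\gamma$ in $(M,m)$ is homotopic to a walk $\gamma'$ in the $1$-skeleton $G$ of $T$ satisfying $|\gamma'|_G\le C_d |\gamma|_m$.

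Next I would invoke Theorem~\ref{T:gromov} on $(M,m)$. The metric $m$ is only piecewise Riemannian and not smooth (which, as noted via Kervaire, is in general unavoidable in dimension $d\ge 5$), so the key observation is the one recorded by the authors just above the corollary: the two ingredients of Gromov's proof---the coarea inequality bounding volume in terms of filling radius, and the bound on the systole by the filling radius---are valid for piecewise Riemannian metrics. Since $M$ is essential and compact without boundary, this produces a non-contractible closed curve $\gamma$ in $(M,m)$ of length at most $C'_d\vol(m)^{1/d}=C'_d f_d(T)^{1/d}$.

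Finally, I would pull $\gamma$ back to the $1$-skeleton via Theorem~\ref{T:conttodiscr-hd}, obtaining a homotopic walk $\gamma'$ in $G$ with $|\gamma'|_G\le C_d|\gamma|_m\le C_d C'_d\, f_d(T)^{1/d}$. Homotopic curves are simultaneously contractible or non-contractible, so $\gamma'$ is non-contractible as required. Setting $c_d:=C_d C'_d$ yields the claimed bound.

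The only genuine subtlety---hardly an obstacle, since it is resolved by a reference---is the passage to piecewise Riemannian metrics in Gromov's theorem; everything else is a mechanical composition of the two theorems already in hand. If one wanted to avoid even that, a workaround would be to smooth $m$ (away from the codimension-two skeleton, using a partition-of-unity construction analogous to Lemma~\ref{L:smooth}) at the cost of an arbitrarily small factor $1+\delta$ in volumes and lengths, and then apply the usual smooth Riemannian version of Gromov's inequality.
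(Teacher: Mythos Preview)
Your proposal is correct and follows exactly the route the paper takes: apply Theorem~\ref{T:conttodiscr-hd} to obtain the piecewise Riemannian metric of volume~$f_d(T)$, invoke the piecewise Riemannian version of Theorem~\ref{T:gromov}, and push the resulting short non-contractible curve into the $1$-skeleton. One minor caveat on your closing remark: the smoothing workaround you suggest is \emph{not} generally available, precisely because of the Kervaire obstruction you already cited---some triangulated manifolds admit no smooth structure whatsoever, so there is no smooth metric on~$M$ to fall back on; the piecewise Riemannian extension of Gromov's theorem is genuinely required here, not merely a convenience.
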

  
\subsection{From Discrete to Continuous Systolic Inequalities}

We now turn our attention to the other direction, namely, transforming a
discrete systolic inequality into a continuous one.

\begin{theorem}\label{T:discrtocont-hd}
  Let $M$ be a compact Riemannian manifold of dimension $d$ and volume $V$ without boundary,
  and let $\delta>0$.  For infinitely many values of~$f_0$, there exists a
  triangulation $(M,T)$ of~$M$ with $f_0$~vertices, such that every closed
  curve~$\gamma$ in the $1$-skeleton $G$ of~$M$ satisfies
  \[|\gamma|_m\leq (1+\delta)\frac{10}{\sqrt{\pi}}\Gamma(d/2+1)^{1/d}
  \left(\frac{V}{f_0} \right)^{1/d} |\gamma|_G.\]
\end{theorem}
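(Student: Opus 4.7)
The plan is to follow the proof of Theorem~\ref{T:discrtocont} essentially verbatim, replacing each two-dimensional ingredient by its $d$-dimensional analog. Fix $\eta\in(0,1/2)$, to be chosen later in terms of $\delta$, and a parameter $\varepsilon>0$, to be chosen last in terms of $\eta$. First, build an $\varepsilon$-separated net $P=\{p_1,\ldots,p_{f_0}\}$ on $(M,m)$ — for example, take $P$ to be the centers of an inclusionwise maximal family of pairwise disjoint open balls of radius $\varepsilon/2$ — and then perturb the points of $P$ by at most $\eta\varepsilon$ so that they are in general position (no $d+2$ of them lie on a common geodesic sphere).

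Next, realize the intrinsic Delaunay complex of $P$ as a genuine triangulation of $M$ whose edges are shortest geodesics. This is the higher-dimensional analog of Lemma~\ref{L:Delaunay}, and it is provided by the work of Boissonnat, Dyer, and Ghosh~\cite{bdg-dtm-13} on Delaunay triangulations of Riemannian manifolds: provided $\varepsilon$ is small enough with respect to the injectivity radius and curvature bounds of $(M,m)$, and $\eta$ is small enough to guarantee sufficient thickness of the simplices, the Delaunay complex of $P$ is indeed a triangulation $T$ of $M$ whose edges are geodesic arcs of length at most $2(1+\eta)\varepsilon$. It then follows immediately that every closed curve $\gamma$ in the $1$-skeleton $G$ of $T$ satisfies
\[
|\gamma|_m \leq 2(1+\eta)\varepsilon\,|\gamma|_G.
\]

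It remains to relate $\varepsilon$ to $V/f_0$. By compactness the sectional curvatures of $(M,m)$ are uniformly bounded, and the $d$-dimensional Bertrand--Diquet--Puiseux expansion gives, uniformly over the center,
\[
\mathrm{vol}(B(p,r)) \;=\; \frac{\pi^{d/2}}{\Gamma(d/2+1)}\,r^d \;+\; O(r^{d+2}).
\]
Hence, for $\varepsilon$ small enough, each ball of radius $(1-2\eta)\varepsilon/2$ centered at a point of $P$ has volume at least $(1-2\eta)\,\frac{\pi^{d/2}}{\Gamma(d/2+1)}(\varepsilon/2)^d$. These balls are pairwise disjoint, so summing their volumes and comparing with $V$ yields an upper bound of the form $\varepsilon\le c_d\,(V/f_0)^{1/d}$ where $c_d\to \frac{2}{\sqrt{\pi}}\Gamma(d/2+1)^{1/d}$ as $\eta\to0$. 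Substituting this into the displayed inequality and choosing $\eta$ small enough in terms of $\delta$ produces the claimed multiplicative constant $(1+\delta)\frac{10}{\sqrt{\pi}}\Gamma(d/2+1)^{1/d}$ (in fact any constant strictly larger than $\frac{4}{\sqrt{\pi}}\Gamma(d/2+1)^{1/d}$). Since $\varepsilon$ may be chosen arbitrarily small, $f_0$ can be made arbitrarily large, so the construction works for infinitely many values of $f_0$.

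The main obstacle is the Delaunay step: in dimension~$2$, Leibon's theorem (our Lemma~\ref{L:Delaunay}) turns an $\varepsilon$-separated net directly into a geodesic Delaunay triangulation, but in dimension $\ge 3$ one has to rule out degenerate ``sliver'' simplices, which requires a quantitative thickness hypothesis on the sample. This is precisely what is supplied by the Boissonnat--Dyer--Ghosh theorem; once that technical input is accepted, the rest of the argument is a straightforward dimension-indexed rewriting of the volume-counting step in the proof of Theorem~\ref{T:discrtocont}, with the volume of a Euclidean $d$-ball playing the role previously played by the area of a Euclidean disk.
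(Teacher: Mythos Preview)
Your overall strategy is the right one and matches the paper's, but there is a genuine gap in how you invoke Boissonnat--Dyer--Ghosh, and it is precisely this gap that explains the constant~$10$ in the theorem (which you parenthetically claim can be improved to~$4$).

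You treat the perturbation parameter~$\eta$ as if it can be taken arbitrarily small: you perturb the $\varepsilon$-net by at most~$\eta\varepsilon$ ``so that [the points] are in general position'', and then let $\eta\to 0$ at the end to squeeze out the constant $4/\sqrt\pi$. In dimension~$2$ this works, because general position alone (no four cocircular points) already forces the Delaunay complex to be a triangulation of the surface. In dimension $d\ge3$ it does not: a generic sample can still produce \emph{sliver} simplices, and the Delaunay complex of a dense, separated, general-position sample need not be homeomorphic to~$M$. The Boissonnat--Dyer--Ghosh scheme does not merely put points in general position; it actively removes slivers, and to do so it must displace points by a \emph{fixed} fraction of~$\varepsilon$, not an arbitrarily small one. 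Concretely (this is the paper's Theorem~\ref{T:BDG}), starting from a $(1,\varepsilon')$-net, their algorithm outputs only a $(\mu_0,\varepsilon)$-net with $\varepsilon\le 5\varepsilon'/4$ and $\mu_0\ge 2/5$: the separation drops to $2\varepsilon/5$, so the disjoint balls used in the packing count have radius only~$\varepsilon/5$, not $(1-2\eta)\varepsilon/2$. Carrying this through the volume estimate gives $|\gamma|_m\le 2\varepsilon\,|\gamma|_G$ with $\varepsilon\le \frac{5}{\sqrt\pi}\Gamma(d/2+1)^{1/d}(V/f_0)^{1/d}(1+o(1))$, whence the factor~$10/\sqrt\pi$ in the statement. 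Your claimed improvement to $4/\sqrt\pi$ is thus unjustified; once you replace the ``arbitrarily small $\eta$'' step by the actual output guarantees of~\cite{bdg-dtm-13}, your argument becomes exactly the paper's proof.
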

\noindent (Here, $\Gamma$ is the usual Gamma function.)  The proof follows
the same idea as the proof of Theorem~\ref{T:discrtocont}. We start with
the centers of a maximal set of disjoint balls of radius $\varepsilon/2$ in
$M$ and want to compute the Delaunay triangulation associated to it, with
the hope that if $\epsilon$ is small enough, we will obtain a triangulation
of~$M$. However, Delaunay complexes behave differently in higher
dimensions, and this hope turns out to be false in many cases. We rely
instead on a recent work reported by Boissonnat, Dyer and
Ghosh~\cite{bdg-dtm-13} who devised the correct perturbation scheme to
triangulate a manifold using a Delaunay complex. We will use the following
theorem.

\begin{theorem}\label{T:BDG}
Let $M$ be a compact Riemannian manifold. For a small enough $\varepsilon$, there
exists a point set $P \subseteq M$ such that 
\begin{enumerate}
\item[(i)] For every $x \in M$, there exists $p \in P$ such that $|x-p|_m
  \leq \varepsilon$.
\item[(ii)] For every pair $p \neq p' \in P$, $|p-p'|_m \geq
  2\varepsilon/5$.
\item[(iii)] The Delaunay complex of $P$ is a triangulation of $M$.
\end{enumerate}
\end{theorem}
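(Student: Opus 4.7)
The plan is to begin with a maximal $\varepsilon$-separated net $P_0\subset M$, obtained by greedy selection of points that are pairwise at geodesic distance at least~$\varepsilon$. Such a net automatically satisfies (i) (by maximality, no point of $M$ is farther than $\varepsilon$ from $P_0$) and a stronger version of (ii) (distances at least $\varepsilon$, which is more than the claimed $2\varepsilon/5$). The real work is (iii): in general, the intrinsic Delaunay complex of a dense net on a Riemannian manifold need not be a triangulation, because of ``slivers'', that is, $d$-simplices whose vertices nearly co-sphere on a lower-dimensional subsphere, or because of curvature effects that make the empty-ball predicate sensitive to infinitesimal moves.

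Next I would apply a perturbation step. For each $p_i\in P_0$, pass to a geodesic normal chart $\exp_{p_i}\colon B(0,\rho)\subset T_{p_i}M\to M$, with $\rho$ comparable to the injectivity radius of $M$. Taking $\varepsilon$ much smaller than $\rho$ and than the reciprocal of the maximal sectional curvature, the pulled-back metric differs from the Euclidean metric on $T_{p_i}M$ by a multiplicative factor $1+O(\varepsilon^2)$, so at scales of order $\varepsilon$ every local intrinsic Voronoi/Delaunay structure is a small perturbation of a Euclidean one. Choose a perturbation $p_i\mapsto\tilde p_i$ with $d_m(p_i,\tilde p_i)\le\varepsilon/10$, made in such a way that at every $\tilde p_i$, the Euclidean Delaunay complex of the finite point set $\{\exp_{p_i}^{-1}(\tilde p_j)\}$ (restricted to vertices within $O(\varepsilon)$ of $\tilde p_i$) is $\mu$-\emph{thick}, meaning that the circumradius of each $d$-simplex is separated by a positive relative margin~$\mu$ from the radius of any other ball through its vertices. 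Thickness is a full-measure open condition on the perturbation, so such a $\tilde P$ exists.

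The purpose of thickness is stability. Because the intrinsic metric and the chart metric differ by a factor $1+O(\varepsilon^2)$ and because $\mu$-thickness is preserved under $(1+o(1))$-bi-Lipschitz distortions provided $\varepsilon$ is small compared with $\mu$, the combinatorial Delaunay structure is the same in the chart and intrinsically on~$M$. One then checks a local-to-global gluing property: for any candidate $d$-simplex $\sigma$ on the vertex set $\tilde P$, the existence of an empty geodesic circumscribed ball is witnessed consistently in every chart centred at a vertex of $\sigma$, so the intrinsic Delaunay complex is a well-defined simplicial complex. A volume count plus a local homeomorphism argument (each small geodesic ball of $M$ is covered by the union of the top-dimensional simplices containing its unique nearest vertex) shows that the underlying space of the complex is $M$, hence (iii). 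Since $d_m(p_i,\tilde p_i)\le\varepsilon/10$, the perturbed set $\tilde P$ still satisfies (i) (absorb the $10\%$ loss by choosing $\varepsilon$ slightly smaller from the start) and (ii) holds with margin at least $\varepsilon-2\varepsilon/10=4\varepsilon/5\ge 2\varepsilon/5$.

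The main obstacle is quantitative: one must exhibit a perturbation of $P_0$ for which the thickness estimate $\mu$ holds \emph{simultaneously} in every local chart, with a lower bound $\mu>0$ uniform over $M$. This is a nontrivial combinatorial-packing argument (essentially a union bound over all possible ``bad'' sliver configurations in overlapping charts), and it is here that I would invoke the detailed perturbation scheme of Boissonnat, Dyer and Ghosh~\cite{bdg-dtm-13}, either in its randomised form (shake each vertex independently within a ball of radius $\varepsilon/10$ and show bad configurations have small probability) or in its deterministic greedy form (adjust the vertices one by one so that each new placement preserves every previously established thickness inequality).
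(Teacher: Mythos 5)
Your approach is essentially the same as the paper's: both start from a maximal $\varepsilon$-separated net (so density and separation are automatic) and then invoke the perturbation scheme of Boissonnat, Dyer and Ghosh \cite{bdg-dtm-13} to obtain a nearby net whose intrinsic Delaunay complex triangulates $M$. The paper's proof simply cites the quantitative input/output bounds of the BDG extended algorithm ($\varepsilon\le 5\varepsilon'/4$, $\mu_0\ge 2/5$) and translates them directly into items (i)--(iii), whereas you unfold some of the internal mechanism (normal charts, thickness, bi-Lipschitz stability) before delegating the hard uniform-thickness estimate to the same reference; both are correct.
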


For completeness, we sketch how to infer this theorem from the
paper~\cite{bdg-dtm-13}.

\begin{proof}[Proof of Theorem~\ref{T:BDG}]
  We say that a set of points $P \subseteq M$ is $\varepsilon$-dense if
  $d(x,P) < \varepsilon$ for $x \in M$, $\mu_0 \varepsilon$-separated if
  $d(p,q)\geq \mu_0 \varepsilon$ for all distinct $p,q \in P$, and is a
  $(\mu_0,\varepsilon)$-net if it is $\varepsilon$-dense and $\mu_0
  \varepsilon$-separated.

  Taking $\mu'_0=1$ and $\varepsilon'$ small enough, we start with a
  $(\mu_0', \varepsilon')$-net in $M$, which can be obtained for example by
  taking the centers of a maximal set of disjoint balls of radius
  $\varepsilon'/2$. Now, the extended algorithm of Boissonnat et
  al.~\cite{bdg-dtm-13} outputs a $(\mu_0, \varepsilon)$-net with
  $\varepsilon \leq 5\varepsilon'/4$ and $\mu_0 \geq 2\mu_0'/5$, which will
  be our point set $P$.  These conditions correspond to items (i) and~(ii)
  in our theorem. For $\varepsilon$ small enough, all the hypotheses of
  their main theorem are fulfilled, and therefore we obtain that the
  Delaunay complex of the $(\mu_0, \varepsilon)$-net is a triangulation
  of~$M$, which is our item (iii).
\end{proof}

The proof of Theorem~\ref{T:discrtocont-hd} now follows the same lines as
in the $2$-dimensional case.

\begin{proof}[Proof of Theorem~\ref{T:discrtocont-hd}]
  Let $\varepsilon>0$ be a constant. Following Theorem~\ref{T:BDG}, if
  $\varepsilon$ is small enough, there exists a point set $P$ whose
  Delaunay complex triangulates $M$. Let $G$ be the $1$-skeleton of this
  complex, and $\gamma$ be a closed curve embedded in $G$.

  By property (i), neighboring points in $G$ are at distance no more
  than $ 2\varepsilon$, therefore we have $|\gamma|_m \leq
  2\varepsilon |\gamma|_G$. There just remains to estimate the value
  of $\varepsilon$, which we do by estimating the number of balls. By
  compactness, the scalar curvature of $M$ is bounded from above by
  some constant $K$. Now, if $\varepsilon$ is small enough, for any $p
  \in P$ we have:

  \[\vol(B(p,\varepsilon/5))\geq
  \frac{\varepsilon^d}{5^d}\left(1-\frac{\varepsilon^2}{6d}K+o(\varepsilon^4)\right)\vol(B^d),\]
  where $B^d$ is the unit Euclidean ball of dimension $d$. This follows
  from standard estimates on the volume of a ball in a Riemannian manifold,
  see for example Gromov~\cite[p. 89]{g-scmc-91}. We recall that
  $\vol(B^d)=\frac{\pi^{d/2}}{\Gamma(d/2+1)}$.

  By property (ii), the balls $B(p,\varepsilon/5)$ are disjoint,
  therefore their number $f_0$ is at most $\frac{ \Gamma(d/2+1)5^d
    V}{\pi^{d/2}\varepsilon^d(1-\varepsilon)}$ if $\varepsilon$ is
  small enough. Finally, putting together our estimates, we obtain
  that
  \[|\gamma|_m\leq (1+\delta)\frac{10}{\sqrt{\pi}} \left(\frac{
      \Gamma(d/2+1)}{f_0} V \right )^{1/d} |\gamma|_G.\qedhere\]
\end{proof}
However, this theorem leads to no immediate corollaries, since unlike the
two-dimensional case, we do not know of any discrete systolic inequalities
involving $f_0$ in dimensions larger than two. This leads to the following
question.

\begin{question}
  Are there manifolds $M$ of dimension $d\geq 3$ for which there exists a
  constant $c_M$ such that, for every triangulation $(M,T)$, there is a
  non-contractible closed curve in the $1$-skeleton of $T$ of length at
  most $c_M f_0(T)^{1/d}$?
\end{question}
  
Notice that a positive answer to this question for essential compact
manifolds without boundary would yield a new proof of Gromov's systolic
inequality.

\medskip

\textit{Remark:} In his thesis~\cite{k-dsi-13}, Kowalick states a theorem
that is closely related to our Theorem~\ref{T:discrtocont-hd}, and thus to
this question. Essentially, his result is ours substituting $f_0$
with $f_d$. Precisely, he shows that if for a manifold $M$, there exists
$c_M'>0$ such that, for every triangulation $(M,T)$, there is a
non-contractible closed curve in the $1$-skeleton of $T$ of length at most
$c_M' f_d(T)^{1/d}$, then there exists a constant $s_M$ such that the
systole of every Riemannian metric on $M$ is bounded above by $s_M
\vol(M)^{1/d}$. This statement can be derived from our
proof without much extra difficulty.  It is enough to show that in the
triangulations constructed in the proof of Theorem~\ref{T:discrtocont-hd},
for large enough $f_0(T)$, the number $f_d(T)$ is bounded above by
$f_0(T)$ up to a multiplicative constant that depends only on the
dimension. This follows again from a packing argument and from the bounds
on volume growth of small balls. For $\epsilon$ small enough with respect
to the strong convexity radius and the minimal sectional curvature of the
manifold, the quotient $\vol(B(x,2\epsilon))/\vol(B(x,\epsilon)$ is bounded
from above by an absolute constant $k_d$ depending only on the
dimension. Since two points in the same facet of our Delaunay complex are
at distance at most $2\epsilon$, we have $f_d(T)\leq \frac{{k_d}^d}{d+1}
f_0(T)$.

\section{Lengths of Genus Zero Decompositions}\label{A:genuszero}

A genus zero decomposition of a surface is a family of disjoint simple
closed curves that cut the surface into a (connected) genus zero surface
with boundary.  Every genus zero decomposition (of a surface with genus at
least two) can be extended to a pants decomposition.  In this section, we
prove the following strengthening of Theorem~\ref{T:gpy}:
\begin{theorem}\label{T:genuszero}
  For any $\varepsilon>0$, the following holds with probability tending to
  one as $n$ tends to~$\infty$: A random (trivalent, unweighted)
  cross-metric surface with $n$~vertices has no genus zero decomposition of
  length at most $n^{7/6-\varepsilon}$.
\end{theorem}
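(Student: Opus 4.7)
The plan is to adapt the counting argument underlying Theorem~\ref{T:gpy} of Guth, Parlier, and Young directly to genus zero decompositions, using a strategy closely parallel to the proof of Proposition~\ref{P:gpy-cutgraphs}. An Euler-characteristic computation first shows that any genus zero decomposition of a closed orientable surface of genus~$g$ consists of exactly $g$ disjoint simple closed curves, cutting the surface into a connected sphere $\Sigma$ with $2g$ boundary circles (since cutting preserves Euler characteristic, $2-2g = 2-2k$ forces $k=g$). Given a cross-metric surface $(S,G^*)\in\mathcal{S}(g,n)$ together with a genus zero decomposition $\Gamma$ of length at most~$L$, I would overlay $\Gamma$ with $G^*$ and cut along $\Gamma$ to obtain a connected planar graph $H$ with at most $n$ interior vertices, at most $2L$ boundary vertices (the crossings of $\Gamma$ with $G^*$, each cut into two), and exactly $2g$ boundary circles. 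The surface~$S$ is reconstructed from $H$ by prescribing a perfect matching of the $2g$ boundary circles into $g$ pairs (specifying which pair comes from each curve of $\Gamma$), together with the orientation-reversing identification along each pair.

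Following the decomposition used in the proof of Proposition~\ref{P:gpy-cutgraphs}, I would split $H$ into an interior trivalent planar subgraph $H_1$ (contributing $2^{O(n)}$ combinatorial choices) and a boundary structure $H_2$ describing how $H_1$ attaches to the boundary circles via \emph{chords}. A careful enumeration of $H_2$ together with the matching and gluing data should give a count of the form $2^{O(n)} \cdot (L/g+1)^{O(g)}$, in close analogy with the bound $f(g,n,L)$ of Proposition~\ref{P:gpy-cutgraphs}. Taking $L=n^{7/6-\varepsilon}$ and restricting attention to values of~$g$ around $\Theta(n)$ (the typical genus of a uniformly random trivalent cross-metric surface), this count is $e^{O(n)} n^{(1/6-\varepsilon)n}$, whereas $|\mathcal{S}(g,n)|\geq e^{Cn} n^{n/2}$ by Lemma~\ref{L:count-connected}. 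The ratio of these bounds tends to zero as $n\to\infty$, so the fraction of surfaces in $\mathcal{S}(g,n)$ admitting a genus zero decomposition of length at most $n^{7/6-\varepsilon}$ is asymptotically negligible, yielding the conclusion.

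The main obstacle will be the careful enumeration of the boundary combinatorics. Unlike Proposition~\ref{P:gpy-cutgraphs}, where the outer boundary of $H$ is a single polygon whose structure is controlled by a triangulation of a $(12g-6)$-gon, the cut surface here has $2g$ disjoint boundary circles and one must enumerate both how the $O(L)$ chord endpoints are distributed among them (yielding the key factor $(L/g+1)^{O(g)}$ via the number of weak compositions of $L$ into $O(g)$ parts) and how the circles are paired and glued to reconstruct $\Gamma$. A secondary point is that a genus zero decomposition has only $g$ curves rather than the $3g-3$ of a pants decomposition, so the combinatorial structure to encode is actually simpler than in the pants case, which is precisely what makes the stronger ``genus zero'' statement go through with the same exponent~$7/6$.
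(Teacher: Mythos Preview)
Your approach is genuinely different from the paper's, and considerably more laborious. The paper does \emph{no} new counting whatsoever: it proves a short structural lemma (Proposition~\ref{P:genuszero}) showing that any genus zero cross-metric surface with $b$ boundary components admits a pants decomposition in which every edge of~$G^*$ is crossed $O(\log b)$ times. Given a genus zero decomposition~$\Gamma'$ of $(S,G^*)$ of length $\le n^{7/6-\varepsilon}$, one cuts along~$\Gamma'$, applies the lemma to the resulting sphere with $2g$ holes to get a pants decomposition~$\Gamma$ of length $O(n^{7/6-\varepsilon}\log n)$ there, and observes that $\Gamma\cup\Gamma'$ is a pants decomposition of~$S$ of length $O(n^{7/6-\varepsilon'})$. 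Theorem~\ref{T:gpy} then finishes the job. The whole argument is a one-paragraph reduction.

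Your direct counting strategy is in the spirit of Guth--Parlier--Young, and it is plausible that it can be pushed through, but as written it has real gaps. First, the analogy with Proposition~\ref{P:gpy-cutgraphs} is imperfect: there the combinatorial map~$M$ is \emph{fixed}, so reconstructing the gluing from~$H$ costs only a factor~$O(L)$ (one distinguished half-edge). In your setting there is no fixed~$M$; you must pay for the perfect matching of the $2g$ boundary circles, which is $(2g-1)!! \approx (2g/e)^g = e^{\Theta(g\log g)}$, together with the rotational offsets $\prod_i k_i \le (L/g)^g$. Neither of these is $2^{O(n)}$ when $g=\Theta(n)$; they are $n^{\Theta(n)}$ factors that must be tracked against the $n^{n/2}$ denominator, and whether the exponent comes out below $1/2$ depends on constants you have not computed. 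Second, the chord enumeration is harder than you suggest: on a sphere with $2g$ holes the chords do not sit inside a single polygon whose non-crossing arcs are governed by a triangulation of a $(12g-6)$-gon; you need a different planar bound on the number of chord ``types'', and the minimality/exchange argument that yields conditions~(i), (ii), (ii$'$) in Proposition~\ref{P:gpy-cutgraphs} does not transfer verbatim to a family of disjoint curves. None of this is obviously fatal, but your claimed bound $e^{O(n)} n^{(1/6-\varepsilon)n}$ is not justified by what you wrote, and the paper's reduction sidesteps all of it.
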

The argument is very similar to the one by Poon and
Thite~\cite[Sect.~2]{pt-pdpp-06}.  As we shall see, this theorem is an
immediate consequence of the following proposition:
\begin{proposition}\label{P:genuszero}
  Let $(S,G^*)$ be a (trivalent, unweighted) cross-metric surface with
  genus zero and $b\ge3$ boundary components.  Then there exists some pants
  decomposition~$\Gamma$ of~$S$ such that each edge of~$G^*$ has $O(\log
  b)$ crossings with each edge of~$\Gamma$.
\end{proposition}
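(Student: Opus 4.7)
The plan is to build $\Gamma$ recursively by a balanced divide-and-conquer procedure, very similar to the argument of Poon and Thite~\cite{pt-pdpp-06}. The base case $b=3$ is trivial: $S$ is itself a pair of pants, so $\Gamma=\emptyset$ works. For $b\ge 4$, the plan is to find a simple closed curve $\gamma_0\subset S$, transverse to $G^*$, that separates $S$ into two genus-zero sub-surfaces $S_1,S_2$ with $b_1,b_2\le\lceil 2b/3\rceil+1$ boundary components each, and that crosses each edge of $G^*$ only a constant number of times. Then recurse on $(S_1,G^*_{|S_1})$ and $(S_2,G^*_{|S_2})$ to obtain $\Gamma_1,\Gamma_2$, and set $\Gamma=\{\gamma_0\}\cup\Gamma_1\cup\Gamma_2$.

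The key step is constructing the balanced separator~$\gamma_0$. I would work with the planar dual~$D$ of~$G^*$ on~$S$: vertices of~$D$ are faces of~$G^*$, edges of~$D$ are in bijection with the edges of~$G^*$, and the faces of~$G^*$ incident to the $b$~boundary components of~$S$ determine $b$~distinguished vertices in~$D$ (after possibly subdividing faces incident to several boundaries). Since~$G^*$ is trivalent, the faces of~$D$ in the interior of~$S$ are all triangles. A weighted variant of Miller's simple cycle separator theorem applied to~$D$, with weight~$1$ placed on each distinguished vertex, yields a simple cycle~$C$ in~$D$ separating the distinguished vertices into two groups of total weight in $[b/3,2b/3]$. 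This cycle~$C$ corresponds to the desired simple closed curve~$\gamma_0$ in~$S$, and since a simple cycle in~$D$ uses each edge of~$D$ at most once, $\gamma_0$ crosses each edge of~$G^*$ at most once.

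For the analysis, the recursion has depth $O(\log b)$ since every split drops the number of boundaries by a factor of at most $2/3$. At any recursion level, the current sub-surface~$S'$ contains at most one piece of each original edge of~$G^*$, because all ancestor-level separator curves cross each edge of~$G^*$ at most once (by the recursive guarantee). Each curve in~$\Gamma$ is a simple cycle in the planar dual of the restricted~$G^*_{|S'}$, hence crosses each piece of the restricted~$G^*$ at most once, and therefore crosses each original edge of~$G^*$ at most once---comfortably within the claimed $O(\log b)$ bound. The main obstacle will be ensuring that the weighted Miller separator theorem applies cleanly to~$D$, which may fail to be $2$-connected or to have triangular faces near the boundaries of~$S$; these issues should be handled by adding auxiliary phantom edges inside boundary-incident faces to triangulate them, without affecting the crossings of~$\gamma_0$ with~$G^*$.
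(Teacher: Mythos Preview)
Your argument targets the per-curve bound (each closed curve of~$\Gamma$ crosses each edge of~$G^*$ at most $O(\log b)$ times), and indeed your recursion delivers the stronger~$O(1)$. But what the paper actually proves---and what is required in the proof of Theorem~\ref{T:genuszero}, where the proposition is invoked to obtain a pants decomposition of \emph{total} length $O(|E(G^*)|\log b)$---is that each edge of~$G^*$ is crossed by the \emph{entire family}~$\Gamma$ at most $O(\log b)$ times. Your divide-and-conquer does not yield this. When the balanced separator~$\gamma_0$ crosses an edge~$e$ of~$G^*$, it sends one piece of~$e$ into each child surface; by induction, at depth~$k$ the edge~$e$ may live as a piece in up to~$2^k$ of the sub-surfaces, and the separator produced in each of them may cross its piece once. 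Summed over the $O(\log b)$ levels this gives only an~$O(b)$ bound on the total number of crossings with~$e$, and nothing in your argument prevents this from being attained.

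The paper avoids this by not choosing the curves independently. It first fixes a single path~$p$, of multiplicity two in~$(S,G^*)$, visiting all~$b$ boundary components in a linear order (obtained by walking around a spanning tree of the boundaries). Every curve of~$\Gamma$ is then built from subpaths of~$p$ together with short arcs along the boundaries. The binary pairing of consecutive boundaries along~$p$ has depth~$O(\log b)$, and each position on~$p$ is reused by at most~$O(\log b)$ of the resulting curves (one per level of the pairing); since~$p$ itself crosses each edge of~$G^*$ at most twice, this is exactly what gives the~$O(\log b)$ total multiplicity. Your independently chosen Miller separators share no such common scaffold, so there is no mechanism controlling how crossings accumulate across levels.
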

\begin{figure}[htb]
\centering
\inkfrag{a}{(a)}\inkfrag{b}{(b)}\inkfrag{c}{(c)}\inkfrag{d}{(d)}
  \begin{inkfragenv}\def\svgwidth{.9\linewidth}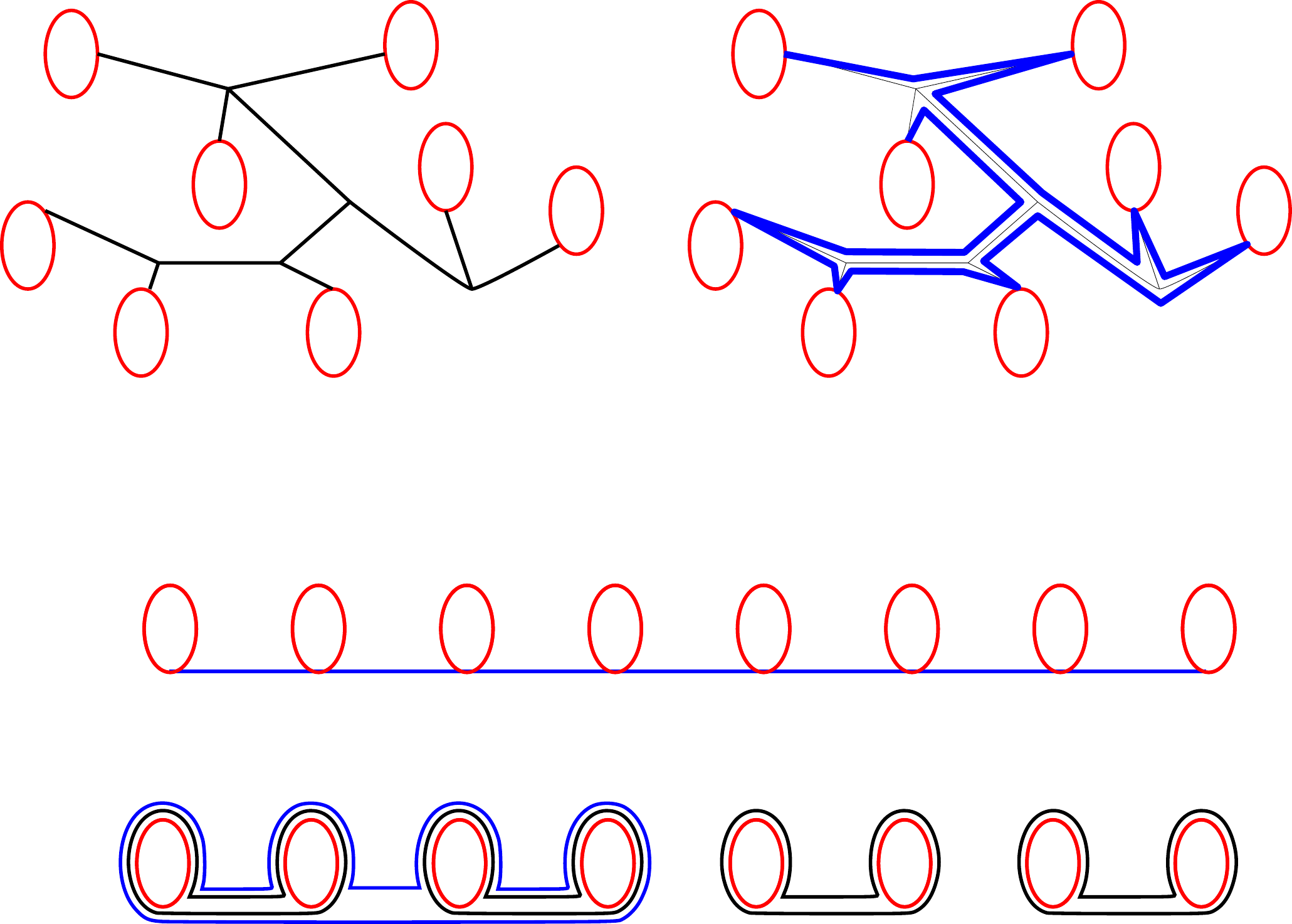\end{inkfragenv}%

\caption{The construction of the pants decomposition in
  Proposition~\ref{P:genuszero}.  (a) The tree~$T$.  (b) The path~$p$.  (c)
  An isomorphic drawing of~$p$.  (d) The pants decomposition.}
\label{F:genuszero}
\end{figure}
\begin{proof}
  Define the \emph{multiplicity} of a set of curves on~$(S,G^*)$ to be the
  maximum number of crossings between an edge of~$G^*$ and the set of curves.

  Let $T$ be a spanning tree of the boundary components of $(S,G^*)$, that
  is, a tree of multiplicity one in $(S,G^*)$ so that each boundary
  component of~$S$ is intersected by exactly one leaf of the tree,
  (Figure~\ref{F:genuszero}(a)). Draw a path~$p$ following the tree~$T$,
  touching it only at the leaves (Figure~\ref{F:genuszero}(b--c)); such a
  path~$p$ has multiplicity two, and touches each boundary component
  exactly once. Let $B_1,B_2,\ldots,B_b$ be the boundary components in
  order along $p$ (oriented arbitrarily).

  Now, we build the pants decomposition (Figure~\ref{F:genuszero}(d)). First we 
  group the boundary components by pairs, $\{B_1,B_2\}$, $\{B_3,B_4\}$,
  and so on. Then we cut~$S$ into a collection of $\lfloor{b/2}\rfloor$ pairs
  of pants and a genus zero surface with $\lceil{b/2}\rceil$ boundary
  components, and we reiterate the process on the latter surface.  After
  $O(\log b)$~iterations, the remaining surface has at most three boundary
  components, so we have built a pants decomposition~$\Gamma$.

  We claim that $\Gamma$ has multiplicity $O(\log b)$.  Indeed, each closed
  curve of~$\Gamma$ is made of (1) pieces that go around a boundary
  component, and (2) pieces that follow a subpath of~$p$.  The pieces of
  type~(1) have overall multiplicity $O(\log b)$, because $O(\log b)$
  pieces go around a given boundary component and each edge of~$G^*$ is
  incident to at most two boundary components.  The pieces of type~(2) have
  overall multiplicity $O(\log b)$, since $O(\log b)$ pieces run along a
  given subpath of~$p$, and because $p$ has multiplicity two in~$(S,G^*)$.
  The result follows.
\end{proof}
\begin{proof}[Proof of Theorem~\ref{T:genuszero}]
  Consider a random cross-metric surface $(S,G^*)$ with $n$~vertices; let
  $g$ be its genus.
  \begin{itemize}
  \item It may be that $(S,G^*)$ has genus zero or one; but this happens
    with probability arbitrarily close to zero, provided $n$ is large
    enough (this follows by combining Lemma~\ref{L:count-connected} with
    Guth et al.~\cite[Lemma~4]{gpy-pdrs-11});
  \item otherwise, if $(S,G^*)$ admits a genus zero decomposition~$\Gamma'$
    of length at most $n^{7/6-\varepsilon}$, we cut $(S,G^*)$
    along~$\Gamma'$, obtaining a cross-metric surface with genus zero with
    $2g\ge3$ boundary components and $O(n^{7/6-\varepsilon})$ edges.
    Proposition~\ref{P:genuszero} implies that this new cross-metric
    surface has a pants decomposition~$\Gamma$ with length
    $O(n^{7/6-\varepsilon}\log g)=O(n^{7/6-\varepsilon}\log n)$.  The union
    of $\Gamma$ and~$\Gamma'$ is a pants decomposition of $(S,G^*)$ of
    length at most $O(n^{7/6-\varepsilon}\log
    n+n^{7/6-\varepsilon})=O(n^{7/6-\varepsilon'})$ for some
    $\varepsilon'<\varepsilon$ if $n$ is large enough.  By
    Theorem~\ref{T:gpy} above, we conclude that this happens with
    arbitrarily small probability as $n\to\infty$.\qedhere
  \end{itemize}
\end{proof}
\end{document}